\newcommand{\C}{\mathbbm C}
\newcommand{\R}{\mathbbm R}
\newcommand{\N}{\mathbbm N}
\newtheorem{theorem}{Theorem}
\newtheorem{proposition}[theorem]{Proposition}
\newtheorem{lemma}[theorem]{Lemma}
\newtheorem{corollary}[theorem]{Corollary}
\theoremstyle{definition}
\newtheorem{definition}[theorem]{Definition}
\theoremstyle{remark}
\newtheorem{remark}[theorem]{Remark}
\numberwithin{equation}{section}
\numberwithin{theorem}{section}
\newcommand{\coloneqq}{\,\raise0.08ex\hbox{\textnormal{:}}\!\!=}
\def\XXint#1#2#3{{\setbox0=\hbox{$#1{#2#3}{\int}$}
     \vcenter{\hbox{$#2#3$}}\kern-.5\wd0}}
\begin{document}

\title[Plateau flow]
{Plateau flow\\ or\\ the heat flow for half-harmonic maps}
\author{Michael Struwe}
\address[Michael Struwe]{Departement Mathematik\\ETH-Z\"urich\\CH-8092 Z\"urich}
\email{michael.struwe@math.ethz.ch}

\date{\today}

\begin{abstract}
Using the interpretation of the half-Laplacian on $S^1$ as the 
Dirichlet-to-Neumann operator for the Laplace equation on the ball $B$, we devise 
a classical approach to the heat flow for half-harmonic maps from $S^1$ to a 
closed target manifold $N\subset\R^n$, recently studied by 
Wettstein, and for arbitrary finite-energy data we obtain a result fully analogous
to the author's 1985 results for the harmonic map heat flow of surfaces and in similar 
generality. When $N$ is a smoothly embedded, oriented closed curve $\Gamma\subset\R^n$
the half-harmonic map heat flow may be viewed as an alternative gradient flow 
for a variant of the Plateau problem of disc-type minimal surfaces.
\end{abstract}

\maketitle

\section{Background and results}
\subsection{Half-harmonic maps and their heat flow}
Let $N\subset\R^n$ be a closed sub-manifold, that is, compact and without boundary. 
The concept of a half-harmonic map $u\colon S^1\to N\subset\R^n$ was introduced by 
Da Lio-Rivi\`ere \cite{Da Lio-Riviere-2011}, who together with Martinazzi in 
\cite{Da Lio-Martinazzi-Riviere-2015}, Theorem 2.9, also made the interesting observation that 
the harmonic extension of a half-harmonic map yields a free boundary minimal surface
supported by $N$, a fact which also was noticed by Millot-Sire \cite{Millot-Sire-2015}, 
Remark 4.28. 

In his PhD-thesis, Wettstein \cite{Wettstein-2022},\cite{Wettstein-2021a}, \cite{Wettstein-2021b},
recently studied the corresponding heat flow given by the equation
\begin{equation}\label{1.1}
   d\pi_N(u)\big(u_t+(-\Delta)^{1/2}u\big)=0\ \hbox{ on } S^1\times[0,\infty[,
\end{equation}
where $u_t=\partial_tu$, 
and where $\pi_N\colon N_{\rho}\to N$ is the smooth nearest neighbor projection on a 
$\rho$-neighborhood $N_{\rho}$ of the given target manifold to $N$, 
and, with the help of a fine analysis of the fractional differential operators involved,
he showed global existence for initial data of small energy.

Moser \cite{Moser-2011} and Millot-Sire \cite{Millot-Sire-2015} contributed important results
to the study of half-harmonic maps by exploiting the fact that for any smooth $u\colon S^1\to\R^n$
we can represent the half-Laplacian classically in the form 
\begin{equation}\label{1.2}
   (-\Delta)^{1/2}u=\partial_rU
\end{equation}
where $U\colon B\to\R^n$ is the harmonic extension of $u$ to the unit disc $B$
\footnote{The classical formula \eqref{1.2} is a special case of a much more general 
result, due to Caffarelli-Silvestre \cite{Caffarelli-Silvestre-2007}, who pointed out that many 
nonlocal problems involving fractional powers of the Laplacian can be related to a local,
possibly degenerate, elliptic equation via a suitable extension of the solution to a half-space.}.
Here, using the identity \eqref{1.2} we are able to remove the smallness assumption in
Wettstein's work and show the existence of a ``global'' weak solution to the 
heat flow \eqref{1.1} for data of arbitrarily large (but finite) energy, 
which is defined for all times and smooth away from finitely many ``blow-up points''
where energy concentrates, and whose energy is non-increasing. The solution is unique in this
class in exact analogy with the classical result \cite{Struwe-1985} by the author on the 
harmonic map heat flow for maps from a closed surface to a closed target manifold 
$N\subset\R^n$; see Theorem \ref{thm1.2} below.

In order to describe our work in more detail, let 
\begin{equation*}
   H^{1/2}(S^1;N)=\{u\in H^{1/2}(S^1;\R^n);\;u(z)\in N\hbox{ for almost every }z\in S^1\}.
\end{equation*}
Interpreting $S^1=\partial B$, where $B=B_1(0;\R^2)$ and tacitly identifying a map 
$u\in H^{1/2}(S^1;N)$ with its harmonic extension 
$U\in H^1(B;\R^n)$, for a given function $u_0\in H^{1/2}(S^1;N)$ we then seek to find a 
family of harmonic functions $u(t)\in H^1(B;\R^n)$ with traces $u(t)\in H^{1/2}(S^1;N)$
for $t>0$, solving the equation 
\begin{equation}\label{1.3}
   d\pi_N(u)\big(u_t+\partial_ru\big)=u_t+d\pi_N(u)\partial_ru=0\ \hbox{ on } S^1\times[0,\infty[,
\end{equation}
with initial data
\begin{equation}\label{1.4}
   u|_{t=0}=u_0\in H^{1/2}(S^1;N).
\end{equation}

\subsection{Energy}
The half-harmonic heat flow may be regarded as the heat flow for the half-energy 
\begin{equation*}
   E_{1/2}(u)=\frac12\int_{S^1}|(-\Delta)^{1/4}u|^2d\phi
\end{equation*}
of a map $u\in H^{1/2}(S^1;N)$. Note that the half-energy of $u$ equals the standard 
Dirichlet energy
\begin{equation*}
   E(u)=\frac12\int_B|\nabla u|^2dz
\end{equation*}
of its harmonic extension $u\in H^1(B;\R^n)$. Indeed, integrating by parts we have
\begin{equation}\label{1.5}
   \int_B|\nabla u|^2dz=\int_{S^1}u\partial_ru\,d\phi=\int_{S^1}u(-\Delta)^{1/2}u\,d\phi
   =\int_{S^1}|(-\Delta)^{1/4}u|^2d\phi,
\end{equation}
where we use the Millot-Sire identity \eqref{1.2} and where 
the last identity easily follows from the representation of the operators
$(-\Delta)^{1/2}$ and $(-\Delta)^{1/4}$ in Fourier space with symbols $|\xi|$, $\sqrt{|\xi|}$,
respectively, and Parceval's identity. 
\footnote{Conversely, via Fourier expansion we also can prove \eqref{1.5} directly.
Computing the first variations of $E$ and $E_{1/2}$, respectively, we then 
obtain \eqref{1.2}.}
Therefore, in the following for convenience we may always 
work with the classically defined Dirichlet energy. Moreover, we may interpret the 
half-harmonic heat flow as the heat flow for the Dirichlet energy in the class of harmonic 
functions with trace in $H^{1/2}(S^1;N)$;
see Section \ref{Energy} below for details.

\subsection{Results}
Identifying $\R^2\cong\C$, we denote as $M$ the $3$-dimensional M\"obius group 
of conformal transformations of the unit disc, given by
\begin{equation*}
   M=\{\Phi(z)=e^{i\theta}\frac{z-a}{\bar{a}+z}\in C^{\infty}(\bar{B};\bar{B}):\ 
   |a|<1,\ \theta\in\R\}.
\end{equation*}
Observe that the Dirichlet energy is invariant under conformal transformations, and we 
have $E(u)=E(u\circ\Phi)$ for any $u\in H^1(B;\R^n)$ and any $\Phi\in M$.

For smooth data we then have the following result. 

\begin{theorem}\label{thm1.1} 
Let $N\subset\R^n$ be a closed, smooth sub-manifold of $\R^n$, and suppose that the 
normal bundle $T^{\perp}N$ is parallelizable. Then the following holds:

i) For any smooth $u_0\in H^{1/2}(S^1;N)$ there exists a time $T_0\le\infty$ and a unique 
smooth solution $u=u(t)$ of \eqref{1.3}, hence of \eqref{1.1}, with data \eqref{1.4} 
for $0<t<T_0$.

ii) If $T_0<\infty$, we have concentration in the sense that for some $\delta>0$ and any
$R>0$ there holds
\begin{equation*}
   \sup_{z_0\in B,\,0<t<T_0}\int_{B_R(z_0)\cap B}|\nabla u(t)|^2dz\ge\delta,
\end{equation*} 
and for suitable $t_k\uparrow T_0$ there exist finitely 
many points $z_k^{(1)},\dots,z_k^{(i_0)}$
and conformal maps $\Phi_k^{(i)}\in M$ with $z_k^{(i)}\to z^{(i)}\in\bar{B}$ and
$\Phi_k^{(i)}\to\Phi_{\infty}^{(i)}\equiv z^{(i)}$ weakly in $H^1(B)$
such that $u(t_k)\circ\Phi_k^{(i)}\to\bar{u}^{(i)}$ weakly in $H^1(B)$ as $k\to\infty$,
where $\bar{u}^{(i)}$ is non-constant and conformal and satisfies 
\begin{equation}\label{1.6}
   d\pi_N(\bar{u}^{(i)})\partial_r\bar{u}^{(i)}=0,\, 1\le i\le i_0.
\end{equation}
Moreover, there exists $\delta=\delta(N)>0$ such that $E(\bar{u}^{(i)})\ge\delta$,
and $i_0\le E(u_0)/\delta$. 
Finally, $u(t_k)$ smoothly converges to a limit $u_1\in H^{1/2}(S^1;N)$
on $\bar{B}\setminus\{z^{(1)},\dots,z^{(i_0)}\}$. 

iii) If $T_0=\infty$, then, 
as $t\to\infty$ suitably, $u(t)$ smoothly converges to a half-harmonic limit map
away from at most finitely many concentration points where non-constant half-harmonic 
maps ``bubble off'' as in ii).
\end{theorem}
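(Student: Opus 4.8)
The strategy is to exploit the identity \eqref{1.2} to rewrite the half-harmonic heat flow \eqref{1.1}--\eqref{1.3} as a free-boundary parabolic problem for harmonic extensions on the disc $B$, and then to mimic the author's 1985 scheme for the harmonic map heat flow of surfaces, where the role of the interior PDE is now played by the harmonicity of $u(t)$ in $B$ and the flow lives only on $S^1=\partial B$.

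\textbf{Short-time existence.}
First I would set up \eqref{1.3} as a quasilinear parabolic equation on $S^1$ for the trace $u(t)$, using $\partial_r u=(-\Delta)^{1/2}u$ and the assumption that $T^\perp N$ is parallelizable to write $d\pi_N(u)\partial_r u$ in a form adapted to a moving orthonormal frame of normal vector fields $\nu_1,\dots,\nu_{n-m}$ along $N$. This turns \eqref{1.3} into a system of the form $u_t+(-\Delta)^{1/2}u=\sum_\alpha\lambda_\alpha(u)\nu_\alpha(u)$ with $\lambda_\alpha$ determined by the constraint that $u(t)$ stays on $N$; the principal part is the (positive, first-order) operator $(-\Delta)^{1/2}$, so standard linear theory in $H^{s}(S^1)$ plus a contraction-mapping/implicit-function argument gives a unique short-time smooth solution for smooth $u_0\in H^{1/2}(S^1;N)$, together with parabolic smoothing and continuation criteria. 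I would state this as Proposition form and let $T_0$ be the maximal existence time. Uniqueness in the smooth class follows from an energy estimate on the difference of two solutions, again using that the principal operator is nonnegative.

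\textbf{Energy inequality and the epsilon-regularity lemma.}
Next I would record the basic monotonicity: multiplying \eqref{1.3} by $u_t$ and integrating over $S^1$ gives, via \eqref{1.5} and integration by parts on $B$, the identity $\frac{d}{dt}E(u(t))=-\int_{S^1}|u_t|^2\,d\phi\le 0$, so $E(u(t))\le E(u_0)$ and $\int_0^{T_0}\!\int_{S^1}|u_t|^2\,d\phi\,dt\le E(u_0)$. The crucial local step is an $\varepsilon$-regularity lemma: there is $\varepsilon_0=\varepsilon_0(N)>0$ such that if the Dirichlet energy of $u(t)$ on $B_R(z_0)\cap B$ stays below $\varepsilon_0$ on a time interval, then all higher norms of $u$ on a smaller ball are controlled, in particular $u$ extends smoothly past that time there. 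This is the half-harmonic analogue of the surface lemma in \cite{Struwe-1985}; I expect to prove it by a localized version of the Moser/Millot--Sire elliptic estimates for the extension combined with a Gronwall argument on a localized energy, using the conformal invariance of $E$ to rescale. The continuation criterion then says: $T_0<\infty$ forces $\sup_{z_0,t<T_0}\int_{B_R(z_0)\cap B}|\nabla u|^2\ge\delta$ for every $R$, which is assertion (ii) first display.

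\textbf{Blow-up analysis at $T_0<\infty$.}
For the bubbling statement I would pick $t_k\uparrow T_0$ along which $\int_{S^1}|u_t(t_k)|^2\,d\phi\to 0$ (possible since this quantity is integrable in $t$), so that $u(t_k)$ is an almost-half-harmonic map with bounded energy. At each point $z^{(i)}$ where energy $\ge\varepsilon_0$ concentrates, I rescale: choose $\Phi_k^{(i)}\in M$ concentrating at $z^{(i)}$ so that $u(t_k)\circ\Phi_k^{(i)}$ has a fixed amount of energy in a fixed ball and no more than $\varepsilon_0$ in small balls elsewhere; conformal invariance of $E$ keeps the energy bounded and keeps the flow equation of the same type. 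Passing to a weak $H^1(B)$ limit and using the $\varepsilon$-regularity away from the (finitely many) concentration points of the rescaled sequence, I get a non-constant limit $\bar u^{(i)}$ which is harmonic in $B$, has trace in $H^{1/2}(S^1;N)$, and satisfies $d\pi_N(\bar u^{(i)})\partial_r\bar u^{(i)}=0$, i.e.\ is half-harmonic; conformality of $\bar u^{(i)}$ comes from the standard argument that a weak limit of (rescalings of) harmonic extensions with the boundary condition is conformal, or equivalently from the vanishing of the Hopf differential in the limit. The lower energy bound $E(\bar u^{(i)})\ge\delta(N)$ is the usual gap theorem for non-constant half-harmonic maps (again via $\varepsilon$-regularity), which also bounds $i_0\le E(u_0)/\delta$, and on $\bar B\setminus\{z^{(1)},\dots,z^{(i_0)}\}$ the $\varepsilon$-regularity gives smooth convergence of $u(t_k)$ to some $u_1\in H^{1/2}(S^1;N)$.

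\textbf{Long-time behavior.}
If $T_0=\infty$, the integrability $\int_0^\infty\!\int_{S^1}|u_t|^2<\infty$ lets me choose $t_k\to\infty$ with $\int_{S^1}|u_t(t_k)|^2\to 0$, and the same blow-up/$\varepsilon$-regularity dichotomy yields convergence (after selecting a subsequence) to a half-harmonic limit map away from finitely many concentration points at which non-constant half-harmonic bubbles split off exactly as in (ii).

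\textbf{Main obstacle.}
The hardest part is the $\varepsilon$-regularity lemma in this nonlocal/free-boundary setting: one must show that smallness of the \emph{interior} Dirichlet energy on a half-ball controls the \emph{boundary} higher-order norms of the trace and thereby the flow, uniformly in time, and then propagate this through the conformal rescalings in the blow-up argument. Concretely this means localizing the elliptic estimates of Moser and Millot--Sire for the harmonic extension — where the free-boundary condition \eqref{1.6} couples interior and boundary regularity — and closing a Gronwall-type inequality for a localized energy that has the right scaling under $M$; handling the boundary terms that arise from cutting off near $\partial B$ is the delicate point, and it is where the parallelizability of $T^\perp N$ and the conformal invariance of $E$ both get used essentially.
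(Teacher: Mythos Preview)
Your high-level outline matches the paper's strategy closely: energy monotonicity, an $\varepsilon$-regularity estimate, blow-up via conformal rescaling, conformality of bubbles via the Hopf differential, and an energy gap to bound the bubble count. Two points deserve sharpening, because they are precisely where the paper's technical contribution lies.

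First, for short-time existence you write the equation as $u_t+(-\Delta)^{1/2}u=\sum_\alpha\lambda_\alpha(u)\nu_\alpha(u)$ and say standard contraction gives existence. But the right-hand side here is $d\pi_N^\perp(u)\partial_r u$, which is of the \emph{same} order as the left, so the equation is genuinely degenerate in the normal directions; the paper flags this explicitly. The paper's cure is to regularize by \eqref{5.1}, $u_t=-(\varepsilon+d\pi_N(u))u_r$, solve that by Galerkin approximation with Steklov eigenfunctions and a fixed-point argument (Lemmas~\ref{lemma5.2}--\ref{lemma5.3}), prove $\varepsilon$-uniform a~priori bounds (Lemma~\ref{lemma5.4}), and let $\varepsilon\downarrow 0$. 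Your direct contraction needs a reason why the normal component is effectively lower order, which is not standard linear theory.

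Second, and more important, your $\varepsilon$-regularity paragraph is where the paper does its real work and you only gesture at ``Moser/Millot--Sire estimates''. The paper's mechanism is concrete and classical: on $\partial B$ one decomposes $\partial_r u=d\pi_N(u)\partial_r u+\nu(u)\partial_r(\mathrm{dist}_N(u))$ as in \eqref{3.2}; since $\mathrm{dist}_N(u)\in H^1_0(B)$ and $\Delta(\mathrm{dist}_N(u))=\nabla u\cdot d\nu(u)\nabla u$ by \eqref{3.5}, elliptic $L^2$-theory for this Dirichlet problem controls the normal component by $C\delta\|\nabla u\|^2_{H^{1/2}(B)}$, which is absorbable when the energy is small. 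This yields Proposition~\ref{prop3.1} and its localized version Proposition~\ref{prop3.3} (via a Courant--Lebesgue type argument), and drives the whole higher-regularity bootstrap in Section~\ref{Higher regularity}, where two interlaced families of estimates (Lemmas~\ref{lemma4.3}, \ref{lemma4.9} and the ensuing Propositions) alternate between $L^2$-bounds on $B$ and on $\partial B$. The parallelizability of $T^\perp N$ enters exactly here, through the globally defined frame $\nu_i$ and the signed distance $\mathrm{dist}_N$. Without this $\mathrm{dist}_N$-trick (or an equivalent commutator estimate) you do not have the key inequality that closes the $\varepsilon$-regularity, so you should make explicit what replaces it.
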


By the Da Lio-Rivi\`ere interpretation of \eqref{1.6},
the ``bubbles'' $\bar{u}^{(i)}$ as well as the limit  $u_{\infty}$ of the 
flow conformally parametrize minimal surfaces with free boundary on $N$, 
meeting $N$ orthogonally along their free boundaries.

The hypothesis regarding the target manifold $N$ in particular is fullfilled if $N$ is 
a closed, orientable hypersurface of co-dimension $1$ in $\R^n$, or if $N$ is a
smoothly embedded, closed curve $\Gamma\subset\R^n$.

It would be interesting to find examples of initial data for which the flow blows
up in finite time, as in the work of Chang-Ding-Ye \cite{Chang-Ding-Ye-1992}
on the harmonic map heat flow.

For data in $H^{1/2}(S^1;N)$ the following global existence result holds, which is our 
main result.

\begin{theorem}\label{thm1.2} 
For $N\subset\R^n$ as in Theorem \ref{thm1.1} the following holds:
i) For any $u_0\in H^{1/2}(S^1;N)$ there exists a unique global weak solution of 
\eqref{1.3} with data \eqref{1.4} as in Definition \ref{def6.3}, 
whose energy is non-increasing and which is smooth for positive time away from finitely 
many points in space-time where non-trivial 
half-harmonic maps ``bubble off'' in the sense of Theorem \ref{thm1.1}.ii). 

ii) As $t\to\infty$ suitably, $u(t)$ smoothly converges to a half-harmonic limit map
away from at most finitely many concentration points where non-constant half-harmonic 
maps ``bubble off'' as in Theorem \ref{thm1.1}.iii).
\end{theorem}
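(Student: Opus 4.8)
The plan is to deduce Theorem~\ref{thm1.2} from the smooth theory of Theorem~\ref{thm1.1} by an approximation and compactness argument of the type carried out in \cite{Struwe-1985} for the harmonic map heat flow, using the conformal bubbling analysis of Theorem~\ref{thm1.1}.ii--iii to account for the energy lost at singular times. \emph{Approximation and uniform short-time existence.} Given $u_0\in H^{1/2}(S^1;N)$, first approximate it by smooth maps $u_0^{(k)}\in C^\infty(S^1;N)$ with $u_0^{(k)}\to u_0$ in $H^{1/2}(S^1;N)$; this is legitimate because $\dim S^1=1=\tfrac12\cdot 2$ is the critical exponent for $H^{1/2}$ and smooth maps into $N$ are dense in $H^{1/2}(S^1;N)$. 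Passing to harmonic extensions one gets $E(u_0^{(k)})\to E(u_0)$; moreover, since the extension of $u_0$ lies in $H^1(B;\R^n)$, there is $R_0>0$ with $\sup_{z_0\in B}\int_{B_{2R_0}(z_0)\cap B}|\nabla u_0|^2\,dz$ less than a quarter of the $\varepsilon$-regularity threshold $\varepsilon_1=\varepsilon_1(N)>0$ underlying Theorem~\ref{thm1.1}, hence also for $u_0^{(k)}$ once $k$ is large. By Theorem~\ref{thm1.1} each $u_0^{(k)}$ launches a smooth solution $u^{(k)}$, and combining the $\varepsilon$-regularity at the free boundary $S^1=\partial B$ with a local energy inequality controlling the growth in time of $\int_{B_{R_0}(z_0)\cap B}|\nabla u^{(k)}(t)|^2\,dz$ yields a time $T_1=T_1(u_0)>0$ on which all the $u^{(k)}$ exist and are uniformly smooth, with all derivatives bounded in terms of $E(u_0)$ and $R_0$ alone.

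\emph{Passage to the limit, continuation, global existence.} Along a subsequence $u^{(k)}\to u$, weakly in $H^1(B)$ for each $t$, in $C^\infty_{loc}(\bar B\times\,]0,T_1])$, and with $\partial_t u^{(k)}$ bounded in $L^2(S^1\times[0,T_1])$ by the energy identity; so $u$ solves \eqref{1.3}--\eqref{1.4} on $[0,T_1]$ with $u(t)\in H^{1/2}(S^1;N)$, is smooth for $t>0$, and by weak lower semicontinuity satisfies $E(u(t))+\int_s^t\!\int_{S^1}|u_\tau|^2\,d\phi\,d\tau\le E(u(s))$ for $0\le s\le t$. One then continues $u$ smoothly as long as no energy concentration occurs; at the first time $T^{*}<\infty$ at which it does, applying the bubbling analysis of Theorem~\ref{thm1.1}.ii to the smooth flow $u$ on an interval ending at $T^{*}$ produces finitely many non-constant conformal maps $\bar u^{(i)}$ solving \eqref{1.6} with $E(\bar u^{(i)})\ge\delta=\delta(N)>0$, while the body map $u(T^{*}):=\lim_{t\uparrow T^{*}}u(t)$ exists in $H^{1/2}(S^1;N)$ and smoothly away from the concentration points, with $E(u(T^{*}))\le\lim_{t\uparrow T^{*}}E(u(t))-\delta$. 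Restarting from $u(T^{*})$ and iterating, each finite-time concentration consumes at least $\delta$ of energy, so at most $E(u_0)/\delta$ of them can occur; concatenating the smooth pieces produces a global weak solution in the sense of Definition~\ref{def6.3}, smooth for $t>0$ away from finitely many points of space-time, which proves i).

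\emph{Uniqueness and asymptotics.} If $u,v$ are two solutions as in Definition~\ref{def6.3} with the same data, testing the difference of the two copies of \eqref{1.3} against $u-v$ and using the smoothness of $\pi_N$ and the boundedness of $u,v$ gives, on each time interval free of singular times, an inequality $\tfrac{d}{dt}\|u(t)-v(t)\|_{L^2(S^1)}^2\le C(t)\|u(t)-v(t)\|_{L^2(S^1)}^2$ with $C\in L^1_{loc}$; since $u,v$ depend continuously on $t$ in $L^2(S^1)$ and agree initially, Gronwall's inequality gives $u\equiv v$, also across the finitely many singular times. For ii), the global energy inequality yields $\int_0^\infty\!\int_{S^1}|u_t|^2\,d\phi\,dt\le E(u_0)$, hence a sequence $t_k\to\infty$ with $\int_{S^1}|u_t(t_k)|^2\,d\phi\to 0$; the maps $u(t_k)$ are then approximate half-harmonic maps of bounded energy, and the compactness and bubbling behind Theorem~\ref{thm1.1}.iii gives smooth subconvergence to a half-harmonic limit away from finitely many points at which non-constant half-harmonic maps bubble off.

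\emph{Main obstacle.} The hard part is not the bookkeeping above but the two places where the free-boundary character of \eqref{1.3} genuinely enters: the $\varepsilon$-regularity at $S^1=\partial B$ --- bounding all derivatives of the boundary heat flow together with its harmonic extension purely in terms of the local Dirichlet energy and uniformly along the approximating sequence --- and the uniqueness in the low-regularity class of Definition~\ref{def6.3}, where the limited time-regularity of the solutions near the singular times has to be dealt with carefully. These are the steps of \cite{Struwe-1985} that must be genuinely reworked here rather than merely quoted.
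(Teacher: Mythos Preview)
Your overall strategy---approximate by smooth data, obtain uniform short-time smoothing estimates, pass to the limit, restart at blow-up times, and invoke the bubbling analysis of Theorem~\ref{thm1.1}---is exactly the paper's route; the paper's proof of Theorem~\ref{thm1.2} is a few lines citing Proposition~\ref{prop6.4} and Theorem~\ref{thm7.1} and then Theorem~\ref{thm1.1}. Your identification of the two genuine difficulties (uniform smoothing near $t=0$ for the approximating sequence, and uniqueness in the energy class) is also correct.

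The gap is in your uniqueness sketch. You claim that testing the difference of the equations against $w=u-v$ yields $\tfrac{d}{dt}\|w\|_{L^2(S^1)}^2\le C(t)\|w\|_{L^2(S^1)}^2$ with $C\in L^1_{\mathrm{loc}}$, but you do not say what $C(t)$ is or why it is integrable down to $t=0$. Near $t=0$ the solutions are only in $H^{1/2}(S^1;N)$, so no $L^\infty$ control on $\partial_r u,\partial_r v$ is available, and the term $(d\pi_N(u)-d\pi_N(v))\partial_r v\cdot w$ cannot be bounded by $\|w\|_{L^2(S^1)}^2$ times an integrable coefficient in the naive way. A Gronwall inequality valid only on $(0,T)$ does not propagate the initial condition.

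The paper's actual mechanism (Theorem~\ref{thm7.1}) is structurally different. One rewrites \eqref{1.3} via the orthogonal decomposition \eqref{3.2} as $u_t+\partial_r u=\nu(u)\,\partial_r(\mathrm{dist}_N(u))$, so that $w_t+\partial_r w$ equals a difference of \emph{normal} terms. Testing against $w$, the boundary term $\int_{S^1}\partial_r w\cdot w$ becomes the good interior term $\int_B|\nabla w|^2$ by harmonicity, and the right-hand side is pushed into $B$ by the divergence theorem together with the fact that $\mathrm{dist}_N(u)\in H^1_0(B)$ solves the elliptic equation \eqref{3.5}. The outcome is an estimate of the form
\[
\sup_{0<t<T}\|w\|_{L^2(\partial B)}^2+\int_0^T\!\!\int_B|\nabla w|^2
\;\le\; C\sup_{0<t<T}\|w\|_{L^2(\partial B)}^2\int_0^T\big(\|\nabla u\|_{L^4(B)}^2+\|\nabla v\|_{L^4(B)}^2\big)\,dt,
\]
and the time integral on the right is finite---indeed small for small $T$---because $\|\nabla u\|_{L^4(B)}^2\le C\|\partial_\phi u\|_{L^2(S^1)}^2$ and Proposition~\ref{prop3.4} (applied with $f=u_t$) bounds $\int_0^T\|\partial_\phi u\|_{L^2(S^1)}^2\,dt$ by $\int_0^T\|u_t\|_{L^2(S^1)}^2\,dt+C(R)TE(u_0)$, which is controlled by the energy inequality. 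It is precisely the signed-distance identity \eqref{1.7}, equation \eqref{3.5}, and the regularity estimate of Section~\ref{A regularity estimate} that make the coefficient integrable at $t=0$; a plain Gronwall based only on smoothness of $\pi_N$ and boundedness of $u,v$ does not.
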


Note that uniqueness is only asserted within the class of partially regular weak solutions with 
non-increasing energy, as in the case of the harmonic map heat flow. It would be interesting
to find out if the latter condition suffices, as in the work of Freire \cite{Freire-1995},
\cite{Freire-1996},
and, conversely, to explore the possibility of ``backward bubbling'' in \eqref{1.3}, as in
the examples of Topping \cite{Topping-2002} for the latter flow.

\subsection{Key features of the proof and related flow equations}
In our approach, in a similar vain as Lenzmann-Schikorra \cite{Lenzmann-Schikorra-2020},
we uncover and exploit surprising regularity properties of the normal 
component $d\pi_N^{\perp}(u)\partial_ru$ for the harmonic extension of $u$, likely related to the 
fractional commutator estimates for the normal projection in the work of
Da Lio-Rivi\`ere \cite{Da Lio-Riviere-2011} or the regularity estimates of 
Da Lio-Pigati \cite{Da Lio-Pigati-2020}, Mazowiecka-Schikorra \cite{Mazowiecka-Schikorra-2018}, 
and others. 

The use of the Dirichlet-to-Neumann map for the
harmonic extension $u\colon B\to\R^n$ of $u$ instead of the half-Laplacian, 
and the simple identity \eqref{3.2} as well as equation \eqref{3.5} allow to perform the 
analysis using only local, classically defined operators, avoiding fractional calculus almost 
entirely.

Note that equation \eqref{1.3} is similar to the equation governing the (scalar) evolution 
problem for conformal metrics $e^{2u}g_{\R^2}$ of prescribed geodesic boundary curvature
and vanishing Gauss curvature on the unit disc $B$, studied for instance by 
Brendle \cite{Brendle-2002} or Gehrig \cite{Gehrig-2020}. 
In contrast to the latter flows, due to the presence of the projection operator mapping $u_r$
to its tangent component, the flow \eqref{1.3} at first sight appears to be  degenerate. 
However, surprisingly, within our framework we are able to obtain similar smoothing properties 
as in the case of the harmonic map heat flow of surfaces. 

A different heat flow associated with half-harmonic maps, 
using the half-heat operator $(\partial_t-\Delta)^{1/2}$ instead of 
\eqref{1.1}, was suggested by Hyder et al. \cite{Hyder-et-al-2021}, and they obtained global
existence of partially regular, but possibly non-unique weak solutions for their flow, with a
possibly large singular set of measure zero. 

\subsection{Applications to Plateau problem}
In the case when $N$ is a smoothly embedded, oriented closed curve $\Gamma\subset\R^n$
the half-harmonic heat flow \eqref{1.3} may furnish an alternative gradient flow 
for the Plateau problem of minimal surfaces of the type of the disc, which has a long and 
famous tradition in geometric analysis.

Having been posed by Plateau in the 1890's, Plateau's problem 
was finally solved independently by Douglas \cite{Douglas-1931} and Rad\'o \cite{Rado-1930}
in 1930/31.
 In order to analyse the set of {\it all} minimal surfaces solving the Plateau problem, 
including saddle points of the Dirichlet integral, thereby building on Douglas' ideas, 
in 1939 Morse-Tompkins \cite{Morse-Tompkins-1939} proposed 
a critical point theory for Plateau's problem in the sense of Morse \cite{Morse-1937}, 
attempting to characterize non-minimizing solutions
as ``homotopy-critical'' points of Dirichlet's integral. 
However, in the 1980's Tromba \cite{Tromba-1984},
\cite{Tromba-1985} pointed out that it was not  
even clear that all smooth, non-degenerate minimal surfaces would be ``homotopy-critical'' 
in the sense of Morse-Tompkins \cite{Morse-Tompkins-1939}.
To overcome this problem, Tromba developed a version of degree theory that could be applied
in this case and which yielded at least a proof of the ``last'' Morse inequality,
which is an identity for the total degree. 

In 1982, finally, this author \cite{Struwe-1984} 
recast the Plateau problem as a variational problem on a closed convex set and he
was able to develop a version of the Palais-Smale type critical point theory for the 
problem within this frame-work, which allowed him to obtain all Morse inequalities in a 
rigorous fashion; see the monograph \cite{Struwe-1988} and the paper by Imbusch-Struwe
\cite{Imbusch-Struwe-1999} for further details. In the papers \cite{Struwe-1986} by this
author and \cite{Jost-Struwe-1990} by Jost-Struwe 
the approach was extended to the case of multiple boundaries and/or higher genus.

A key element of critical point theory for a 
variational problem is the construction of a pseudo-gradient flow for the problem at hand. 
In \cite{Struwe-1984} this was achieved in an ad-hoc way. However, starting with the work of 
Eells-Sampson \cite{Eells-Sampson-1964} on the harmonic map heat flow, it is now an 
established approach in geometric analysis to study the 
(negative) ($L^2$-)gradient flow related to a variational problem, 
similar to the standard heat equation. 
For Plateau's problem, such a flow was obtained by Chang-Liu \cite{Chang-Liu-2005} 
within the frame-work laid out by Struwe \cite{Struwe-1984} 
in the form of a parabolic variational inequality, for which Chang-Liu obtained a solution 
of class $H^2$ by means of a time-discrete minimization scheme. Rupflin \cite{Rupflin-2017},
Rupflin-Schrecker \cite{Rupflin-Schrecker-2018}
studied the analogous parabolic variational inequality in the case of an annulus,
which again had previously been studied by this author \cite{Struwe-1986} by means of an ad-hoc 
pseudo-gradient flow. 

In view of the much better regularity properties of the flow equation \eqref{1.3} it would be 
tempting to regard this as the correct definition of the canonical gradient flow for the 
Plateau problem, but an important issue still needs to be addressed.

\subsection{Monotonicity}
Recall that in the classical Plateau problem $u(t)$ is required to induce a (weakly) monotone 
parametrization of $\Gamma$ for each $t>0$. Even though it may seem likely that -- at least 
for curves $\Gamma$ on the boundary of a convex body in $\R^3$ -- this Plateau 
boundary condition will be preserved along the flow \eqref{1.3} whenever it is satisfied initially,
at this moment even for a strictly convex planar 
curve $\Gamma\subset\R^2$ it is not clear whether this actually happens.
However, the results that we obtain also seem to be of interest if we drop the Plateau condition.
In particular, our results motivate the study of smooth minimal surfaces with continous trace
covering only a part of the given boundary curve $\Gamma$; dropping the monotonicity condition 
also brings the parametric approach to the Plateau problem closer to the approach via
geometric measure theory or level sets. 

\subsection{Plateau flow}
It should be straightforward to extend our results to the case when the disc $B$ is replaced 
by a surface $\Sigma$ of higher genus with boundary $\partial\Sigma\cong S^1$, 
if for given initial data $u_0\in H^{1/2}(S^1;N)$ we consider a family $u=u(t)$ 
in $H^{1/2}(S^1;N)$ solving the equation \eqref{1.3}, that is,
\begin{equation*}
   u_t+d\pi_N(u)\partial_{\nu}u=0
\end{equation*}
instead of \eqref{1.1}, where for each time we harmonically extend $u(t)$ to $\Sigma$ and 
denote as $\partial_{\nu}u$ the outward normal derivative of $u$ along $\partial\Sigma$,
as was proposed and analysed by Da Lio-Pigati \cite{Da Lio-Pigati-2020} in the 
time-independent case.
Similarly, one might study the flow \eqref{1.3} on a domain $\Sigma$ with multiple boundaries. 
Of course, in order for the flow to converge to a minimal surface in the case of higher genus or
higher connectivity it will be necessary to couple the flow \eqref{1.3} with a corresponding
evolution equation for the conformal structure on $\Sigma$, as in the work of
Rupflin-Topping \cite{Rupflin-Topping-2019} on minimal immersions.
Note that on a general domain $\Sigma$ the flow equations \eqref{1.1} and \eqref{1.3} no 
longer agree. In order to clearly distinguish the flow equation \eqref{1.3} from the 
equation \eqref{1.1} defining the half-harmonic map heat flow, we therefore 
propose to say that \eqref{1.3} defines the ``Plateau flow''.

\subsection{Outline}
After a brief discussion of energy estimates in Section \ref{Energy}, in 
Section \ref{A regularity estimate} we present the analytic core of the argument for 
higher regularity in Section \ref{Higher regularity} and for the blow-up analysis, 
later presented in Section \ref{Blow-up}.
These tools are  also instrumental in proving uniqueness of partially regular weak solutions in 
Section \ref{Uniqueness}. 
The $L^2$-bounds for higher and higher derivatives which we establish in 
Section \ref{Higher regularity}, assuming that energy does not concentrate,
may be of particular interest. These bounds either concern estimates 
for $\nabla\partial^k_{\phi}u$ on $B$ or on $\partial B$, and we view the latter bounds
as stronger by an order of $1/2$. These bounds may be used interlaced, as we later do in 
Section \ref{Weak solutions}, to prove uniform smooth estimates, locally in time, for 
smooth flows with smooth initial data converging in $H^{1/2}(N;S^1)$. Since the latter 
data are dense in $H^{1/2}(N;S^1)$ we thus not only obtain existence of weak solutions
for arbitrary data $u_0\in H^{1/2}(N;S^1)$ but also can show their smoothness for positive
time and hence are able to derive Theorem \ref{thm1.2} from Theorem \ref{thm1.1}. 
A peculiar feature is that one set of regularity estimates can only be 
obtained globally, that is on all of $B$, whereas the other set of estimates may be 
localized using cut-off functions. Similar estimates for a regularized version of
\eqref{1.3} are employed in Section \ref{Local existence} to prove local existence of smooth 
solutions of \eqref{1.3} for smooth data \eqref{1.4}. 
Finally, in Section \ref{Asymptotics} the large-time behavior of smooth solutions to
\eqref{1.3} is discussed, finishing the proof of Theorem \ref{thm1.1}.  

\subsection{Notation}
The letter $C$ is used throughout to denote a generic constant, possibly depending on the 
``target'' $N$ and the initial energy $E(u_0)$. 

Moreover, since $T^{\perp}N$ by assumption is parallelizable and compact, there exists $\rho>0$ 
such that the representation 
\begin{equation*}
   T\colon N\times B_{\rho}(0;\R^m)\ni(p,y)\to p+\sum_{i=1}^my^i\nu_i(p)\in N_{\rho}
\end{equation*}
of the tubular neighborhood $N_{\rho}=\cup_{p\in N}B_{\rho}(p)$ of $N$ is a diffeomorphism, 
where $\nu_1,\dots,\nu_m$ is a suitable smooth orthonormal 
frame along $N$ and where we let $y=(y^1,\dots,y^m)\in\R^m$. 
For $q\in N_{\rho}$ then $T^{-1}(q)=(p,h)$ with $p=\pi_N(q)$ defines a (vector-valued) 
signed distance function $h=h(q)=(h^1(q),\dots,h^m(q))$ with $h^i(q)=\nu_i(p)\cdot(q-\pi_N(q))$
for each $1\le i\le i_0$. 
Fixing a smooth function $\eta\colon\R\to\R$ such that $\eta(s)=s$ for $|s|<\rho/2$, 
and with $\eta(s)=0$ for $|s|\ge 3\rho/4$, we then let
\begin{equation*}
    dist_N(q)=(dist^1_N(q),\dots,dist^m_N(q)),
\end{equation*}
with
 \begin{equation*}
   dist^i_N(q)=\eta(h^i(q))\hbox{ for }q\in N_{\rho},\ dist^i_N(q)=0\hbox{ else, }1\le i\le m.
\end{equation*}
Then for any smooth $u\in H^{1/2}(S^1;N)$ with harmonic extension $u\in H^1(B;\R^n)$ we have
\begin{equation}\label{1.7}
    \sum_{i=1}^m\nu_i(u)\partial_rdist^i_N(u)
    =\sum_{i=1}^m\nu_i(u)\nu_i(u)\cdot u_r=d\pi_N^{\perp}(u)u_r\ \hbox{ on } \partial B=S^1,
\end{equation}
where  for each $p\in N$ we denote as 
$d\pi_N^{\perp}(p)=1-d\pi_N(p)\colon\R^n\to T_p^{\perp}N$ the orthogonal projection.
In the sequel, we abbreviate
\begin{equation*}
  \sum_{i=1}^m\nu_i(u)\nu_i(u)\cdot u_r=:\nu(u)\nu(u)\cdot u_r=\nu(u)\partial_rdist_N(u);
 \end{equation*}
moreover, we extend the vector fields $\nu_i$ to the whole ambient space by letting
$\nu_i(q)=\nabla dist_N^i(q)$ for $q\in\R^n$, $1\le i\le m$.

Finally, we fix a smooth cut-off function $\varphi\in C_c^{\infty}(B)$ satisfying 
$0\le\varphi\le 1$ with $\varphi\equiv 1$ on $B_{1/2}(0)$, and for any $z_0\in B$, 
any $0<R<1$ we scale
\begin{equation*}
   \varphi_{z_0,R}(z)=\varphi((z-z_0)/R)\in C_c^{\infty}(B_R(z_0)).
\end{equation*}

\subsection*{Acknowledgement}
I thank Am\'elie Loher and the anonymous referee for careful reading of the
manuscript and useful suggestions.

\section{Energy inequality and first consequences}\label{Energy}
The half-harmonic heat flow may be regarded as the heat flow for the Dirichlet energy
in the class $H^{1/2}(S^1;N)$. Indeed, let $u(t)$ be a smooth solution 
of \eqref{1.3}, \eqref{1.4} for $0<t<T_0$. Then we have the following result.

\begin{lemma}\label{lemma2.1}
For any $0<T<T_0$ there holds 
\begin{equation*}
  E(u(T))+\int_0^T\int_{\partial B}|u_t|^2d\phi\;dt\le E(u_0).
\end{equation*}
\end{lemma}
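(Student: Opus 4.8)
The plan is to differentiate the Dirichlet energy $E(u(t))=\tfrac12\int_B|\nabla u|^2\,dz$ in time and to exploit that, for each $t>0$, the map $u(t)$ is harmonic on $B$, so that the bulk term produced by integrating by parts vanishes and only an integral over $S^1=\partial B$ survives. Fixing $0<t_0<T<T_0$, so that $u$ is smooth on $\bar B\times[t_0,T]$ and we may differentiate under the integral sign and integrate by parts, I would first record
\begin{equation*}
   \frac{d}{dt}E(u(t))=\int_B\nabla u\cdot\nabla u_t\,dz
   =\int_{\partial B}\partial_ru\cdot u_t\,d\phi-\int_B\Delta u\cdot u_t\,dz
   =\int_{\partial B}\partial_ru\cdot u_t\,d\phi ,
\end{equation*}
since $\Delta u(t)=0$ on $B$ and the outward unit normal along $\partial B$ is the radial field $\partial_r$. (Equivalently, one may differentiate the boundary representation of the energy in \eqref{1.5} directly.)

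The second and decisive step is the pointwise algebra on $S^1$. Splitting $\partial_ru=d\pi_N(u)\partial_ru+d\pi_N^{\perp}(u)\partial_ru$ orthogonally and inserting the flow equation \eqref{1.3} in the form $u_t+d\pi_N(u)\partial_ru=0$, one gets $\partial_ru\cdot u_t=-|u_t|^2+d\pi_N^{\perp}(u)\partial_ru\cdot u_t$ on $\partial B$. Differentiating the pointwise constraint $u(t,z)\in N$ in $t$ shows that $u_t(t,z)$ is tangent to $N$ at $u(t,z)$, whereas $d\pi_N^{\perp}(u)\partial_ru$ takes values in the normal space $T_u^{\perp}N$; hence the two are orthogonal at every boundary point and the remaining term drops out. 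This gives the infinitesimal form of the claim,
\begin{equation*}
   \frac{d}{dt}E(u(t))=-\int_{\partial B}|u_t|^2\,d\phi\le 0\qquad\text{for }0<t<T_0 ,
\end{equation*}
exhibiting the Plateau flow as a formal gradient flow of the Dirichlet energy, in analogy with the harmonic map heat flow.

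It then remains to integrate this identity over $[t_0,T]$, giving $E(u(T))+\int_{t_0}^T\int_{\partial B}|u_t|^2\,d\phi\,dt=E(u(t_0))$, and to let $t_0\downarrow0$: by monotone convergence the left-hand side tends to $E(u(T))+\int_0^T\int_{\partial B}|u_t|^2\,d\phi\,dt$, while $E(u(t_0))\to E(u_0)$ because the initial condition \eqref{1.4} is attained in $H^{1/2}(S^1;N)$ — equivalently, the harmonic extensions converge in $H^1(B;\R^n)$. This yields the asserted bound, in fact with equality. The calculation is entirely elementary; the only steps deserving a word of justification are the differentiation under the integral and the boundary integration by parts, both covered by the smoothness of $u$ up to $\partial B$ for $t>0$. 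The one mild obstacle is the passage $t_0\to0$: this is where the precise sense in which the datum $u_0$ is attained enters, and it is also the reason the conclusion is stated as an inequality rather than as the identity that the formal computation produces.
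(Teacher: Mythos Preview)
Your proof is correct and follows essentially the same route as the paper: differentiate $E(u(t))$, integrate by parts using harmonicity to reduce to the boundary term $\int_{\partial B}u_r\cdot u_t\,d\phi$, and then use the flow equation together with the fact that $d\pi_N(u)$ is an orthogonal projection to rewrite this as $-\int_{\partial B}|u_t|^2\,d\phi$. The paper compresses your tangent/normal split into the single line $u_r\cdot u_t=-|d\pi_N(u)u_r|^2=-|u_t|^2$ (using self-adjointness of the projection), and since it works with a solution that is smooth up to $t=0$ it simply integrates without the limiting step $t_0\downarrow 0$; your extra care there is harmless but not needed in this setting.
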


\begin{proof} Integrating by parts and using \eqref{1.3} we compute
\begin{equation*}
 \begin{split}  
  \frac{d}{dt}E(u)=\int_B\nabla u&\nabla u_t\,dx
  =\int_{\partial B}u_r\cdot u_t\,d\phi\\
  &=-\int_{\partial B}|d\pi_N(u)u_r|^2d\phi=-\int_{\partial B}|u_t|^2d\phi
 \end{split}
\end{equation*}
for any $0<t<T_0$. The claim follows by integration.
\end{proof} 

Moreover, there holds a localized version of this energy inequality. 

\begin{lemma}\label{lemma2.2}
There exists a constant $C>0$ such that
for any $z_0\in B$, any $0<R<1$, any $\varepsilon>0$, and any 
$0<t_0<t_1\le t_0+\varepsilon R<T_0$ 
there holds 
\begin{equation*}
 \begin{split}
  \int_B|\nabla u(t_1)|^2\varphi_{z_0,R}^2dz
  +4\int_{t_0}^{t_1}\int_{\partial B}&|u_t|^2\varphi_{z_0,R}^2d\phi\,dt\\
  &\le4\int_B|\nabla u(t_0)|^2\varphi_{z_0,R}^2dz+C\varepsilon E(u_0).
 \end{split}
\end{equation*}
\end{lemma}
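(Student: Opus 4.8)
The plan is to mimic the proof of Lemma~\ref{lemma2.1}, but now testing the evolution equation \eqref{1.3} against $u_t\,\varphi_{z_0,R}^2$ on $\partial B$ and translating the boundary term into a bulk computation via the harmonic extension. Concretely, since $u(t)$ is harmonic on $B$ and $u_t$ is (formally) harmonic as well, I would extend $\varphi_{z_0,R}^2$ to all of $B$ (it already lives there), write
\begin{equation*}
  \int_{\partial B}u_r\cdot u_t\,\varphi_{z_0,R}^2\,d\phi
  =\int_B\nabla\cdot\big((\nabla u\cdot u_t)\,\varphi_{z_0,R}^2\big)\,dz,
\end{equation*}
and expand the divergence. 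One term is $\nabla u:\nabla u_t\,\varphi_{z_0,R}^2=\tfrac12\partial_t(|\nabla u|^2)\varphi_{z_0,R}^2$, which after integration in $t$ produces the difference $\int_B|\nabla u(t_1)|^2\varphi^2 - \int_B|\nabla u(t_0)|^2\varphi^2$. A second term involves $\Delta u=0$ and drops. The remaining term is the ``error'' coming from the gradient of the cut-off, namely $2\int_B(\nabla u\cdot u_t)\,\varphi_{z_0,R}\,\nabla\varphi_{z_0,R}\,dz$, and this is where the $\varepsilon$ enters.

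Next I would use \eqref{1.3}, i.e. $u_t=-d\pi_N(u)\partial_r u$ on $\partial B$, to rewrite the boundary term $\int_{\partial B}u_r\cdot u_t\,\varphi^2\,d\phi=-\int_{\partial B}|d\pi_N(u)u_r|^2\varphi^2\,d\phi=-\int_{\partial B}|u_t|^2\varphi^2\,d\phi$, exactly as in Lemma~\ref{lemma2.1}. Assembling the identity, integrating over $t\in[t_0,t_1]$, and rearranging gives
\begin{equation*}
  \int_B|\nabla u(t_1)|^2\varphi_{z_0,R}^2\,dz + \int_{t_0}^{t_1}\int_{\partial B}|u_t|^2\varphi_{z_0,R}^2\,d\phi\,dt
  = \int_B|\nabla u(t_0)|^2\varphi_{z_0,R}^2\,dz + \mathrm{Err},
\end{equation*}
with $\mathrm{Err}=-2\int_{t_0}^{t_1}\int_B(\nabla u\cdot u_t)\,\varphi_{z_0,R}\,\nabla\varphi_{z_0,R}\,dz\,dt$ (up to sign bookkeeping). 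Wait — there is a subtlety: $u_t$ on the bulk is the time derivative of the harmonic extension, which is the harmonic extension of the boundary $u_t$, and $|\nabla u_t|$ need not be controlled pointwise; but I only ever need $u_t$ paired with $\nabla u$ in an $L^2$-type estimate over $B$, so Cauchy–Schwarz in the bulk is not the right move. Instead I would estimate $\mathrm{Err}$ by Cauchy–Schwarz in $t$ and the trace/Poincaré-type control: since $\|\nabla\varphi_{z_0,R}\|_{L^\infty}\le C/R$ and $\nabla\varphi_{z_0,R}$ is supported in an annulus of area $\O(R^2)$, one bounds $|\mathrm{Err}|\le \tfrac{C}{R}\int_{t_0}^{t_1}\big(\int_B|\nabla u|^2\big)^{1/2}\big(\int_{B\cap\operatorname{supp}\nabla\varphi}|u_t|^2\big)^{1/2}dt$. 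Using $\int_B|\nabla u|^2\le 2E(u_0)$ (Lemma~\ref{lemma2.1}), the harmonic extension bound $\int_B|u_t|^2\le C\int_{\partial B}|u_t|^2$, and then $\int_{t_0}^{t_1}\int_{\partial B}|u_t|^2\,d\phi\,dt\le E(u_0)$ again by Lemma~\ref{lemma2.1}, one gets (via one more Cauchy–Schwarz in $t$, paying a factor $(t_1-t_0)^{1/2}\le(\varepsilon R)^{1/2}$)
\begin{equation*}
  |\mathrm{Err}|\le \frac{C}{R}\,(E(u_0))^{1/2}\,(\varepsilon R)^{1/2}\,(E(u_0))^{1/2}\cdot\frac{1}{\sqrt R}\cdot\sqrt R = C\varepsilon\,E(u_0),
\end{equation*}
where the $R$-powers are arranged precisely so that $t_1-t_0\le\varepsilon R$ cancels the $1/R$ from $\nabla\varphi$. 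Then multiplying the rearranged identity by $4$ (to absorb the factor in front of $\int_B|\nabla u(t_0)|^2\varphi^2$, since the inequality allows slack there) and discarding the nonnegative boundary integral where convenient yields the stated inequality with constant $C$.

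The main obstacle — and the point that needs care — is the bookkeeping of $R$-powers in the error term: one must use that $\nabla\varphi_{z_0,R}$ is both $\O(1/R)$ in sup-norm \emph{and} supported on a set of measure $\O(R^2)$, together with the restriction $t_1-t_0\le\varepsilon R$, so that the dangerous $1/R$ factor is neutralized and the remainder is genuinely $\O(\varepsilon E(u_0))$ uniformly in $R$ and $z_0$. A secondary technical point is justifying the bulk integration by parts and the identity $\int_B|u_t|^2\le C\int_{\partial B}|u_t|^2$ for the harmonic extension of $u_t$ — this is just the standard energy bound for harmonic functions in terms of their trace (or, equivalently, that the harmonic extension is energy-minimizing), valid here since $u(t)$ is a smooth solution on $0<t<T_0$ so all quantities are smooth up to the boundary. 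With these in hand the computation is routine and parallels Lemma~\ref{lemma2.1} step for step.
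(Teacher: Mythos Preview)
Your plan is correct through the identification of the cross term $\mathrm{Err}=-2\int_{t_0}^{t_1}\int_B(\nabla u\cdot u_t)\,\varphi\nabla\varphi\,dz\,dt$, and this matches the paper's identity \eqref{2.1}. The gap is in your estimate of $\mathrm{Err}$. The displayed chain
\[
\frac{C}{R}\,(E(u_0))^{1/2}\,(\varepsilon R)^{1/2}\,(E(u_0))^{1/2}\cdot\frac{1}{\sqrt R}\cdot\sqrt R \;=\; C\varepsilon\,E(u_0)
\]
is simply false: the left side equals $C\sqrt{\varepsilon/R}\,E(u_0)$. More importantly, a direct Cauchy--Schwarz bound, using $\int_B|\nabla u|^2\le 2E(u_0)$ for one factor and Lemma~\ref{lemma2.1} for the other, can never produce more than one power of $(t_1-t_0)^{1/2}\le(\varepsilon R)^{1/2}$, hence at best $C\sqrt\varepsilon\,E(u_0)$; you cannot reach the stated $C\varepsilon\,E(u_0)$ this way. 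Also, the bound $\int_B|u_t|^2\le C\int_{\partial B}|u_t|^2$ is true but carries no $R$-gain, and ``support of $\nabla\varphi$ has measure $O(R^2)$'' does not help without $L^\infty$ control of $u_t$.

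The paper fixes both issues. First, it applies Young's inequality with parameter $8\varepsilon R$,
\[
2|u_t\nabla u\,\varphi\nabla\varphi|\le (8\varepsilon R)^{-1}|\nabla u|^2\varphi^2+8\varepsilon R\,|u_t|^2|\nabla\varphi|^2,
\]
and, setting $A=\sup_{t_0<t<t_1}\tfrac12\int_B|\nabla u|^2\varphi^2$, \emph{absorbs} the first piece via $(8\varepsilon R)^{-1}(t_1-t_0)\cdot 2A\le\tfrac12A$; this absorption is what converts $\sqrt\varepsilon$ into $\varepsilon$ and is the origin of the factors $4$ in the statement. Second, for the remaining piece it proves the localized estimate
\[
\int_{B_R(z_0)\cap B}|u_t|^2\,dz\le 2R\int_{\partial B}|u_t|^2\,d\phi,
\]
using harmonicity of $u_t$ (monotonicity of $r\mapsto\int_{\partial B_r(0)}|u_t|^2\,ds$ together with the fact that $B_R(z_0)$ meets only radii in an interval of length $2R$). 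Combining this with $|\nabla\varphi|^2\le CR^{-2}$ and Lemma~\ref{lemma2.1} gives $8\varepsilon R\cdot CR^{-2}\cdot 2R\,E(u_0)=C\varepsilon E(u_0)$.
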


\begin{proof} 
Writing $\varphi=\varphi_{z_0,R}$ for brevity, integrating by parts,
and using Young's inequality, similar to the proof of Lemma \ref{lemma2.1}
for any $0<t<T_0$ we have
\begin{equation}\label{2.1}
 \begin{split}
  \frac{d}{dt}&\big(\frac12\int_B|\nabla u|^2\varphi^2dz\big)
  =\int_{\partial B}u_t\cdot u_r\varphi^2d\phi
  -\int_B u_tdiv(\nabla u\varphi^2)dz\\
  &=-\int_{\partial B}|d\pi_N(u)u_r|^2\varphi^2d\phi
  -2\int_B u_t\nabla u\varphi\nabla\varphi dz\\
  &\le-\int_{\partial B}|u_t|^2\varphi^2d\phi
  +(8\varepsilon R)^{-1}\int_B|\nabla u|^2\varphi^2dz
  +8\varepsilon R\int_B|u_t|^2|\nabla\varphi|^2dz.
 \end{split}
\end{equation}
Letting 
\begin{equation*}
  A=\sup_{t_0<t<t_1}\big(\frac12\int_B|\nabla u(t)|^2\varphi^2dz\big),
\end{equation*}
then upon integration we find
\begin{equation*}
 \begin{split}
  A&+\int_{t_0}^{t_1}\int_{\partial B}|u_t|^2\varphi^2d\phi\;dt\\
  &\le\int_B|\nabla u(t_0)|^2\varphi^2dz+\frac{t_1-t_0}{2\varepsilon R}A
  +C\varepsilon R^{-1}\int_{t_0}^{t_1}\int_{B_R(z_0)\cap B}|u_t|^2dz\,dt.
 \end{split}
\end{equation*}
But with $u=u(t)$ also $u_t=u_t(t)$ is harmonic for each $t$. Expanding 
\begin{equation*}
 u_t(re^{i\phi})=\sum_{k\ge 0}a_kr^ke^{ik\phi}
\end{equation*}
in a Fourier series, we see that the map 
\begin{equation*}
 r\mapsto\int_{\partial B_r(0)}|u_t|^2ds=2\pi\sum_{k\ge 0}|a_k|^2r^{2k+1},
\end{equation*}
with $ds$ denoting the element of length along $\partial B_r(0)$, is non-decreasing.
Thus for any $z_0\in B$, any $0<R<1$, and any $t_0<t<t_1$ there holds
\begin{equation}\label{2.2}
\begin{split}
 \int_{B_R(z_0)\cap B}&|u_t|^2dz\le 2R\int_{\partial B}|u_t|^2d\phi,
 \end{split}
\end{equation}
and we may use Lemma \ref{lemma2.1} to conclude.
\end{proof} 

\section{A regularity estimate}\label{A regularity estimate}
To illustrate the key ideas that later will allow us to prove higher regularity and 
analyze blow-up of solutions of \eqref{1.3}, 
we first consider smooth solutions $u\in H^{1/2}(S^1;N)$ of the equation 
\begin{equation}\label{3.1}
  d\pi_N(u)\partial_ru+f=0\ \hbox{ on }\partial B=S^1,
\end{equation}
where $f\in L^2(S^1)$. We prove the following a-priori estimate, where we use classical estimates
similar to Wettstein's \cite{Wettstein-2022} Lemma 3.4, which in turn is a fractional version of
an earlier result by Rivi\`ere \cite{Riviere-1993}. Note that with the truncated 
signed distance function $dist_N\colon\R^n\to\R^m$ we have the orthogonal decomposition
\begin{equation}\label{3.2}
  \partial_ru=d\pi_N(u)\partial_ru+d\pi_N^{\perp}(u)\partial_ru
  =d\pi_N(u)\partial_ru+\nu(u)\partial_r(dist_N(u))
\end{equation}
on $\partial B=S^1$, where we recall that we use the shorthand notation 
\begin{equation*}
   \nu(u)\partial_r(dist_N(u))=\sum_{i=1}^m\nu_i(u)\partial_r(dist^i_N(u))
   =\sum_{i=1}^m\nu_i(u)\nu_i(u)\cdot\partial_ru
\end{equation*}
and extend
$\nu_i(p)=\nabla dist^i_N(p)$, $p\in\R^n$.

\begin{proposition}\label{prop3.1}
There exist constants $C,\delta_0=\delta_0(N)>0$ such that for any smooth solution 
$u\in H^{1/2}(S^1;N)$ of \eqref{3.1} with $E(u)\le\delta^2<\delta_0^2$ there holds
\begin{equation}\label{3.3}
  \int_{S^1}|\partial_{\phi}u|^2d\phi\le C\|f\|^2_{L^2(S^1)}.
\end{equation}
\end{proposition}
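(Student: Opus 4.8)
The plan is to convert the orthogonal splitting \eqref{3.2} into a quantitative estimate by pairing a Rellich--Pohozaev identity on the disc with the smallness of the energy, the latter entering only at the end to absorb a cubic term. Feeding \eqref{3.1} into \eqref{3.2}, on $\partial B=S^1$ one has $\partial_ru=-f+\sum_{i=1}^m\nu_i(u)\,\partial_r(dist^i_N(u))$; since $\{\nu_i(u)\}_i$ is orthonormal at each point, $\big|\sum_i\nu_i(u)\,\partial_r(dist^i_N(u))\big|^2=\sum_i|\partial_r(dist^i_N(u))|^2$ pointwise, and since $u$ is harmonic on $B$ a Fourier expansion gives $\|\partial_ru\|_{L^2(S^1)}=\|\partial_\phi u\|_{L^2(S^1)}$. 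Writing $X=\|\partial_\phi u\|_{L^2(S^1)}<\infty$, it therefore suffices to establish
\begin{equation*}
  X\le\|f\|_{L^2(S^1)}+C(N)\,\delta^{1/2}\,X,
\end{equation*}
since then, choosing $\delta_0=\delta_0(N)$ with $C(N)\delta_0^{1/2}\le\tfrac12$, absorption yields $X\le 2\|f\|_{L^2(S^1)}$ and hence \eqref{3.3} with $C=4$.

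To bound the normal term, fix $i$ and set $\psi=dist^i_N(u)\in C^\infty(\bar B)$, with $u$ identified with its harmonic extension. Since $u(S^1)\subset N$ and $dist_N$ vanishes on $N$ one has $\psi=0$ on $\partial B$; and since $u$ is harmonic the chain rule gives $\nabla\psi=\nu_i(u)\cdot\nabla u$ and $\Delta\psi=(D^2dist^i_N)(u)(\nabla u,\nabla u)$, so that $|\nabla\psi|\le C(N)|\nabla u|$ and $|\Delta\psi|\le C(N)|\nabla u|^2$ on $B$. The Rellich identity on the unit disc, applied to a function vanishing on $\partial B$ --- noting that $z\cdot\nu\equiv1$ on $\partial B$ and that the interior gradient term carries the dimensional factor $1-\tfrac{\dim B}{2}=0$ --- reads $\tfrac12\int_{\partial B}|\partial_r\psi|^2\,d\phi=\int_B(z\cdot\nabla\psi)\,\Delta\psi\,dz$. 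Summing over $i$,
\begin{equation*}
  \sum_{i=1}^m\int_{\partial B}|\partial_r(dist^i_N(u))|^2\,d\phi\le C(N)\int_B|\nabla u|^3\,dz.
\end{equation*}

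The remaining point, which is the crux, is to bound $\int_B|\nabla u|^3\,dz$ by $C(N)\,\delta\,X^2$. By H\"older's inequality $\int_B|\nabla u|^3\,dz\le\big(\int_B|\nabla u|^2\,dz\big)^{1/2}\|\nabla u\|_{L^4(B)}^2=\sqrt2\,E(u)^{1/2}\|\nabla u\|_{L^4(B)}^2\le\sqrt2\,\delta\,\|\nabla u\|_{L^4(B)}^2$, so it is enough to show $\|\nabla u\|_{L^4(B)}\le C\,X$. As $\nabla u$ is unchanged when $u$ is replaced by $v=u-\fint_{S^1}u$, whose trace has zero mean on $S^1$, the $W^{1,4}$-boundedness of the Poisson extension, the scaling-critical fractional Sobolev embedding $H^1(S^1)\hookrightarrow W^{3/4,4}(S^1)$ (legitimate since $1-\tfrac12=\tfrac34-\tfrac14$ and $2<4$), and the Poincar\'e inequality on $S^1$ give
\begin{equation*}
  \|\nabla u\|_{L^4(B)}=\|\nabla v\|_{L^4(B)}\le C\|v\|_{W^{3/4,4}(S^1)}\le C\|v\|_{H^1(S^1)}\le C\|\partial_\phi u\|_{L^2(S^1)}=C\,X.
\end{equation*}
Chaining the displays yields $X\le\|f\|_{L^2(S^1)}+C(N)\,\delta^{1/2}\,X$, as needed; passing from $u$ to $v$ costs nothing, because $d\pi_N$ and $dist_N$ only see $u$ through the fixed compact tube $N_\rho$ and $\|u\|_{L^\infty(B)}\le C(N)$ throughout.

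The one delicate step is exactly the cubic term: one has to extract a genuine positive power of the small energy while leaving the residual factor controlled by $\int_{S^1}|\partial_\phi u|^2\,d\phi$ alone. Interpolating straight to $L^3(B)$ fails for this, because $W^{2/3,3}(S^1)$ is not a Gagliardo--Nirenberg interpolate of $H^1(S^1)$ against any space dominated by the small $H^{1/2}$-norm; the remedy is to peel off one factor $\|\nabla u\|_{L^2(B)}\le\sqrt2\,\delta$ by H\"older and dispatch the leftover $\|\nabla u\|_{L^4(B)}^2$ through the critical trace chain above. An alternative route, closer to Da Lio--Rivi\`ere and Wettstein but reintroducing the fractional machinery the present approach avoids, is to observe that along $S^1$ the tangent field $\partial_\phi u$ is tangent to $N$, whence $d\pi_N^\perp(u)\,\partial_ru=[H,d\pi_N(u)]\,\partial_\phi u$ for the Hilbert transform $H$ on $S^1$, and to invoke a Coifman--Rochberg--Weiss type commutator bound $\|[H,b]v\|_{L^2(S^1)}\le C\|b\|_{\dot H^{1/2}(S^1)}\|v\|_{L^2(S^1)}$ together with $\|d\pi_N(u)\|_{\dot H^{1/2}(S^1)}\le C(N)E(u)^{1/2}$.
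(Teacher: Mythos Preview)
Your argument is correct, and it reaches the same endpoint as the paper --- the inequality $\|\partial_r(dist_N(u))\|_{L^2(S^1)}^2\le C\delta\,\|\nabla u\|_{L^4(B)}^2$ followed by $\|\nabla u\|_{L^4(B)}\le C\|\partial_\phi u\|_{L^2(S^1)}$ and absorption --- but the route to the first estimate is genuinely different. The paper applies the divergence theorem to $\psi_r\nabla\psi$ with $\psi=dist_N(u)$ to write $\|\psi_r\|_{L^2(S^1)}^2=(\nabla\psi,\nabla\psi_r)_{L^2(B)}+(\Delta\psi,\psi_r)_{L^2(B)}$, and then invokes the global elliptic estimate $\|\nabla^2\psi\|_{L^2(B)}\le C\|\Delta\psi\|_{L^2(B)}$ for $\psi\in H^1_0(B)$; you instead use the two--dimensional Rellich--Pohozaev identity $\tfrac12\int_{\partial B}|\partial_r\psi|^2\,d\phi=\int_B(z\cdot\nabla\psi)\,\Delta\psi\,dz$, which bypasses Calder\'on--Zygmund entirely and yields the cubic bound $\int_B|\nabla u|^3\,dz$ in one stroke. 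For the $L^4$ control of $\nabla u$ the paper stays on the disc, using $H^{1/2}(B)\hookrightarrow L^4(B)$ and the Fourier bound $\|\nabla u\|_{H^{1/2}(B)}\le C\|\nabla u\|_{L^2(S^1)}$, while you pass through trace theory on $S^1$; both are equivalent here. Your approach is arguably more elementary for this single proposition; on the other hand, the paper's route yields the intermediate $H^2$--bound $\|\nabla^2(dist_N(u))\|_{L^2(B)}\le C\|\nabla u\|_{L^4(B)}^2$, a quantity that is reused repeatedly later (Lemmas~\ref{lemma4.3} and~\ref{lemma4.9}, and the uniqueness argument in Section~\ref{Uniqueness}), so it fits more naturally into the paper's overall machinery. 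Your closing remark identifying $d\pi_N^{\perp}(u)\partial_ru$ with the commutator $[H,d\pi_N(u)]\partial_\phi u$ is also correct and nicely isolates where the Da~Lio--Rivi\`ere commutator estimates would enter in the alternative fractional approach.
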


\begin{proof}
Multiplying \eqref{3.2} with $\partial_ru$, we find the Pythagorean identity
\begin{equation}\label{3.4}
  |\partial_ru|^2=|d\pi_N(u)\partial_ru|^2+|d\pi^{\perp}_N(u)\partial_ru|^2
  =|d\pi_N(u)\partial_ru|^2+|\partial_r(dist_N(u))|^2.
\end{equation}

Note that $dist_N(u)\in H^1_0(B)$; moreover, for each $1\le i\le m$ we have 
$\nabla(dist^i_N(u))=\nu_i(u)\cdot\nabla u$, and there holds the equation 
\begin{equation}\label{3.5}
  \Delta(dist^i_N(u))=div(\nu_i(u)\cdot\nabla u)=\nabla u\cdot d\nu_i(u)\nabla u\ \hbox{ in }B.
\end{equation}
The divergence theorem now gives
\begin{equation*}
 \begin{split}
  \|\partial_r&(dist_N(u))\|^2_{L^2(S^1)}\\
  &=(\nabla(dist_N(u)),\nabla(dist_N(u))_r)_{L^2(B)}
  +(\Delta(dist_N(u)),(dist_N(u))_r)_{L^2(B)}\\
  &\le C\|\nabla u\|_{L^2(B)}\|\nabla^2(dist_N(u))\|_{L^2(B)}
  \le C\delta\|\nabla^2(dist_N(u))\|_{L^2(B)},
 \end{split}
\end{equation*}
where the basic $L^2$-theory for the Laplace equation \eqref{3.5} yields the bound
\begin{equation*}
  \|\nabla^2(dist_N(u))\|_{L^2(B)}\le C\|\Delta(dist^i_N(u))\|_{L^2(B)}
  \le C\|\nabla u\|^2_{L^4(B)}.
\end{equation*}
With Sobolev's embedding $H^{1/2}(B)\hookrightarrow L^4(B)$ we then conclude
\begin{equation*}
    \|\partial_r(dist_N(u))\|^2_{L^2(S^1)}\le C\delta\|\nabla u\|^2_{H^{1/2}(B)}.
\end{equation*}

Thus from \eqref{3.4} and \eqref{3.1} we have
\begin{equation}\label{3.6}
  \begin{split}
    \|\partial_ru\|^2_{L^2(S^1)}&\le\|f\|^2_{L^2(S^1)}+\|\partial_r(dist_N(u))\|^2_{L^2(S^1)}\\
    &\le\|f\|^2_{L^2(S^1)}+C\delta\|\nabla u\|^2_{H^{1/2}(B)}.
   \end{split}
\end{equation}
But Fourier expansion of the harmonic function $u$ gives
\begin{equation}\label{3.7}
  \|\partial_{\phi}u\|^2_{L^2(S^1)}=\|\partial_ru\|^2_{L^2(S^1)}=\frac12\|\nabla u\|^2_{L^2(S^1)}
\end{equation}
as well as the bound
\begin{equation*}
    \|\nabla u\|^2_{H^{1/2}(B)}\le C\|\nabla u\|^2_{L^2(S^1)},
\end{equation*}
and from \eqref{3.6} we obtain 
\begin{equation*}
  \begin{split}
    \|\partial_ru\|^2_{L^2(S^1)}&\le\|f\|^2_{L^2(S^1)}+C\delta\|\nabla u\|^2_{H^{1/2}(B)}
    \le\|f\|^2_{L^2(S^1)}+C\delta\|\partial_r u\|^2_{L^2(S^1)},
   \end{split}
\end{equation*}
which for sufficiently small $\delta>0$ by \eqref{3.7} yields the claim. 
\end{proof}

In particular, from Proposition \ref{prop3.1} we obtain a positive energy threshold for 
non-constant solutions of \eqref{1.6}.

\begin{corollary}\label{cor3.2}
Suppose $u\in H^{1/2}(S^1;N)$ smoothly solves \eqref{1.6}. Then, either $u$ is constant, or 
$E(u)\ge\delta_0^2$, with $\delta_0=\delta_0(N)>0$ given by Proposition \ref{prop3.1}. 
\end{corollary}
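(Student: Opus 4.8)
The plan is to deduce Corollary \ref{cor3.2} directly from Proposition \ref{prop3.1} by taking $f\equiv 0$. Indeed, a smooth solution $u\in H^{1/2}(S^1;N)$ of \eqref{1.6} is, by the orthogonal decomposition \eqref{3.2}, precisely a smooth solution of \eqref{3.1} with $f=0$, since \eqref{1.6} reads $d\pi_N(u)\partial_ru=0$ on $S^1$. So the hypotheses of Proposition \ref{prop3.1} are met as soon as we can verify the smallness assumption $E(u)\le\delta^2<\delta_0^2$.

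The argument then runs by contradiction on the energy. Suppose $u$ solves \eqref{1.6} and $0<E(u)<\delta_0^2$. Then Proposition \ref{prop3.1} applies with $f=0$ and yields $\int_{S^1}|\partial_{\phi}u|^2d\phi\le C\|f\|^2_{L^2(S^1)}=0$, hence $\partial_{\phi}u\equiv 0$ on $S^1$. Thus the trace of $u$ is constant, and since $u$ is the harmonic extension of its trace, $u$ itself is constant on $B$, so $E(u)=0$, contradicting $E(u)>0$. Therefore every non-constant smooth solution of \eqref{1.6} must satisfy $E(u)\ge\delta_0^2$, which is the claim.

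There is essentially no obstacle here; the only point requiring a word of care is matching the two formulations. I would note explicitly that \eqref{1.6} is the special case $f=0$ of \eqref{3.1}, so that one does not need to re-derive anything, and that the energy identity \eqref{3.7} is what converts the vanishing of $\partial_\phi u$ on $S^1$ into the vanishing of the full Dirichlet energy of the harmonic extension. One should also remark that the threshold $\delta_0$ in Proposition \ref{prop3.1} was chosen independently of $f$, so the same $\delta_0(N)$ serves in the corollary. With these remarks in place the corollary is immediate.

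Concretely, I would write: if $u$ solves \eqref{1.6} then $d\pi_N(u)\partial_ru=0$, so $u$ solves \eqref{3.1} with $f=0$; were $0<E(u)<\delta_0^2$, Proposition \ref{prop3.1} would give $\|\partial_\phi u\|_{L^2(S^1)}^2\le C\cdot 0=0$, whence by \eqref{3.7} also $\|\nabla u\|_{L^2(B)}^2=0$, so $u$ is constant, contradicting $E(u)>0$. Hence either $u$ is constant or $E(u)\ge\delta_0^2$.
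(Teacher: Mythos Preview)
Your proof is correct and matches the paper's intended argument: the paper presents Corollary \ref{cor3.2} as an immediate consequence of Proposition \ref{prop3.1} (with $f=0$), without giving a separate proof. One minor remark: your appeal to \eqref{3.7} in the final paragraph is slightly imprecise, since \eqref{3.7} relates norms on $S^1$ rather than the Dirichlet energy on $B$; the cleaner justification is the one you already gave earlier, namely that $\partial_\phi u\equiv 0$ on $S^1$ forces the trace to be constant, whence the harmonic extension is constant and $E(u)=0$.
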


Combining the ideas in the proof of the previous result with ideas from the classical proof 
of the Courant-Lebesgue lemma in minimal surface theory, we can obtain the following
local version of Proposition \ref{prop3.1}. 

\begin{proposition}\label{prop3.3}
There exists a constant $\delta>0$ with the following property. Given any smooth solution 
$u\in H^{1/2}(S^1;N)$ of \eqref{3.1} with harmonic extension $u\in H^1(B)$, 
any $z_0\in\partial B$, and any $0<R\le 1/2$ such that 
\begin{equation}\label{3.8}
  \int_{B_R(z_0)\cap B}|\nabla u|^2dz<\delta^2,
\end{equation}
with a constant $C=C(R)>0$ there holds
\begin{equation*}
  \int_{B_{R^2}(z_0)\cap S^1}|\partial_{\phi}u|^2d\phi\le C\|f\|^2_{L^2(B_R(z_0)\cap S^1)}
  +CE(u).
\end{equation*}
\end{proposition}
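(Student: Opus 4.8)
The plan is to localize the argument of Proposition~\ref{prop3.1} using the cut-off functions $\varphi_{z_0,R}$, with one extra ingredient borrowed from the Courant--Lebesgue lemma to control the boundary behaviour of $u$ on the annular region $B_R(z_0)\setminus B_{R^2}(z_0)$. First I would apply the Courant--Lebesgue lemma in the form standard in minimal surface theory: since $\int_{B_R(z_0)\cap B}|\nabla u|^2\,dz<\delta^2$, for a suitable radius $\sigma\in[R^2,R]$ the restriction of $u$ to the circular arc $\partial B_\sigma(z_0)\cap B$ has small oscillation, in particular $u$ maps this arc into a single coordinate patch of the tubular neighbourhood $N_\rho$, and its $H^{1/2}$-norm along that arc is controlled by $\delta$ (up to a factor depending on $R$ through $\log(1/R)$). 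This lets me treat $\Omega_\sigma\coloneqq B_\sigma(z_0)\cap B$ as a model domain on which the computations of Proposition~\ref{prop3.1} can be run with Dirichlet-type data on the interior arc.

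\smallskip

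Next I would repeat the orthogonal decomposition \eqref{3.2}--\eqref{3.4} and the equation \eqref{3.5} for $dist^i_N(u)$, but now on $\Omega_\sigma$ rather than on all of $B$. The key point is that $\nabla\bigl(dist^i_N(u)\bigr)=\nu_i(u)\cdot\nabla u$ still solves the elliptic equation $\Delta\bigl(dist^i_N(u)\bigr)=\nabla u\cdot d\nu_i(u)\nabla u$ interiorly, so on $\Omega_\sigma$ the function $v\coloneqq dist_N(u)$ satisfies $\|\Delta v\|_{L^2(\Omega_\sigma)}\le C\|\nabla u\|_{L^4(\Omega_\sigma)}^2$, together with a boundary term: on $\partial B\cap\partial\Omega_\sigma$ we have $v=0$ by \eqref{1.7}, while on the interior arc $\partial B_\sigma(z_0)\cap B$ the trace of $v$ has small $H^{3/2}$-norm by the Courant--Lebesgue estimate of the previous step. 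Elliptic $L^2$-regularity on $\Omega_\sigma$ then gives $\|\nabla^2 v\|_{L^2(\Omega_\sigma)}\le C(R)\bigl(\|\nabla u\|_{L^4(\Omega_\sigma)}^2+\text{(boundary data)}\bigr)$, and Sobolev's embedding $H^{1/2}\hookrightarrow L^4$ (localized, with a constant depending on $R$) converts $\|\nabla u\|_{L^4(\Omega_\sigma)}^2$ into $\|\nabla u\|_{H^{1/2}(\Omega_\sigma)}^2$. Via the divergence theorem exactly as in Proposition~\ref{prop3.1} this controls $\|\partial_r(dist_N(u))\|^2_{L^2(\partial B\cap\partial\Omega_\sigma)}$ by $C\delta\|\nabla u\|^2_{H^{1/2}(\Omega_\sigma)}+C(R)E(u)$, the last term absorbing the contributions of the cut-off and of the interior arc.

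\smallskip

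Then, using \eqref{3.1} and the Pythagorean identity \eqref{3.4} restricted to $\partial B\cap\partial\Omega_\sigma$, I get
$\|\partial_r u\|^2_{L^2(\partial B\cap\partial\Omega_\sigma)}\le \|f\|^2_{L^2(\partial B\cap\partial\Omega_\sigma)}+C\delta\|\nabla u\|^2_{H^{1/2}(\Omega_\sigma)}+C(R)E(u)$. It remains to convert the interior $H^{1/2}$-norm of $\nabla u$ on $\Omega_\sigma$ into a boundary $L^2$-norm on $\partial B\cap\partial\Omega_\sigma$, which is the local substitute for the Fourier-expansion identities \eqref{3.7}. For the harmonic function $u$ on $\Omega_\sigma$ this is a trace/interpolation estimate: $\|\nabla u\|^2_{H^{1/2}(\Omega_\sigma)}\le C\|\partial_\phi u\|^2_{L^2(\partial B\cap\partial\Omega_\sigma)}+C(R)E(u)$, where again the interior arc and the cut-off contribute only $E(u)$ since there $\nabla u$ is bounded by the Dirichlet energy; harmonicity gives $|\partial_\phi u|=|\partial_r u|$ on the flat boundary piece. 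Feeding this back and choosing $\delta$ small enough to absorb the $C\delta\|\partial_\phi u\|^2$ term, and finally noting $B_{R^2}(z_0)\cap S^1\subset \partial B\cap\partial\Omega_\sigma$ since $\sigma\ge R^2$, yields the claimed estimate.

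\smallskip

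I expect the main obstacle to be the bookkeeping of the boundary contributions on the interior arc $\partial B_\sigma(z_0)\cap B$ in the elliptic estimate for $dist_N(u)$: one needs the Courant--Lebesgue step to deliver not just small oscillation but genuine control of $\|dist_N(u)\|_{H^{3/2}(\partial B_\sigma(z_0)\cap B)}$ (or an equivalent quantity sufficient for $L^2$-second-derivative bounds), and this requires care because $dist_N(u)$ is a nonlinear function of $u$ and the arc length depends on $\sigma\sim R^2$, so constants degenerate as $R\to0$ — hence the unavoidable dependence $C=C(R)$. A secondary technical point is making the localized Sobolev embedding and the localized harmonic trace estimate on the variable domain $\Omega_\sigma$ uniform in $\sigma\in[R^2,R]$; this is routine but must be done with a fixed reference domain after rescaling.
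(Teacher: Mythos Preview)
Your approach differs substantially from the paper's and has a real gap at the closing step. The paper does \emph{not} try to run the estimates of Proposition~\ref{prop3.1} on the subdomain $\Omega_\sigma=B_\sigma(z_0)\cap B$. Instead, after the Courant--Lebesgue choice of $\rho\in[R^2,R]$, it uses a conformal self-map $\Phi_0$ of $B$ fixing the arc $C_\rho$ to reflect $u|_{\Omega_\rho}$ onto $B\setminus\Omega_\rho$, producing a piecewise harmonic $v_1\colon B\to\R^n$ with $E(v_1)\le\delta^2$. Decomposing $v_1=v_0+w$ with $v_0$ harmonic and $w\in H^1_0(B)$, one gets $E(v_0)\le\delta^2$, while the jump of $\partial_\nu v_1$ across $C_\rho$ (controlled by the Courant--Lebesgue bound) yields $\|w\|_{H^{3/2}(B)}\le C(R)E(u)^{1/2}$. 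Now Proposition~\ref{prop3.1} applies \emph{globally} to $v_0$, and since $v_0=v_1=u$ on $S^1\cap B_\rho(z_0)$ one reads off the desired local bound. The point is that all the Fourier identities \eqref{3.7} and the $H^{1/2}\hookrightarrow L^4$ step are invoked only on the full disk, where they hold unconditionally.

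Your route requires localized substitutes for these global identities, and that is where it breaks. The statement ``harmonicity gives $|\partial_\phi u|=|\partial_r u|$ on the flat boundary piece'' is false: the equality $\|\partial_\phi u\|_{L^2(S^1)}=\|\partial_r u\|_{L^2(S^1)}$ in \eqref{3.7} is a global fact from the Fourier expansion of a harmonic function on all of $B$; it is neither pointwise nor valid in $L^2$ over a sub-arc. Likewise, the trace/interpolation step $\|\nabla u\|^2_{H^{1/2}(\Omega_\sigma)}\le C\|\partial_\phi u\|^2_{L^2(\partial B\cap\partial\Omega_\sigma)}+C(R)E(u)$ is not a standard estimate: on the full disk it again comes from Fourier, but on the corner domain $\Omega_\sigma$ (with mixed boundary pieces meeting non-tangentially) there is no such inequality available, and in fact $H^2$-regularity for $dist_N(u)$ up to the corners is itself delicate. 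Without a valid way to convert the interior $H^{1/2}$-norm of $\nabla u$ back into a boundary $L^2$-norm on the sub-arc, you cannot absorb the $C\delta\|\nabla u\|^2_{H^{1/2}}$ term and close the argument. The conformal reflection in the paper is precisely the device that sidesteps all of this by reducing to the global setting.
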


\begin{proof} 
Fix any $z_0\in\partial B$ and $0<R\le 1/2$ such that \eqref{3.8} holds.
For suitable $\rho\in[R^2,R]$, with $s$ denoting arc-length along the curve 
$C_{\rho}=\{z_0+\rho e^{i\theta}\in B; \theta\in\R\}$
with end-points $z_j=z_0+\rho e^{i\theta_j}=e^{i\phi_j}\in\partial B$, $j=1,2$, we have 
\begin{equation*}
 \begin{split}
  \rho\int_{C_{\rho}}|\nabla u|^2ds
  \le 2\inf_{R^2<\rho'<R}\Big(\rho'\int_{C_{\rho'}}|\nabla u|^2ds\Big).
  \end{split}
\end{equation*}
We can bound the latter infimum by the average over $\rho\in[R^2,R]$ 
with respect to the measure with density $1/\rho$ to obtain the bound
\begin{equation}\label{3.9}
 \begin{split}
  \rho\int_{C_{\rho}}&|\nabla u|^2ds
  \le 2\int_{R^2}^R\int_{C_{\rho}}|\nabla u|^2ds\,d\rho\Big/\int_{R^2}^R\frac{d\rho}{\rho}\\
  &\le 2\int_B|\nabla u|^2dz\Big/|\log(R)|=4 E(u)/|\log(R)|.
 \end{split}
\end{equation}
Let $\Phi_0\colon B\to B$ be the conformal map fixing the circular arc
$C_{\rho}$ and mapping the point $z_0$ to the point $-z_0$, obtained 
as composition $\Phi_0=\pi_0^{-1}\circ\Psi_0\circ\pi_0$ of stereographic projection
$\pi_0\colon B\to\R^2_+$ from the point $-z_0$ and reflection $\Psi_0\colon\R^2_+\to\R^2_+$ 
of the upper half-plane $\R^2_+$ in the half-circle $\pi_0(C_{\rho})$.
Replacing $u$ by the map $u\circ\Phi_0$ in $B\setminus B_{\rho}(z_0)$
we obtain a piecewise smooth map $v_1\colon B\to\R^n$ which is harmonic on 
$B\setminus C_{\rho}$ and continuous on all of $B$. 
Let $v_0\in H^1(B)$ be harmonic with $w:=v_1-v_0\in H_0^1(B)$.
Note that by the variational characterization of harmonic functions and conformal 
invariance of the Dirichlet integral we have 
\begin{equation}\label{3.10}
  E(v_0)\le E(v_1)\le\int_{B_R(z_0)\cap B}|\nabla u|^2dz\le\delta^2.
\end{equation}
Moreover, for any smooth $\varphi\in H_0^1(B)$ by \eqref{3.9} we can estimate
\begin{equation*}
 \begin{split}
  \big|\int_B&\nabla w\nabla\varphi dz\big|=\big|\int_B\nabla v_1\nabla\varphi dz\big|
  =\big|\int_{C_{\rho}}[\partial_{\nu}v_1]\varphi ds\big|\\
  &\le\big(\int_{C_{\rho}}|\nabla u|^2ds\Big)^{1/2}
  \big(\int_{C_{\rho}}|\varphi|^2ds\Big)^{1/2}
  \le C(R)E(u)^{1/2}\|\varphi\|_{H^{1/2}(B)},
  \end{split}
\end{equation*}
where $[\partial_{\nu}v_1]$ denotes the difference of the outer and inner normal derivatives
of $v_1$ along $C_{\rho}$.
Thus we have $\Delta w\in H^{-1/2}(B)$, and the basic $L^2$-theory for the Laplace equation 
gives $w\in H^{3/2}\cap H_0^1(B)$ with 
\begin{equation*}
 \begin{split}
  \|w\|_{H^{3/2}(B)}
  \le\sup_{\varphi\in H_0^1(B),\|\varphi\|_{H^{1/2}(B)}\le 1}
  \big(\int_B\nabla w\nabla\varphi dz\big)\le C(R)E(u)^{1/2}
  \end{split}
\end{equation*}
and then also 
\begin{equation}\label{3.11}
  \|\partial_rw\|^2_{L^2(S^1)}\le C\|w\|^2_{H^{3/2}(B)}\le C(R)E(u).
\end{equation}

In view of \eqref{3.10}, for sufficiently small $\delta>0$
from Proposition \ref{prop3.1} we obtain the estimate 
\begin{equation}\label{3.12}
    \|\partial_{\phi}v_0\|^2_{L^2(S^1)}\le C\|d\pi_N(v_0)\partial_rv_0\|^2_{L^2(S^1)}.
\end{equation}
Observe that since $v_0=v_1$ on $\partial B=S^1$ and since we also have $v_1=u$ on 
$B\cap B_{\rho}(z_0)$, $v_1=u\circ\Phi_0$ on $B\setminus B_{\rho}(z_0)$, 
respectively, we can bound
\begin{equation*}
 \begin{split}
    \|d\pi_N(v_0)\partial_rv_0\|^2_{L^2(S^1)}&=\|d\pi_N(v_1)\partial_rv_0\|^2_{L^2(S^1)}\\
    &\le 2\|d\pi_N(v_1)\partial_rv_1\|^2_{L^2(S^1)}+2\|\partial_rw\|^2_{L^2(S^1)}
 \end{split}
\end{equation*}
and
\begin{equation*}
    \|d\pi_N(v_1)\partial_rv_1\|^2_{L^2(S^1)}
    \le C(R)\|d\pi_N(u)\partial_ru\|^2_{L^2(S^1\cap B_{\rho}(z_0))}.
\end{equation*}
Thus from \eqref{3.11} we obtain
\begin{equation*}
  \begin{split}
    \|d\pi_N(v_0)\partial_rv_0\|^2_{L^2(S^1)}
    &\le C(R)\|d\pi_N(u)\partial_ru\|^2_{L^2(S^1\cap B_{\rho}(z_0)}
    +C\|\partial_rw\|^2_{L^2(S^1)}\\
    &\le C(R)\|f\|^2_{L^2(S^1\cap B_{\rho}(z_0))}+C(R)E(u),
   \end{split}
\end{equation*}
and from \eqref{3.12} there results the bound
\begin{equation*}
  \begin{split}
  \|\partial_{\phi}&u\|^2_{L^2(S^1\cap B_{\rho}(z_0))}
  =\|\partial_{\phi}v_0\|^2_{L^2(S^1\cap B_{\rho}(z_0))}
  \le\|\partial_{\phi}v_0\|^2_{L^2(S^1)}\\
  &\le C\|d\pi_N(v_0)\partial_rv_0\|^2_{L^2(S^1)}
  \le C(R)\|f\|^2_{L^2(S^1\cap B_R(z_0)))}+C(R)E(u),
   \end{split}
\end{equation*}
as claimed.
\end{proof}

The local estimate Proposition \ref{prop3.3} also implies the following global bound.

\begin{proposition}\label{prop3.4}
There exists a constant $\delta>0$ with the following property. Given any smooth solution 
$u\in H^{1/2}(S^1;N)$ of \eqref{3.1}, any $0<R\le 1/2$ with 
\begin{equation}\label{3.13}
  \sup_{z_0\in B}\int_{B_R(z_0)\cap B}|\nabla u|^2dz<\delta^2,
\end{equation}
there holds
\begin{equation*}
  \int_{S^1}|\partial_{\phi}u|^2d\phi\le C(R)\|f\|^2_{L^2(S^1)}
  +C(R)E(u).
\end{equation*}
\end{proposition}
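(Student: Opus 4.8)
The plan is to deduce the global bound from the local Proposition \ref{prop3.3} by a covering argument. Since $\partial_\phi u$ and $\partial_r u$ have the same $L^2(S^1)$-norm (see \eqref{3.7}), it suffices to control $\|\partial_\phi u\|_{L^2(S^1)}$ by adding up the local estimates along $\partial B$. Fix $R\le 1/2$ as in \eqref{3.13}. Cover $S^1=\partial B$ by finitely many balls $B_{R^2}(z_0^{(j)})$, $z_0^{(j)}\in\partial B$, $1\le j\le J$, where $J=J(R)$ depends only on $R$; for each $j$ the hypothesis \eqref{3.13} gives in particular $\int_{B_R(z_0^{(j)})\cap B}|\nabla u|^2\,dz<\delta^2$, so Proposition \ref{prop3.3} applies with this $z_0^{(j)}$ and yields
\begin{equation*}
  \int_{B_{R^2}(z_0^{(j)})\cap S^1}|\partial_{\phi}u|^2d\phi\le C(R)\|f\|^2_{L^2(B_R(z_0^{(j)})\cap S^1)}
  +C(R)E(u).
\end{equation*}
Summing over $j$ and using that the balls $B_R(z_0^{(j)})$ have bounded overlap (again with a bound depending only on $R$) so that $\sum_j\|f\|^2_{L^2(B_R(z_0^{(j)})\cap S^1)}\le C(R)\|f\|^2_{L^2(S^1)}$, and that $\sum_j E(u)=J\,E(u)=C(R)E(u)$, we obtain
\begin{equation*}
  \int_{S^1}|\partial_{\phi}u|^2d\phi\le\sum_{j=1}^J\int_{B_{R^2}(z_0^{(j)})\cap S^1}|\partial_{\phi}u|^2d\phi\le C(R)\|f\|^2_{L^2(S^1)}+C(R)E(u),
\end{equation*}
which is the assertion.

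The only points requiring a little care are the combinatorial bookkeeping: one must choose the cover of $\partial B$ by arcs of length comparable to $R^2$ so that the number of balls $J$ and the overlap multiplicity of the enlarged balls $B_R(z_0^{(j)})$ are both bounded in terms of $R$ alone, and then verify that the right-hand sides genuinely combine to the stated form. The value $\delta>0$ is simply the one furnished by Proposition \ref{prop3.3}. I do not anticipate any real obstacle here; the substance of the argument is entirely contained in Proposition \ref{prop3.3}, and this statement is essentially its immediate globalization via a fixed finite cover. If one wishes to avoid even the mild loss of explicit constants, one can alternatively note that \eqref{3.13} with a \emph{fixed} $R$ allows one to work with a single scale, but the covering formulation above is cleanest and suffices for all later applications.
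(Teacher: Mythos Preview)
Your proposal is correct and matches the paper's own proof, which is simply the one-line remark that covering $\partial B$ with balls $B_{R^2}(z_i)$ and applying Proposition~\ref{prop3.3} gives the claim. You have merely spelled out the covering and summation details (the bounded overlap for the $f$-terms is not even needed, since the trivial bound $\|f\|^2_{L^2(B_R(z_0^{(j)})\cap S^1)}\le\|f\|^2_{L^2(S^1)}$ and $J=J(R)$ already suffice).
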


\begin{proof}
Covering $\partial B$ with balls $B_{R^2}(z_i)$, $1\le i\le i_0$, from 
Proposition \ref{prop3.3} we obtain the claim.
\end{proof}

\begin{remark}\label{rem3.5}
The proofs of the above propositions only require $u\in H^1(S^1;N)$ with harmonic extension
$u\in H^{3/2}(B)$.
\end{remark}

\section{Higher regularity}\label{Higher regularity}
Again let $u(t)$ be a smooth solution of the half-harmonic heat flow \eqref{1.3}
for $0<t<T_0$ with smooth initial data \eqref{1.4}. We show that as long as the flow 
does not concentrate energy in the sense of Theorem \ref{thm1.1}.ii) the solution remains
smooth and can be a-priori bounded in any $H^k$-norm in terms of the data. 

\subsection{$H^2$-bound}
In a first step we show an $L^2$-bound in space-time for the second derivatives of
our solution to the flow \eqref{1.3}. Recall that by harmonicity, 
writing $u=u(t)$, $\partial_{\phi}u=u_{\phi}$, and so on, for any $0<t<T_0$ we have
\eqref{3.7}, that is, 
\begin{equation*}
  \int_{\partial B}|u_{\phi}|^2d\phi=\int_{\partial B}|u_r|^2d\phi,
\end{equation*}
as Fourier expansion shows, with similar identities for partial derivatives of $u$ 
of higher order. Indeed, writing 
\begin{equation}\label{4.1}
  \Delta u=\frac{1}{r}(ru_r)_r+\frac{1}{r^2}u_{\phi\phi}
\end{equation}
we see that also $\partial_{\phi}^ju$ and then also $\nabla^{k-j}\partial_{\phi}^ju$ 
is harmonic for any $j\le k$ in $\N_0$, where $\nabla u=(u_x,u_y)$ in Euclidean 
coordinates $z=x+iy$. Thus by induction we obtain
\begin{equation}\label{4.2}
  \int_{\partial B}|\nabla^ku|^2d\phi=2\int_{\partial B}|\nabla^{k-1}u_{\phi}|^2d\phi
  =\dots=2^k\int_{\partial B}|\partial_{\phi}^ku|^2d\phi
\end{equation}
for any $k\in\N$. 
Similarly, for any $1/4<r<1$ with uniform constants $C>0$ we have  
\begin{equation*}
  \int_{\partial B_r(0)}|\nabla^ku|^2dz
  \le C\int_{\partial B_r(0)}|\nabla^{k-1}u_{\phi}|^2dz
  \le\dots\le C\int_{\partial B_r(0)}|\partial_{\phi}^ku|^2dz.
\end{equation*}
Integrating, and using the mean value property of harmonic functions together with \eqref{4.2}
to bound 
\begin{equation*}
  \sup_{B_{1/4}(0)}|\nabla^ku|^2
  \le C\int_{B\setminus B_{1/4}(0)}|\nabla^ku|^2dz\le C\int_B|\nabla\partial_{\phi}^{k-1}u|^2dz,
\end{equation*}
in particular, for any $k\in\N$ we have the bound
\begin{equation}\label{4.3}
  \int_B|\nabla^ku|^2dz\le C\int_B|\nabla\partial_{\phi}^{k-1}u|^2dz
\end{equation}
with an absolute constant $C>0$. 

The following lemma is strongly reminiscent of analogous results
for the harmonic map heat flow in two space dimensions. 

\begin{lemma}\label{lemma4.1}
With a constant $C>0$ depending only on $N$ there holds 
\begin{equation*}
  \frac{d}{dt}\big(\int_{\partial B}|u_{\phi}|^2d\phi\big)+\int_B|\nabla u_{\phi}|^2dz
  \le C\int_B|\nabla u|^2|u_{\phi}|^2dz.
\end{equation*}
\end{lemma}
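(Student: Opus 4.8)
The plan is to differentiate the equation \eqref{1.3} tangentially, test against $u_\phi$, and control the resulting terms by integrating by parts in $B$, exploiting harmonicity of $u$ and $u_\phi$. First I would note that since $u_t = -d\pi_N(u)\partial_r u = \nu(u)\partial_r(dist_N(u)) - \partial_r u$ on $\partial B$, and since $u$ and $u_\phi$ are harmonic in $B$, we have by integration by parts
\begin{equation*}
  \frac12\frac{d}{dt}\int_{\partial B}|u_\phi|^2\,d\phi
  = \int_{\partial B}u_\phi\cdot\partial_\phi u_t\,d\phi
  = -\int_{\partial B}\partial_\phi^2 u\cdot u_t\,d\phi
  = \int_{\partial B}\partial_\phi^2 u\cdot d\pi_N(u)\partial_r u\,d\phi,
\end{equation*}
after moving one $\partial_\phi$ onto $u$ (no boundary terms on $S^1$). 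The next step is to rewrite $\partial_\phi^2 u$ on $\partial B$ using \eqref{4.1}: since $\Delta u = 0$ we have $u_{\phi\phi} = -(r u_r)_r|_{r=1} = -u_r - u_{rr}$ on $S^1$, so the boundary integrand becomes a combination of $u_r\cdot d\pi_N(u)\partial_r u$ and $u_{rr}\cdot d\pi_N(u)\partial_r u$. The first of these is $|d\pi_N(u)\partial_r u|^2 = |u_t|^2 \ge 0$ and can be dropped (it has the good sign after moving to the other side, or rather: $-\int|u_t|^2 \le 0$).

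The heart of the matter is the term $-\int_{\partial B}u_{rr}\cdot d\pi_N(u)\partial_r u\,d\phi$. Here I would use the orthogonal decomposition \eqref{3.2}: writing $d\pi_N(u)\partial_r u = \partial_r u - \nu(u)\partial_r(dist_N(u))$, the part with $\partial_r u$ gives $-\int u_{rr}\cdot u_r\,d\phi$, which by harmonicity and Fourier expansion is again controlled (it contributes to the good term $\int_B|\nabla u_\phi|^2$; indeed for harmonic $u$, $\int_{\partial B}u_{rr}\cdot u_r\,d\phi$ relates to $\int_B|\nabla u_r|^2$ up to lower-order pieces, cf.\ \eqref{4.2}--\eqref{4.3}). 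The remaining, genuinely nonlinear term is
\begin{equation*}
  \int_{\partial B}u_{rr}\cdot\nu(u)\nu(u)\cdot u_r\,d\phi
  = \sum_i\int_{\partial B}\big(\nu_i(u)\cdot u_{rr}\big)\big(\nu_i(u)\cdot u_r\big)\,d\phi,
\end{equation*}
and the point is that $\nu_i(u)\cdot u_r = \partial_r(dist^i_N(u)) - (d\nu_i(u)u_r)\cdot u$-type corrections; more usefully, $\partial_r(\nu_i(u)\cdot u_r) = \nu_i(u)\cdot u_{rr} + (d\nu_i(u)u_\phi)\cdot u_r$ on $S^1$ after trading the radial derivative of $\nu_i(u)$ for a tangential one via harmonicity/the chain rule along the boundary — this is the classical trick that converts a ``too many radial derivatives'' term into a curvature term $\nabla u\cdot\nabla u$. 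I would then integrate by parts once more in $B$, using equation \eqref{3.5} for $\Delta(dist^i_N(u)) = \nabla u\cdot d\nu_i(u)\nabla u$ and $\nabla(dist^i_N(u)) = \nu_i(u)\cdot\nabla u$, to rewrite $\int_{\partial B}\partial_r(dist^i_N(u))\,\partial_r(\text{something})\,d\phi$ as a bulk integral involving $\Delta(dist^i_N(u))$ times $\nabla(dist^i_N(u))_r$ plus $\int_B\nabla(dist^i_N(u))\cdot\nabla(\dots)_r$; every resulting term carries at least two factors of $\nu_i'(u) = d\nu_i(u)$ or collects into $|\nabla u|^2|u_\phi|^2$, and the one term with $|\nabla u_\phi|$ appears with a small coefficient that can be absorbed into the good term $\int_B|\nabla u_\phi|^2$ on the left, possibly after using Young's inequality and \eqref{4.3}.

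\textbf{Main obstacle.} The delicate bookkeeping is in the term with $u_{rr}$ (equivalently $\partial_\phi^2 u$) paired with the normal part $\nu(u)\nu(u)\cdot u_r$: naively this is a term with two derivatives on $u$ restricted to $\partial B$ multiplied by one derivative, which by trace/Sobolev on $S^1$ alone would not be controllable by $\int_B|\nabla u_\phi|^2$. The resolution — and the step I expect to require the most care — is to keep everything as boundary integrals of a divergence-structure quantity $\partial_r(dist^i_N(u))$ and push it back into $B$ via the divergence theorem exactly as in the proof of Proposition \ref{prop3.1}, so that the second derivative lands on $dist^i_N(u)$, where \eqref{3.5} trades it for $\nabla u\cdot d\nu_i(u)\nabla u$; the harmonicity identities \eqref{4.1}--\eqref{4.3} then let one convert $\|\nabla u_r\|_{L^2(S^1)}$-type quantities into $\|\nabla u_\phi\|_{L^2(B)}$ and absorb. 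Provided the coefficient of the absorbed $\int_B|\nabla u_\phi|^2$ term comes out strictly less than $1$ (which it does, as it is purely geometric, coming from $\|d\nu_i\|_{L^\infty}$ against an $L^2$-$L^2$ split with no smallness needed on the right-hand side — the right-hand side $C\int_B|\nabla u|^2|u_\phi|^2$ is allowed to be large), the lemma follows. I do not expect to need the energy smallness hypothesis here, only the $C^2$-bounds on $dist_N$ and the Cauchy–Schwarz/Young inequalities together with the harmonic trace identities.
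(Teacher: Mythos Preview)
Your route differs from the paper's and takes an unnecessary detour, and the proposal as written contains a genuine error at the crucial step.

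The paper does \emph{not} substitute $u_{\phi\phi}=-u_r-u_{rr}$ from \eqref{4.1}. Instead, after reaching $\int_{\partial B}u_{\phi\phi}\cdot d\pi_N(u)u_r\,d\phi$, it integrates by parts in $\phi$ once more to obtain
\[
-\int_{\partial B}u_{\phi}\cdot u_{r\phi}\,d\phi+\int_{\partial B}u_{\phi}\cdot\partial_{\phi}\big(\nu(u)\,\nu(u)\cdot u_r\big)\,d\phi.
\]
The first term is $-\tfrac12\int_{\partial B}\partial_r|u_\phi|^2=-\int_B|\nabla u_\phi|^2$ by harmonicity of $u_\phi$. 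For the second, the key structural point (which your proposal never invokes) is the orthogonality $u_\phi\cdot\nu_i(u)=0$ on $\partial B$, since $u_\phi$ is tangent to $N$. This kills the piece where $\partial_\phi$ falls on $\nu(u)\cdot u_r$, leaving only the single curvature term $\int_{\partial B}(u_\phi\cdot d\nu(u)u_\phi)(\nu(u)\cdot u_r)$, which is then pushed into $B$ via $\int_{\partial B}u_r\cdot F=\int_B\nabla u\cdot\nabla F$ and estimated by Young's inequality. No absorption of a competing $\int_B|\nabla u_\phi|^2$ term with coefficient close to $1$ is needed; the good term appears with coefficient exactly $1$ and only an $\varepsilon$-fraction is borrowed.

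Your substitution $u_{\phi\phi}=-u_r-u_{rr}$ produces extra terms $\int_{\partial B}|\nu(u)\cdot u_r|^2$ and $\int_{\partial B}u_{rr}\cdot\nu(u)\,\nu(u)\cdot u_r$ that are not directly of the claimed form. More seriously, the identity you write,
\[
\partial_r(\nu_i(u)\cdot u_r)=\nu_i(u)\cdot u_{rr}+(d\nu_i(u)u_\phi)\cdot u_r,
\]
is incorrect: the chain rule gives $(d\nu_i(u)u_r)\cdot u_r$, not $(d\nu_i(u)u_\phi)\cdot u_r$, and there is no way to ``trade the radial derivative of $\nu_i(u)$ for a tangential one via harmonicity'', because $\nu_i(u)$ is not harmonic. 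Your radial route \emph{can} be salvaged, but only by writing $h=dist_N(u)$, using $h_{rr}=\Delta h-h_r-h_{\phi\phi}$ on $\partial B$ together with \eqref{3.5}, and then observing that $h_\phi=\nu(u)\cdot u_\phi=0$ on $\partial B$ makes the $h_{\phi\phi}h_r$ contribution vanish --- which is exactly the orthogonality the paper uses directly. Once you recognize $u_\phi\perp\nu(u)$ as the mechanism, the tangential integration by parts is both shorter and more transparent.
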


\begin{proof} 
Writing $d\pi_N(u)=1-d\pi^{\perp}_N(u)$ with 
\begin{equation*}
     d\pi^{\perp}_N(u)X=\nu(u)\nu(u)\cdot X=\sum_{i=1}^m\nu_i(u)\nu_i(u)\cdot X
\end{equation*}
for any $X\in\R^n$, we compute
\begin{equation*}
 \begin{split}
  \frac12\frac{d}{dt}&\big(\int_{\partial B}|u_{\phi}|^2d\phi\big)
  =\int_{\partial B}u_{\phi}\cdot u_{\phi,t}d\phi
  =-\int_{\partial B}u_{\phi\phi}\cdot u_td\phi\\
  &=\int_{\partial B}u_{\phi\phi}\cdot d\pi_N(u)u_rd\phi
  =-\int_{\partial B}\big(u_{\phi}\cdot u_{r\phi}
  -u_{\phi}\cdot\partial_{\phi}(\nu(u)\,\nu(u)\cdot u_r)\big)d\phi\\
  &=-\frac12\int_{\partial B}\partial_r(|u_{\phi}|^2)d\phi 
  -\int_{\partial B}u_{\phi}\cdot d\nu(u)u_{\phi}\,\nu(u)\cdot u_rd\phi,
 \end{split}
\end{equation*}
where we use orthogonality $u_{\phi}\cdot\nu_i(u)=0$ on $\partial B$, $1\le i\le m$,
in the last step. 
But $u_{\phi}$ is harmonic. So with $\Delta|u_{\phi}|^2=2|\nabla u_{\phi}|^2$, 
from Gauss' theorem we obtain
\begin{equation*}
   \frac12\int_{\partial B}\partial_r(|u_{\phi}|^2)d\phi=\int_B|\nabla u_{\phi}|^2dz.
\end{equation*}
On the other hand, by Young's inequality we can estimate
\begin{equation*}
 \begin{split}
  \int_{\partial B}&u_r\cdot\nu(u)\,u_{\phi}\cdot d\nu(u)u_{\phi}d\phi
  =\int_B\nabla u\cdot\nabla\big(\nu(u)\,u_{\phi}\cdot d\nu(u)u_{\phi}\big)dz\\
  &\le C\int_B|\nabla u_{\phi}||\nabla u||u_{\phi}|dz
  +C\int_B|\nabla u|^2|u_{\phi}|^2dz\\
  &\le\frac12\int_B|\nabla u_{\phi}|^2dz
  +C\int_B|\nabla u|^2|u_{\phi}|^2dz,
 \end{split}
\end{equation*}
and our claim follows.
\end{proof}

Combining the previous result with a quantitative bound for the concentration of 
energy, we obtain a space-time bound for the second derivatives of $u$. 
Note that since $u$ is smooth by assumption,
for any $\delta>0$, any $T<T_0$ there exists a number $R=R(T,u)>0$ such that 
\begin{equation}\label{4.4}
   \sup_{z_0\in B,\,0<t<T}\int_{B_R(z_0)\cap B}|\nabla u(t)|^2dz<\delta.
\end{equation} 

\begin{proposition}\label{prop4.2}
There exist constants $\delta=\delta(N)>0$ and $C>0$ such that for any $T<T_0$ with 
$R>0$ as in \eqref{4.4} there holds 
\begin{equation}\label{4.5}
 \begin{split}
  \sup_{0<t<T}\int_{\partial B}|u_{\phi}(t)|^2d\phi
  &+\int_0^T\int_B|\nabla u_{\phi}|^2dx\,dt\\
  &\le C\int_{\partial B}|u_{0,\phi}|^2d\phi+CTR^{-2}E(u_0).
 \end{split}
\end{equation}
\end{proposition}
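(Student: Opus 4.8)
The plan is to combine the differential inequality of Lemma \ref{lemma4.1} with a localized two-dimensional Ladyzhenskaya (Gagliardo--Nirenberg) estimate for the term $\int_B|\nabla u|^2|u_\phi|^2\,dz$ on its right-hand side, the point being that the no-concentration hypothesis \eqref{4.4} makes the local Dirichlet energy a small factor, which lets us absorb $\int_B|\nabla u_\phi|^2\,dz$ into the left-hand side; a time integration then produces \eqref{4.5}. Concretely, I first fix $\delta=\delta(N)>0$ small enough for the absorption step carried out below, and then take $R>0$ so that \eqref{4.4} holds with this $\delta$.

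For the estimate itself, note that $|\nabla u|^2=|u_r|^2+r^{-2}|u_\phi|^2\ge|u_\phi|^2$ on $B$, so $\int_B|\nabla u|^2|u_\phi|^2\,dz\le\int_B|\nabla u|^4\,dz$, and it suffices to bound the latter. Cover $\bar B$ by balls $B_R(z_i)$, $1\le i\le i_0$, with $i_0\le CR^{-2}$ and bounded overlap; each region $B_R(z_i)\cap B$ --- whether an interior ball or a boundary lune --- is, after rescaling by $R$, a Lipschitz domain with scale-invariant geometry, so the two-dimensional interpolation inequality $\int|v|^4\le C\big(\int|v|^2\big)\big(\int|\nabla v|^2+R^{-2}\int|v|^2\big)$ applies there with $v=\nabla u$ and a uniform constant. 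Since the factor $\int_{B_R(z_i)\cap B}|\nabla u|^2$ is $<\delta$ by \eqref{4.4}, summing over $i$ gives $\int_B|\nabla u|^4\le C\delta\big(\int_B|\nabla^2u|^2\,dz+R^{-2}\int_B|\nabla u|^2\,dz\big)$. Now the harmonicity bound \eqref{4.3} with $k=2$ gives $\int_B|\nabla^2u|^2\,dz\le C\int_B|\nabla u_\phi|^2\,dz$, while $\int_B|\nabla u|^2\,dz=2E(u)\le 2E(u_0)$ by Lemma \ref{lemma2.1}; altogether
\[
   \int_B|\nabla u|^2|u_\phi|^2\,dz\le C\delta\Big(\int_B|\nabla u_\phi|^2\,dz+R^{-2}E(u_0)\Big).
\]

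Inserting this into Lemma \ref{lemma4.1} and choosing $\delta=\delta(N)$ small so that the coefficient of $\int_B|\nabla u_\phi|^2\,dz$ on the right is at most $\frac12$, we absorb that term on the left and obtain
\[
   \frac{d}{dt}\Big(\int_{\partial B}|u_\phi|^2\,d\phi\Big)+\frac12\int_B|\nabla u_\phi|^2\,dz\le CR^{-2}E(u_0).
\]
Integrating this over $(0,t)$ for any $t<T$ and taking the supremum over such $t$ yields \eqref{4.5}. I expect the only genuinely technical step to be the localized Ladyzhenskaya estimate: verifying that it holds with a scale-invariant constant uniformly over the boundary lunes $B_R(z_i)\cap B$, and choosing the covering so that both the overlap constant and $i_0\le CR^{-2}$ are controlled. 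Everything else is routine --- the reduction $\int_B|\nabla^2u|^2\le C\int_B|\nabla u_\phi|^2$ is already provided by \eqref{4.3}, smoothness of $u$ on $\bar B$ for each $t>0$ (with $u_\phi=xu_y-yu_x$) makes all quantities finite, and the absorption and time integration are standard.
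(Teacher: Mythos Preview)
Your proof is correct and follows essentially the same route as the paper's: cover $B$ by balls of radius $\sim R$, apply a localized Ladyzhenskaya inequality to $\nabla u$ on each piece so that the no-concentration hypothesis \eqref{4.4} supplies the small factor $\delta$, convert $\|\nabla^2u\|_{L^2(B)}$ to $\|\nabla u_\phi\|_{L^2(B)}$ via \eqref{4.3}, absorb into the left of Lemma~\ref{lemma4.1}, and integrate in time. The only cosmetic difference is that the paper multiplies by cutoffs $\varphi_{z_i,R}$ and applies the global inequality \eqref{A.2} on $B$ (so the constant is automatically uniform), whereas you apply the interpolation directly on the patches $B_R(z_i)\cap B$ and note---correctly---that the boundary lunes have uniformly bounded Lipschitz geometry.
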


\begin{proof}
For given $T<T_0$ and $\delta>0$ to be determined we fix $R>0$ such 
that \eqref{4.4} holds. Let $B_{R/2}(z_i)$, $1\le i\le i_0$,
be a cover of $B$ such that any point $z_0\in B$ belongs to at most $L$ of the balls 
$B_R(z_i)$, where $L\in\N$ is independent of $R>0$. We then split
\begin{equation*}
   \int_B|\nabla u|^2|u_{\phi}|^2dz
   \le\sum_{i=1}^{i_0}\int_{B_{R/2}(z_i)}|\nabla u|^4dz
   \le\sum_{i=1}^{i_0}\int_B|\nabla(u\varphi_{z_i,R})|^4dz.
\end{equation*}
Using the multiplicative inequality \eqref{A.2} in the Appendix for each $i$ we can bound 
\begin{equation*}
 \int_B|\nabla(u\varphi_{z_i,R})|^4dz
 \le C\delta\int_{B_R(z_i)}\big(|\nabla^2u|^2+R^{-2}|\nabla u|^2\big)dz.
\end{equation*}
Summing over $1\le i\le i_0$, we thus obtain the bound 
\begin{equation*}
 \begin{split}
  \int_B|\nabla u|^2|u_{\phi}|^2dz
  &\le CL\delta\int_B|\nabla^2u|^2dz+CL\delta R^{-2}E(u)\\
  &\le CL\delta\int_B|\nabla u_{\phi}|^2dz+CL\delta R^{-2}E(u_0),
 \end{split}
\end{equation*}
and for sufficiently small $\delta>0$ from Lemma \ref{lemma4.1} we obtain the claim.
\end{proof}

With the help of Proposition \ref{prop4.2} we can now bound $u$ in $H^2(B)$ also 
uniformly in time. For this, we first note the following estimate, which also will 
be useful later for bounding higher order derivatives.

\begin{lemma}\label{lemma4.3}
For any $k\in\N$, with a constant $C>0$ depending only on $k$ and $N$,  
for the solution $u=u(t)$ to \eqref{1.3}, \eqref{1.4} for any $0<t<T_0$ there holds 
\begin{equation*}
 \begin{split}
  \frac{d}{dt}\big(\|\nabla\partial^k_{\phi}&u\|^2_{L^2(B)}\big)
  +\|\partial^k_{\phi}u_r\|^2_{L^2(S^1)}\\
  &\le C\sum_{1\le j_i\le k+1,\,\Sigma_ij_i\le k+2}\|\nabla\partial^k_{\phi}u\|_{L^2(B)}
  \|\Pi_i\nabla^{j_i}u\|_{L^2(B)}.
 \end{split}
\end{equation*}
\end{lemma}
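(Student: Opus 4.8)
The plan is to differentiate the energy-type quantity $\|\nabla\partial_\phi^k u\|_{L^2(B)}^2$ in time and integrate by parts, exactly mirroring the case $k=1$ treated in Lemma \ref{lemma4.1}. First I would note that since $u$ is harmonic, so is $\partial_\phi^k u$, hence $\Delta|\nabla\partial_\phi^{k-1}u|^2 = 2|\nabla\partial_\phi^k u|^2$ after one more $\phi$-derivative; more directly, I will use that $\partial_\phi^k u$ is harmonic so that Gauss' theorem converts a boundary integral of $\partial_r(|\partial_\phi^k u|^2)$ into the bulk term $2\|\nabla\partial_\phi^k u\|_{L^2(B)}^2$. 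Writing
\[
\frac12\frac{d}{dt}\|\nabla\partial_\phi^k u\|_{L^2(B)}^2
= \int_{\partial B}\partial_r(\partial_\phi^k u)\cdot\partial_\phi^k u_t\,d\phi,
\]
I substitute $u_t = -d\pi_N(u)\partial_r u = -\partial_r u + \nu(u)\nu(u)\cdot\partial_r u$ from \eqref{1.3} and \eqref{3.2}. The $-\partial_r u$ contribution gives, after integrating by parts in $\phi$ twice on $\partial B$ and using harmonicity, the good term $-\|\partial_\phi^k u_r\|_{L^2(S^1)}^2$ together with $-\tfrac12\int_{\partial B}\partial_r(|\partial_\phi^k u|^2)\,d\phi = -\|\nabla\partial_\phi^k u\|_{L^2(B)}^2$ (this is where the sign works out, as in Lemma \ref{lemma4.1}).

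The remaining term is $\int_{\partial B}\partial_r\partial_\phi^k u\cdot \partial_\phi^k\big(\nu(u)\nu(u)\cdot\partial_r u\big)\,d\phi$, which I would rewrite as a bulk integral via the divergence theorem: since $\nu_i(u)\cdot\nabla u = \nabla(\mathrm{dist}_N^i(u))$ and $\partial_\phi^k u$ is harmonic, I convert $\int_{\partial B}\partial_r\partial_\phi^k u\cdot(\cdots)\,d\phi$ into $\int_B \nabla\partial_\phi^k u\cdot\nabla\big(\partial_\phi^k(\nu(u)\nu(u)\cdot\partial_r u)\big)\,dz$ (moving the boundary normal derivative into a bulk gradient pairing, using that the other factor extends). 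I then expand $\nabla\partial_\phi^k(\nu(u)\nu(u)\cdot\partial_r u)$ by Leibniz: every term is a product of $\nabla^{j_i}u$ factors with $\sum_i j_i \le k+2$, each $j_i\le k+1$, multiplied by bounded derivatives of $\nu$ evaluated along $u$ (which are $C$-bounded since $N$ is smooth and $u$ maps into the tubular neighborhood). Pairing against $\nabla\partial_\phi^k u$ in $L^2(B)$ and applying Cauchy–Schwarz factor-by-factor yields exactly the asserted sum
\[
C\sum_{1\le j_i\le k+1,\ \sum_i j_i\le k+2}\|\nabla\partial_\phi^k u\|_{L^2(B)}\,\big\|\textstyle\prod_i\nabla^{j_i}u\big\|_{L^2(B)}.
\]
One subtlety: the top-order term where a single $\nabla^{k+2}u$ appears must be Young-absorbed into $\tfrac12\|\nabla\partial_\phi^k u\|_{L^2(B)}^2$ on the left — but $\nabla^{k+2}u$ relates back to $\nabla\partial_\phi^{k+1}u$ only with a loss, so instead I keep the good boundary term $\|\partial_\phi^k u_r\|_{L^2(S^1)}^2$ and the bulk term $\|\nabla\partial_\phi^k u\|_{L^2(B)}^2$ on the left and arrange the Leibniz expansion so that no factor exceeds order $k+1$ (pushing at most one $\partial_r$ and $k$ $\partial_\phi$'s onto $u$, never $k+1$ of a single type that would force order $k+2$ on $u$ after the ambient $\nabla$); the worst factor is then $\nabla^{k+1}u$, which by \eqref{4.3} is controlled by $\|\nabla\partial_\phi^k u\|_{L^2(B)}$, and this is precisely why the right-hand side of the lemma is stated with the factor $\|\nabla\partial_\phi^k u\|_{L^2(B)}$ pulled out.

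The main obstacle I anticipate is purely bookkeeping: correctly organizing the Leibniz expansion of $\partial_\phi^k(\nu(u)\nu(u)\cdot\partial_r u)$ and its gradient so that the index constraints $j_i\le k+1$, $\sum j_i\le k+2$ come out cleanly, while making sure the boundary-to-bulk conversion via harmonicity of $\partial_\phi^k u$ is applied to the right factor. There is no analytic difficulty beyond what appears in Lemma \ref{lemma4.1}; the case $k=1$ is literally this computation with $k=1$, and the general case is the same integration by parts with more terms. I would present the $k=1$ computation as the template, then indicate that for general $k$ one differentiates $k$ times in $\phi$ before the time differentiation, uses orthogonality $u_\phi\cdot\nu_i(u)=0$ on $\partial B$ only at lowest order and carries the commutator terms into the sum, and collects.
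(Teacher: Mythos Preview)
Your overall strategy---differentiate in time, convert to a boundary pairing via harmonicity, substitute \eqref{1.3}, and push the normal-projection term into the bulk by the divergence theorem---matches the paper's. There are, however, two points where your write-up goes wrong, one minor and one substantive.

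The minor point: Lemma~\ref{lemma4.1} differentiates the \emph{boundary} norm $\|u_\phi\|^2_{L^2(\partial B)}$, not the bulk norm $\|\nabla\partial^k_\phi u\|^2_{L^2(B)}$ at stake here; it is the template for Lemma~\ref{lemma4.9}, not for this lemma. In the present computation, after writing $\tfrac12\tfrac{d}{dt}\|\nabla\partial^k_\phi u\|^2_{L^2(B)}=(\partial^k_\phi u_r,\partial^k_\phi u_t)_{L^2(S^1)}$ and inserting $\partial^k_\phi u_t=-\partial^k_\phi u_r+\partial^k_\phi(\nu(u)\nu(u)\cdot u_r)$, the $-\partial^k_\phi u_r$ contribution yields only the single good term $-\|\partial^k_\phi u_r\|^2_{L^2(S^1)}$; your extra ``good'' term $-\|\nabla\partial^k_\phi u\|^2_{L^2(B)}$ simply does not appear (compare \eqref{4.6}).

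The substantive gap is in the remainder $I=(\partial^k_\phi u_r,\partial^k_\phi(\nu(u)\nu(u)\cdot u_r))_{L^2(S^1)}$. Splitting $I=\sum_{j=0}^k\binom{k}{j}I_j$ with $I_j=(\partial^k_\phi u_r,\partial^j_\phi(\nu(u)\nu(u))\,\partial^{k-j}_\phi u_r)_{L^2(S^1)}$, the terms $j\ge 1$ do convert to bulk pairings with factors of order at most $k+1$, as you claim. But for $j=0$ one has $I_0=\|\nu(u)\cdot\partial^k_\phi u_r\|^2_{L^2(S^1)}$, and carrying this to the bulk produces $(\nabla\partial^k_\phi u,\,\nu(u)\nu(u)\cdot\nabla\partial^k_\phi u_r)_{L^2(B)}$, which genuinely contains a factor of order $k+2$ on $u$. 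This cannot be ``arranged away'' by reshuffling Leibniz; the product rule forces it, and your sentence about keeping both good terms on the left and pushing derivatives around does not address it.

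The paper's remedy is the point you mention in passing but do not actually exploit: one rewrites $\nu(u)\cdot\partial^k_\phi u_r=\partial^k_\phi(dist_N(u))_r+(\text{commutators of order}\le k+1)$, so that the dangerous part of $I_0$ becomes a bulk pairing involving $\nabla^2\partial^k_\phi(dist_N(u))$. Now $dist_N(u)\in H^1_0(B)$ satisfies \eqref{3.5}, namely $\Delta(dist_N(u))=\nabla u\cdot d\nu(u)\nabla u$, and elliptic regularity gives
\[
\|\nabla^{k+2}(dist_N(u))\|_{L^2(B)}\le C\|\nabla u\cdot d\nu(u)\nabla u\|_{H^k(B)}.
\]
The right side is $k$ derivatives of a \emph{quadratic} expression in $\nabla u$, hence a sum of products $\prod_i\nabla^{j_i}u$ with each $j_i\le k+1$ and $\sum_i j_i\le k+2$. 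That is precisely how the constraint $j_i\le k+1$ in the lemma's right-hand side is obtained; without invoking \eqref{3.5} and the Dirichlet boundary condition on $dist_N(u)$, the estimate does not close at top order.
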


\begin{proof}
For any $k\in\N$ we use harmonicity of $\partial^{2k}_{\phi}u$ to compute
\begin{equation}\label{4.6}
 \begin{split}
 \frac12&\frac{d}{dt}\big(\|\nabla\partial^k_{\phi}u\|^2_{L^2(B)}\big)
 =(-1)^k\int_B\nabla\partial^{2k}_{\phi}u\nabla u_t\;dx\\
 &=(-1)^k(\partial^{2k}_{\phi}u_r,u_t)_{L^2(S^1)}
 =(-1)^{k+1}(\partial^{2k}_{\phi}u_r,d\pi_N(u)u_r)_{L^2(S^1)}\\
 &=-(\partial^k_{\phi}u_r,\partial^k_{\phi}u_r)_{L^2(S^1)}
 +(\partial^k_{\phi}u_r,\partial^k_{\phi}(\nu(u)\,\nu(u)\cdot u_r))_{L^2(S^1)}\\
 &=-\|\partial^k_{\phi}u_r\|^2_{L^2(S^1)}+I,
 \end{split}
\end{equation}
where we split $I=\sum_{j=0}^k\Big({k\atop j}\Big)I_j$ with
\begin{equation*}
 \begin{split}
   I_j&=(\partial^k_{\phi}u_r,\partial^j_{\phi}(\nu(u)\,\nu(u))
   \partial^{k-j}_{\phi}u_r)_{L^2(S^1)}\\
   &=(\nabla\partial^k_{\phi}u,\nabla(\partial^j_{\phi}(\nu(u)\nu(u))
   \cdot\partial^{k-j}_{\phi}u_r))_{L^2(B)}.
 \end{split}
\end{equation*}
Hence for any $1\le j\le k$ we can bound
\begin{equation*}
 \begin{split}
 |I_j|&\le C\sum_{0\le i\le j}\|\nabla\partial^k_{\phi}u\|_{L^2(B)}
 \|\nabla\partial^{j-i}_{\phi}\nu(u)\partial^{i}_{\phi}\nu(u)
 \partial^{k-j}_{\phi}u_r\|_{L^2(B)}\\
 &+C\sum_{0\le i\le j}\|\nabla\partial^k_{\phi}u\|_{L^2(B)}
 \|\partial^{j-i}_{\phi}\nu(u)\partial^i_{\phi}\nu(u)
 \nabla\partial^{k-j}_{\phi}u_r\|_{L^2(B)}\\
 &\le C\sum_{1\le j_i\le k+1,\, \Sigma_ij_i=k+2}\|\nabla\partial^k_{\phi}u\|_{L^2(B)}
 \|\Pi_i\nabla^{j_i}u\|_{L^2(B)},
 \end{split}
\end{equation*}
as claimed. It remains to bound the term 
$I_0=\|\partial^k_{\phi}u_r\cdot\nu(u)\|^2_{L^2(S^1)}$.
With the signed distance function we can express
\begin{equation*}
  \nu(u)\cdot u_{\phi r}=\big(\nu(u)\cdot u_r\big)_{\phi}-u_r\cdot d\nu(u)u_{\phi}
  =(dist_N(u))_{\phi r}-u_r\cdot d\nu(u)u_{\phi},
\end{equation*}
so that 
\begin{equation*}
 \begin{split}
 I_0=\|\partial^k_{\phi}u_r&\cdot\nu(u)\|^2_{L^2(S^1)}
 =\big(\partial^k_{\phi}u_r\cdot\nu(u),\partial^k_{\phi}(dist_N(u))_r\big)_{L^2(S^1)}+II\\
 &=\big(\nabla\partial^k_{\phi}u,
 \nabla\big(\nu(u)\partial^k_{\phi}(dist_N(u))_r\big)\big)_{L^2(B)}+II,
 \end{split}
\end{equation*}
where all terms in $II$ can be dealt with as in the case $1\le j\le k$. Finally,
we have 
\begin{equation*}
 \begin{split}
 \big(\nabla&\partial^k_{\phi}u,
 \nabla\big(\nu(u)\partial^k_{\phi}(dist_N(u))_r\big)\big)_{L^2(B)}\\
 &\le\|\nabla\partial^k_{\phi}u\|_{L^2(B)}
 \big(\|\nabla^2\partial^k_{\phi}(dist_N(u))\|_{L^2(B)}
 +\|\nabla\nu(u)\partial^k_{\phi}(dist_N(u))_r\|_{L^2(B)}\big).
\end{split}
\end{equation*}
But by the chain rule we can bound
\begin{equation*}
 \begin{split}
  \|\nabla\nu(u)\partial^k_{\phi}&(dist_N(u))_r\|_{L^2(B)}\big)
  \le C\|\nabla u\nabla^{k+1}(dist_N(u))\|_{L^2(B)}\big)\\
  &\le C\sum_{1\le j_i\le k+1,\,\Sigma_ij_i=k+2}\|\Pi_i\nabla^{j_i}u\|_{L^2(B)}.
\end{split}
\end{equation*}
Moreover, by \eqref{3.5} and elliptic regularity theory, there holds
\begin{equation*}
 \begin{split}
  \|\nabla^{k+2}&(dist_N(u))\|^2_{L^2(B)}\le C\|\Delta(dist_N(u))\|^2_{H^k(B)}
  \le C\|\nabla u\cdot d\nu_i(u)\nabla u\|^2_{H^k((B)}\\
  &\le C\sum_{1\le j_i\le k+1,\,\Sigma_ij_i\le k+2}\|\Pi_i\nabla^{j_i}u\|_{L^2(B)},
 \end{split}
\end{equation*}
which gives the claim.
\end{proof}

For $k=1$, from Proposition \ref{prop4.2} we now easily derive a uniform $L^2$-bound for
the second derivatives of the flow.

\begin{proposition}\label{prop4.4}
For any smooth $u_0\in H^{1/2}(S^1;N)$ and any $T<T_0$ with $R>0$ as in 
Proposition \ref{prop4.2} with a constant $C_1=C_1(T,R,u_0)>0$ depending on the 
right hand side of \eqref{4.5} there holds 
\begin{equation*}
 \begin{split}
  \sup_{0<t<T}&\int_B|\nabla u_{\phi}(t)|^2dz
  +\int_0^{T}\int_{\partial B}|u_{\phi r}|^2d\phi\,dt
  \le C_1\int_B|\nabla u_{0,\phi}|^2dz+C_1.
 \end{split}
\end{equation*}
\end{proposition}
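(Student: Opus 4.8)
The plan is to run a Gronwall argument on the quantity $y(t)=\int_B|\nabla u_\phi(t)|^2\,dz$, starting from Lemma~\ref{lemma4.3} with $k=1$, which reads
\begin{equation*}
  \frac{d}{dt}y(t)+\|u_{\phi r}(t)\|^2_{L^2(S^1)}
  \le C\!\!\!\sum_{1\le j_i\le 2,\ \Sigma_ij_i\le 3}\!\!\!\|\nabla u_\phi\|_{L^2(B)}\,\|\Pi_i\nabla^{j_i}u\|_{L^2(B)};
\end{equation*}
the products $\Pi_i\nabla^{j_i}u$ that occur reduce to $\nabla u$, $\nabla^2u$, $|\nabla u|^2$, $\nabla u\cdot\nabla^2u$ and $|\nabla u|^3$. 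I would first collect the tools: the energy bound $\|\nabla u(t)\|^2_{L^2(B)}=2E(u(t))\le 2E(u_0)$ (Lemma~\ref{lemma2.1}); the identity \eqref{4.3}, so that $\|\nabla^2u(t)\|_{L^2(B)}\le C\,y(t)^{1/2}$; Proposition~\ref{prop4.2}, which (writing $C_0$ for the right-hand side of \eqref{4.5}) gives $\sup_{0<t<T}\|u_\phi(t)\|^2_{L^2(S^1)}\le C_0$ and $\int_0^Ty\,dt\le C_0$; and the observation that $\nabla u$ and $\nabla^2u$ are componentwise harmonic, so that the boundedness of the harmonic extension $L^2(S^1)\to H^{1/2}(B)$ together with the Sobolev embedding $H^{1/2}(B)\hookrightarrow L^4(B)$ gives $\|w\|_{L^4(B)}\le C\|w\|_{L^2(S^1)}$ for $w\in\{\nabla u,\nabla^2u\}$. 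Finally, Fourier expansion shows $\|u_{\phi r}\|^2_{L^2(S^1)}\ge y$, so the boundary term on the left is coercive.

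The term to watch is $\|\nabla u_\phi\|_{L^2(B)}\,\|\nabla u\cdot\nabla^2u\|_{L^2(B)}$, and I would handle it by harmonicity rather than by localization, since localizing $\nabla^2u$ would introduce uncontrolled third derivatives. Estimate $\|\nabla u\cdot\nabla^2u\|_{L^2(B)}\le\|\nabla u\|_{L^4(B)}\|\nabla^2u\|_{L^4(B)}\le C\|\nabla u\|_{L^2(S^1)}\|\nabla^2u\|_{L^2(S^1)}$. Here $\|\nabla u\|_{L^2(S^1)}=\sqrt2\,\|u_\phi\|_{L^2(S^1)}\le\sqrt{2C_0}$ by \eqref{3.7} and Proposition~\ref{prop4.2}; and, writing $\Delta u=0$ in polar coordinates \eqref{4.1} at $r=1$ (so that $u_{rr}=-u_r-u_{\phi\phi}$) and using \eqref{3.7} for the harmonic function $u_\phi$ (so that $\|u_{\phi\phi}\|_{L^2(S^1)}=\|u_{\phi r}\|_{L^2(S^1)}$ and $\|u_r\|_{L^2(S^1)}=\|u_\phi\|_{L^2(S^1)}$), one obtains $|\nabla^2u|^2\le C(u_r^2+u_\phi^2+u_{\phi\phi}^2+u_{\phi r}^2)$ on $S^1$ and hence $\|\nabla^2u\|_{L^2(S^1)}\le C\big(\|u_{\phi r}\|_{L^2(S^1)}+\sqrt{C_0}\big)$. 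Multiplying by $\|\nabla u_\phi\|_{L^2(B)}=y^{1/2}$ and using Young's inequality, this term is at most $\tfrac14\|u_{\phi r}\|^2_{L^2(S^1)}+C\,C_0\,y+C\,C_0$. The $\nabla u$, $\nabla^2u$ and $|\nabla u|^2$ terms are dominated the same way (using in addition $\|\nabla^2u\|_{L^2(B)}\le Cy^{1/2}$), contributing at most $\tfrac14\|u_{\phi r}\|^2_{L^2(S^1)}+C(C_0+1)(1+y)$.

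The cubic term $\|\nabla u_\phi\|_{L^2(B)}\,\||\nabla u|^3\|_{L^2(B)}$ is the one place where I would instead borrow the small-energy localization from the proof of Proposition~\ref{prop4.2} (a purely harmonic-function estimate here only yields a bound superlinear in $y$). Covering $B$ by balls $B_{R/2}(z_i)$ of bounded overlap, applying the multiplicative inequality of the Appendix to $\nabla\big((u-\bar u_{z_i,R})\varphi_{z_i,R}\big)$ on each ball together with the small-energy condition \eqref{4.4} and Poincar\'e's inequality, one gets $\int_B|\nabla u|^6\,dz\le C(R)\big(\|\nabla^2u\|^2_{L^2(B)}+1\big)\le C(R)(1+y)$, hence $\||\nabla u|^3\|_{L^2(B)}\le C(R)(1+y^{1/2})$, so that after multiplying by $y^{1/2}$ and using Young this contributes at most $C(R)(1+y)$. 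Collecting all contributions and absorbing the boundary terms into the coercive term on the left gives a differential inequality
\begin{equation*}
  \frac{d}{dt}y(t)+\tfrac12\|u_{\phi r}(t)\|^2_{L^2(S^1)}\le C_2\,y(t)+C_2,
\end{equation*}
with $C_2$ depending only on $N$, $R$ and $C_0$.

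Gronwall's inequality then yields $\sup_{0<t<T}y(t)\le e^{C_2T}\big(y(0)+C_2T\big)$, which is of the asserted form $C_1\int_B|\nabla u_{0,\phi}|^2\,dz+C_1$ since $y(0)=\int_B|\nabla u_{0,\phi}|^2\,dz$; integrating the displayed differential inequality over $[0,T]$ and inserting this bound gives $\int_0^T\!\int_{\partial B}|u_{\phi r}|^2\,d\phi\,dt\le C_1$ as well. Since $C_0$ is the right-hand side of \eqref{4.5}, the constant $C_1$ depends on $T$, $R$ and $u_0$ only, as claimed. The main obstacle, and the reason the estimate is not entirely routine, is precisely the $\nabla u\cdot\nabla^2u$ term: it has to be made linear in $y$ modulo a term absorbed into $\|u_{\phi r}\|^2_{L^2(S^1)}$, and the mechanism for that is that $\nabla^2u$, being harmonic, has its $L^4(B)$-norm controlled by its trace in $L^2(S^1)$, which via $\Delta u=0$ is in turn controlled by the very boundary quantity $\|u_{\phi r}\|_{L^2(S^1)}$ appearing coercively on the left, plus a term controlled by Proposition~\ref{prop4.2}.
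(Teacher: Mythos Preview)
Your approach is correct in spirit and is a genuine alternative to the paper's. The paper controls both the mixed term $|\nabla^2u||\nabla u|$ and the cubic term $|\nabla u|^3$ through $\|\nabla u\|_{L^\infty(B)}$, bounded via the maximum principle and the Sobolev embedding $H^1(\partial B)\hookrightarrow L^\infty(\partial B)$ by $C(\|u_{\phi r}\|_{L^2(S^1)}+C_1)$; this produces a differential inequality of the schematic form $y'\le C(1+y)(y+C_1)$, which the paper closes by a \emph{logarithmic} Gronwall argument, $\frac{d}{dt}\log(1+y)\le Cy+C_1$, using the $L^1_t$ bound on $y$ from Proposition~\ref{prop4.2}. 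You instead bound $\|\nabla^2u\|_{L^4(B)}$ by its boundary $L^2$-trace via harmonicity and $H^{1/2}(B)\hookrightarrow L^4(B)$, and use directly the $L^\infty_t$ bound on $\|u_\phi\|_{L^2(S^1)}$ from Proposition~\ref{prop4.2} to kill one factor in the mixed term; this yields the cleaner linear inequality $y'+\tfrac12\|u_{\phi r}\|^2_{L^2(S^1)}\le C_2(1+y)$ and a standard Gronwall. Both routes are valid; yours is arguably more direct, while the paper's $L^\infty$ mechanism is the one that iterates more naturally in the higher-order Propositions~\ref{prop4.5}--\ref{prop4.6}.

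There is one genuine gap: your treatment of the cubic term. The ``multiplicative inequality of the Appendix'' is an $L^4$ estimate, and applying it (even after localizing with cutoffs and Poincar\'e) to $\nabla\big((u-\bar u_{z_i,R})\varphi_{z_i,R}\big)$ does not yield an $L^6$ bound on $\nabla u$; the standard two-dimensional Gagliardo--Nirenberg scaling gives $\|\nabla u\|_{L^6}^6\le C\|\nabla u\|_{H^1}^4\|\nabla u\|_{L^2}^2$, which after summing over the cover is quadratic in $y$, not linear. The fix is immediate and fits your own framework: use instead the interpolation $\|\nabla u\|_{L^6(B)}^6\le C\|\nabla u\|_{H^1(B)}^2\|\nabla u\|_{L^4(B)}^4$ together with your harmonic bound $\|\nabla u\|_{L^4(B)}\le C\|\nabla u\|_{L^2(S^1)}\le C\sqrt{C_0}$, giving $\||\nabla u|^3\|_{L^2(B)}^2\le CC_0^2(1+y)$ and hence a contribution $\le C(1+y)$ after multiplying by $y^{1/2}$; alternatively, the paper's route $\|\nabla u\|_{L^6}^3\le\|\nabla u\|_{L^4}^2\|\nabla u\|_{L^\infty}$ works here too. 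With this correction your argument goes through.
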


\begin{proof}
For $k=1$ by Lemma \ref{lemma4.3} we need to bound the term
\begin{equation*}
 \begin{split}
 J=\sum_{1\le j_i\le 2,\, \Sigma_ij_i\le 3}\|\Pi_i\nabla^{j_i}u\|_{L^2(B)}
 \le C\||\nabla^2 u||\nabla u|+|\nabla u|^3\|_{L^2(B)}+J_1,
 \end{split}
\end{equation*}
where $J_1$ contains all terms of lower order.
By the maximum principle and Sobolev's embedding 
$H^1(\partial B)\hookrightarrow L^{\infty}(\partial B)$ we can estimate
\begin{equation*}
 \begin{split}
 \|\nabla u\|^2_{L^{\infty}(B)}\le\|\nabla u\|^2_{L^{\infty}(\partial B)}
 \le C\|\nabla u\|^2_{H^1(\partial B)}
 \le C\|u_{\phi r}\|^2_{L^2(\partial B)}+C_1, 
 \end{split}
\end{equation*}
where we have also used \eqref{3.7} and Proposition \ref{prop4.2}. Also bounding 
\begin{equation*}
 \begin{split}
 \|\nabla u\|^3_{L^6(B)}&\le\|\nabla u\|^2_{L^4(B)}\|\nabla u\|_{L^{\infty}(B)}\\
 &\le C\big(\|\nabla^2 u\|_{L^2(B)}\|\nabla u\|_{L^2(B)}+E(u)\big)\|\nabla u\|_{L^{\infty}(B)}
 \end{split}
\end{equation*}
via \eqref{A.2}, and again using \eqref{3.7} 
(and with similar, but simpler bounds for $J_1$), we arrive at the estimate
\begin{equation*}
 \begin{split}
 J&\le C\||\nabla^2 u||\nabla u|+|\nabla u|^3\|_{L^2(B)}+C_1\\
 &\le C\big(\|\nabla^2 u\|_{L^2(B)}+E(u)\big)\|\nabla u\|_{L^{\infty}(B)}+C_1\\
 &\le C\big(1+\|\nabla u_{\phi}\|_{L^2(B)}+E(u_0)\big)
 \big(\|u_{\phi r}\|_{L^2(\partial B)}+C_1\big). 
 \end{split}
\end{equation*}
With Lemma \ref{lemma4.3} and Young's inequality we then have
\begin{equation}\label{4.7}
 \begin{split}
  \frac{d}{dt}&\big(1+\|\nabla u_{\phi}\|^2_{L^2(B)}\big)+\|u_{\phi r}\|^2_{L^2(S^1)}\\
  &\le C\|\nabla u_{\phi}\|_{L^2(B)}\big(\|\nabla u_{\phi}\|_{L^2(B)}+E(u_0)\big)
 \big(\|u_{\phi r}\|_{L^2(\partial B)}+C_1\big)\\
  &\le\frac12\|u_{\phi r}\|^2_{L^2(\partial B)}
  +C(1+\|\nabla u_{\phi}\|^2_{L^2(B)})\big(\|\nabla u_{\phi}\|^2_{L^2(B)}+C_1\big).
 \end{split}
\end{equation}
Absorbing the first term on the right on the left hand side of this inequality and
dividing by $1+\|\nabla u_{\phi}\|^2_{L^2(B)}$ we obtain 
\begin{equation*}
 \begin{split}
  \frac{d}{dt}&\big(\log\big(1+\|\nabla u_{\phi}\|^2_{L^2(B)}\big)\big)
  \le C\|\nabla u_{\phi}\|^2_{L^2(B)}+C_1,
 \end{split}
\end{equation*}
and from Proposition \ref{prop4.2} we obtain the bound 
\begin{equation*}
 \begin{split}
   \sup_{0<t<T}&\|\nabla u_{\phi}(t)\|^2_{L^2(B)}\le C_1(1+\|\nabla u_{0,\phi}\|^2_{L^2(B)}).
 \end{split}
\end{equation*}
The claim then follows from \eqref{4.7}.
\end{proof}

\subsection{$H^3$-bounds}
The derivation of a-priori $L^2$-bounds for third derivatives of
the solution $u$ to the flow \eqref{1.3}, \eqref{1.4} requires special care, which 
is why we highlight this case. 

\begin{proposition}\label{prop4.5}
For any smooth $u_0\in H^{1/2}(S^1;N)$ and any $T<T_0$ there holds 
\begin{equation*}
  \sup_{0<t<T}\int_B|\nabla u_{\phi\phi}(t)|^2dz
  +\int_0^T\int_{\partial B}|u_{\phi\phi r}|^2d\phi\,dt
  \le C_2\int_B|\nabla u_{0,\phi\phi}|^2dz+C_2,
\end{equation*}
where we denote as $C_2=C_2(T,R,u_0)>0$ a constant bounded by the terms on the right hand 
side in the statements of Propositions \ref{prop4.2} and \ref{prop4.4}.
\end{proposition}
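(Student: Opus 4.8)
The plan is to proceed exactly as in the proof of Proposition \ref{prop4.4}, but now carefully tracking the new structure that appears at the level of three $\phi$-derivatives. First I would apply Lemma \ref{lemma4.3} with $k=2$, which gives
\begin{equation*}
  \frac{d}{dt}\big(\|\nabla u_{\phi\phi}\|^2_{L^2(B)}\big)+\|u_{\phi\phi r}\|^2_{L^2(S^1)}
  \le C\sum_{1\le j_i\le 3,\,\Sigma_ij_i\le 4}\|\nabla u_{\phi\phi}\|_{L^2(B)}\|\Pi_i\nabla^{j_i}u\|_{L^2(B)},
\end{equation*}
so that, abbreviating the sum as $J$, the task is to bound $J$ by $\tfrac12\|u_{\phi\phi r}\|^2_{L^2(S^1)}$ plus a term controllable by the quantities already estimated in Propositions \ref{prop4.2} and \ref{prop4.4}. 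The worst monomials in $\Pi_i\nabla^{j_i}u$ are the ones saturating $\Sigma_ij_i=4$, namely $|\nabla^3u||\nabla u|$, $|\nabla^2u|^2$, $|\nabla^2u||\nabla u|^2$, and $|\nabla u|^4$; all remaining terms are of strictly lower order and handled by the same but easier estimates, using \eqref{3.7}, \eqref{4.2}, \eqref{4.3}.

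The key interpolation inputs are the following. From the maximum principle together with $H^1(\partial B)\hookrightarrow L^\infty(\partial B)$, \eqref{3.7} and Proposition \ref{prop4.4} one gets $\|\nabla u\|^2_{L^\infty(B)}\le C\|u_{\phi r}\|^2_{L^2(\partial B)}+C_1\le C_1$ uniformly in $t$, so $\nabla u$ is in fact bounded in $L^\infty$ once we are below the first $H^2$-bound. Using this and the multiplicative inequality \eqref{A.2} from the Appendix, one bounds $\|\nabla^2u\|^2_{L^4(B)}\le C\|\nabla^3u\|_{L^2(B)}\|\nabla^2u\|_{L^2(B)}+\text{l.o.t.}$ and hence
\begin{equation*}
  \||\nabla^3u||\nabla u|\|_{L^2(B)}\le C_1\|\nabla^3u\|_{L^2(B)},\quad
  \||\nabla^2u|^2\|_{L^2(B)}\le C\|\nabla^3u\|_{L^2(B)}\|\nabla^2u\|_{L^2(B)}+C_1,
\end{equation*}
while the cubic and quartic terms are even better since $\nabla u\in L^\infty$ and $\nabla^2u\in L^4$. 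Together with the harmonic identities \eqref{4.2}--\eqref{4.3}, which let us replace $\|\nabla^3u\|_{L^2(B)}$ by $C\|\nabla u_{\phi\phi}\|_{L^2(B)}$ and $\|\nabla^3u\|_{L^2(S^1)}$ by $C\|u_{\phi\phi r}\|_{L^2(S^1)}$, one arrives at
\begin{equation*}
  J\le C\big(1+\|\nabla u_{\phi\phi}\|_{L^2(B)}\big)\|\nabla u_{\phi\phi}\|_{L^2(B)}\big(\|u_{\phi\phi r}\|_{L^2(\partial B)}+C_2\big),
\end{equation*}
where I absorb into $C_2$ everything bounded in terms of $E(u_0)$, Proposition \ref{prop4.2} and Proposition \ref{prop4.4}.

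From here the argument is the logarithmic Gronwall trick of Proposition \ref{prop4.4} verbatim: apply Young's inequality to absorb $\tfrac12\|u_{\phi\phi r}\|^2_{L^2(\partial B)}$ on the left, obtaining
\begin{equation*}
  \frac{d}{dt}\big(1+\|\nabla u_{\phi\phi}\|^2_{L^2(B)}\big)+\|u_{\phi\phi r}\|^2_{L^2(S^1)}
  \le C\big(1+\|\nabla u_{\phi\phi}\|^2_{L^2(B)}\big)\big(\|\nabla u_{\phi\phi}\|^2_{L^2(B)}+C_2\big),
\end{equation*}
divide by $1+\|\nabla u_{\phi\phi}\|^2_{L^2(B)}$ to control $\frac{d}{dt}\log(1+\|\nabla u_{\phi\phi}\|^2_{L^2(B)})$ by $C\|\nabla u_{\phi\phi}\|^2_{L^2(B)}+C_2$, and integrate using the space-time bound $\int_0^T\|\nabla u_{\phi\phi}\|^2_{L^2(B)}\,dt\le C_2$ — which itself follows from Proposition \ref{prop4.4} after one more application of Lemma \ref{lemma4.3} (or directly from \eqref{4.3} and the $\int_0^T\int_B|\nabla u_{\phi r}|^2$ bound). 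This yields $\sup_{0<t<T}\|\nabla u_{\phi\phi}(t)\|^2_{L^2(B)}\le C_2(1+\|\nabla u_{0,\phi\phi}\|^2_{L^2(B)})$, and feeding this back into the differential inequality and integrating in time gives the stated space-time bound on $u_{\phi\phi r}$ as well.

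The main obstacle, as flagged by the authors' remark that "this case requires special care", is the borderline term $\||\nabla^2u|^2\|_{L^2(B)}$: naively it is quadratic in the top-order quantity $\|\nabla^2u\|_{L^4(B)}$, and one must split it as $\|\nabla^2u\|_{L^4(B)}^2$ and use \eqref{A.2} to trade one factor of $\|\nabla^2u\|_{L^4(B)}$ for $\|\nabla^3u\|_{L^2(B)}^{1/2}\|\nabla^2u\|_{L^2(B)}^{1/2}$, so that the product with $\|\nabla u_{\phi\phi}\|_{L^2(B)}\sim\|\nabla^3u\|_{L^2(B)}$ stays at most quadratic-times-logarithmic in $\|\nabla^3u\|_{L^2(B)}$ and can be closed by the log-Gronwall argument rather than producing a genuine $\|\nabla^3u\|^2$ that would need absorption it cannot afford. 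The $L^\infty$-bound on $\nabla u$ coming from the already-established $H^2$-estimate is what makes this work; without it the cubic term $|\nabla^2u||\nabla u|^2$ would be uncontrollable at this level.
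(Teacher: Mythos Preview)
There is a genuine gap. Your key claim that $\|\nabla u\|_{L^\infty(B)}\le C_1$ \emph{uniformly in $t$} does not follow from Proposition~\ref{prop4.4}. That proposition yields
\[
\sup_{0<t<T}\|\nabla u_\phi(t)\|^2_{L^2(B)}\le C_1
\quad\text{and}\quad
\int_0^T\|u_{\phi r}\|^2_{L^2(\partial B)}\,dt\le C_1,
\]
but \emph{not} $\sup_t\|u_{\phi r}\|^2_{L^2(\partial B)}\le C_1$; the boundary quantity is half a derivative stronger than the bulk one (in Fourier, $\sum k^4|a_k|^2$ versus $\sum k^3|a_k|^2$). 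So all you have pointwise in time is $u(t)\in H^2(B)$, and in two dimensions $H^2(B)\not\hookrightarrow W^{1,\infty}(B)$ --- this is exactly the borderline case. Consequently the estimates $\||\nabla^3 u||\nabla u|\|_{L^2}\le C_1\|\nabla^3 u\|_{L^2}$ and the handling of $|\nabla^2 u||\nabla u|^2$, $|\nabla u|^4$ all break down, and your closing differential inequality is not available.

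This is precisely the ``special care'' the paper refers to. The paper's remedy is the Brezis--Gallouet inequality
\[
\|\nabla u\|^2_{L^\infty(B)}\le C\|\nabla u\|^2_{H^1(B)}\big(1+\log(1+\|\nabla u\|_{H^2(B)}/\|\nabla u\|_{H^1(B)})\big)
\le C_2\big(1+\log(1+\|\nabla^3 u\|_{L^2(B)})\big),
\]
which feeds the logarithm of the top-order norm back into $J$. One then obtains the self-closing inequality
\[
\frac{d}{dt}\big(1+\log(1+\|\nabla u_{\phi\phi}\|_{L^2(B)})\big)
\le C_2\big(1+\log(1+\|\nabla u_{\phi\phi}\|_{L^2(B)})\big),
\]
a linear Gronwall for $\log(1+\|\nabla u_{\phi\phi}\|)$ with constant right-hand side --- no a~priori space-time bound on $\|\nabla u_{\phi\phi}\|^2_{L^2(B)}$ is needed. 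Your proposed route instead relies on $\int_0^T\|\nabla u_{\phi\phi}\|^2_{L^2(B)}\,dt\le C_2$, which also does \emph{not} follow from Proposition~\ref{prop4.4}: that proposition controls $\int_0^T\|u_{\phi r}\|^2_{L^2(\partial B)}\,dt$, again half a derivative short of $\int_0^T\|\nabla u_{\phi\phi}\|^2_{L^2(B)}\,dt$.
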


\begin{proof}
For $k=2$ by Lemma \ref{lemma4.3} we need to bound the term
\begin{equation*}
 \begin{split}
 J&=\sum_{1\le j_i\le 3,\, \Sigma_ij_i=4}
 \|\Pi_i\nabla^{j_i}u\|_{L^2(B)}\\
 &\le C\||\nabla u|^4+|\nabla u|^2|\nabla^2 u|+|\nabla^2 u|^2+|\nabla u||\nabla^3 u|\|_{L^2(B)}
 \end{split}
\end{equation*}
and corresponding terms involving at most $3$ derivatives in total, which we will omit.

In dealing with the first term, 
by the multiplicative inequality \eqref{A.2} and Sobolev's embedding 
$H^2(B)\hookrightarrow L^{\infty}(B)$ we can estimate
\begin{equation*}
 \begin{split}
 \|\nabla u\|^4_{L^8(B)}&\le\|\nabla u\|^2_{L^4(B)}\|\nabla u\|^2_{L^{\infty}(B)}
 \le C\|\nabla u\|_{H^1(B)}\|\nabla u\|_{L^2(B)}\|\nabla u\|^2_{L^{\infty}(B)}\\
 &\le C(\|\nabla^2 u\|^2_{L^2(B)}+E(u))\|\nabla u\|^2_{L^{\infty}(B)}
 \le C_2\|\nabla u\|^2_{L^{\infty}(B)}\\
 &\le C_2(\|\nabla^3 u\|_{L^2(B)}+\|\nabla u\|_{L^2(B)})\|\nabla u\|_{L^{\infty}(B)}
 \end{split}
\end{equation*}
with a constant $C_2=C_2(T,R,u_0)>0$ as in the statement of the proposition.
Similarly there holds 
\begin{equation*}
 \begin{split}
 \|\nabla^2& u\|^2_{L^4(B)}\le C\|\nabla^2 u\|_{H^1(B)}\|\nabla^2 u\|_{L^2(B)}\\
 &\le\|\nabla^3 u\|_{L^2(B)}\|\nabla^2 u\|_{L^2(B)}+\|\nabla^2 u\|^2_{L^2(B)}
 \le C_2(1+\|\nabla^3 u\|_{L^2(B)}).
 \end{split}
\end{equation*}
Hence we can also bound
\begin{equation*}
 \begin{split}
 \||\nabla u|^2&|\nabla^2 u|\|_{L^2(B)}\le\|\nabla u\|^4_{L^8(B)}+\|\nabla^2 u\|^2_{L^4(B)}\\
 &\le C_2(1+\|\nabla^3 u\|_{L^2(B)})(1+\|\nabla u\|_{L^{\infty}(B)}).
 \end{split}
\end{equation*}
Finally, we estimate
\begin{equation*}
 \begin{split}
 \||\nabla u||\nabla^3 u|\|_{L^2(B)}
 \le\|\nabla^3 u\|_{L^2(B)}\|\nabla u\|_{L^{\infty}(B)}
 \end{split}
\end{equation*}
to obtain 
\begin{equation*}
 J\le C_2(1+\|\nabla^3 u\|_{L^2(B)})(1+\|\nabla u\|_{L^{\infty}(B)}).
\end{equation*}

But with the inequality
\begin{equation*}
 \|f\|_{L^{\infty}(B)}\le C\|f\|_{H^1(B)}(1+\log^{1/2}(1+\|f\|_{H^2(B)}/\|f\|_{H^1(B)})
\end{equation*}
for $f\in H^2(B)$ due to Brezis-Gallouet \cite{Brezis-Gallouet-1980}
(see also Brezis-Wainger \cite{Brezis-Wainger-1980} for a more general version) 
we have
\begin{equation*}
 \begin{split}
 &\|\nabla u\|^2_{L^{\infty}(B)}
 \le C\|\nabla u\|^2_{H^1(B)}\big(1+\log(1+\|\nabla u\|_{H^2(B)}/\|\nabla u\|_{H^1(B)})\big)\\
 &\quad\le C_2(1+\log(1+\|\nabla^3 u\|_{L^2(B)})),
 \end{split}
\end{equation*}
and Lemma \ref{lemma4.3} yields the differential inequality 
\begin{equation*}
 \begin{split}
  \frac{d}{dt}&\big(\|\nabla\partial^2_{\phi}u\|^2_{L^2(B)}\big)
  +\|u_{\phi\phi r}\|^2_{L^2(\partial B)}\\
  &\le C_2\|\nabla\partial^2_{\phi}u\|_{L^2(B)}
  (1+\|\nabla^3 u\|_{L^2(B)})\big(1+\log(1+\|\nabla^3 u\|_{L^2(B)})\big).
 \end{split}
\end{equation*}
Simplifying, and recalling that 
$\|\nabla^3 u\|^2_{L^2(B)}\le C\|\nabla\partial^2_{\phi}u\|^2_{L^2(B)}$ by \eqref{4.3}, 
we then find
\begin{equation*}
 \begin{split}
  \frac{d}{dt}\big(1+&\|\nabla\partial^2_{\phi}u\|_{L^2(B)}\big)\\
  &\le C_2(1+\|\nabla\partial^2_{\phi}u\|_{L^2(B)})\big(1+\log(1+\|\nabla\partial^2_{\phi}u\|_{L^2(B)})\big);
 \end{split}
\end{equation*}
that is, we have
\begin{equation*}
 \begin{split}
  \frac{d}{dt}\big(1+&\log(1+\|\nabla\partial^2_{\phi}u\|_{L^2(B)})\big)
  \le C_2\big(1+\log(1+\|\nabla\partial^2_{\phi}u\|_{L^2(B)})\big).
 \end{split}
\end{equation*}
Arguing as in the proof of Proposition \ref{prop4.4} we then obtain the claim.
\end{proof}

\subsection{$H^m$-bounds, $m\ge 4$}
In view of Proposition \ref{prop4.5} we can now use induction to prove the following result. 

\begin{proposition}\label{prop4.6}
For any $k\ge 3$, any smooth $u_0\in H^{1/2}(S^1;N)$, and any $T<T_0$ there holds 
\begin{equation*}
  \sup_{0<t<T}\int_B|\nabla\partial^k_{\phi}(t)|^2dz
  +\int_0^T\int_{\partial B}|\partial^k_{\phi}u_r|^2d\phi\,dt
  \le C_k\int_B|\nabla\partial^k_{\phi}u_0|^2dz+C_k,
\end{equation*}
where we denote as $C_k=C_k(T,R,u_0)>0$ a constant bounded by the terms on the right hand 
side in the statement of the proposition for $k-1$.
\end{proposition}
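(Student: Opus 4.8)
The plan is to proceed by induction on $k\ge 3$, the base case $k=3$ being exactly Proposition \ref{prop4.5}. So assume the asserted bounds hold for all orders up to $k-1$, with constants $C_3,\dots,C_{k-1}$, so that in particular $u$ is bounded in $L^\infty([0,T];H^k(B))$ and $\nabla\partial^{k-1}_\phi u_r\in L^2([0,T];L^2(\partial B))$; by Sobolev embedding on $S^1$ and the harmonic extension identities \eqref{4.2}--\eqref{4.3}, this gives $\nabla^j u\in L^\infty(B\times[0,T])$ for all $j\le k-2$, uniformly in terms of the previous constants. The goal is then to run the differential inequality of Lemma \ref{lemma4.3} at order $k$ and absorb the right-hand side.

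First I would invoke Lemma \ref{lemma4.3} with this value of $k$:
\begin{equation*}
  \frac{d}{dt}\big(\|\nabla\partial^k_{\phi}u\|^2_{L^2(B)}\big)
  +\|\partial^k_{\phi}u_r\|^2_{L^2(S^1)}
  \le C\sum_{1\le j_i\le k+1,\,\Sigma_ij_i\le k+2}\|\nabla\partial^k_{\phi}u\|_{L^2(B)}
  \|\Pi_i\nabla^{j_i}u\|_{L^2(B)},
\end{equation*}
and estimate the product terms $\|\Pi_i\nabla^{j_i}u\|_{L^2(B)}$. The point is that in each product $\Pi_i\nabla^{j_i}u$ with $\sum_i j_i\le k+2$ there is at most one factor of order $\ge k-1$ (since two such would force $\sum_i j_i\ge 2(k-1)\ge k+2$ only when $k=4$, and even then equality pins down the rest to order $0$). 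Hence a generic term is bounded, via Hölder, by $\|\nabla^{j}u\|_{L^2(B)}$ or $\|\nabla^{j}u\|_{L^4(B)}$ for the one high-order factor $j\in\{k-1,k,k+1\}$, times a product of lower-order factors which are controlled in $L^\infty(B)$ or $L^4(B)$ by the inductive hypothesis (using \eqref{A.2} and $H^2(B)\hookrightarrow L^\infty(B)$ as in Proposition \ref{prop4.5}). Converting orders $k+1$ back to $k$ via \eqref{4.3}, one arrives at a bound of the shape
\begin{equation*}
  J\le C_{k-1}\big(1+\|\nabla^{k+1}u\|_{L^2(B)}\big)\big(1+\|\nabla^{k-1}u\|_{L^\infty(B)}\big)
  \le C_k'\big(1+\|\nabla\partial^k_\phi u\|_{L^2(B)}\big)\big(1+\|\nabla^{k-1}u\|_{L^\infty(B)}\big).
\end{equation*}

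Next I would handle the borderline $L^\infty$-norm $\|\nabla^{k-1}u\|_{L^\infty(B)}$ exactly as in Proposition \ref{prop4.5}: by the maximum principle it is controlled by the boundary norm, and by the Brezis--Gallouet inequality \cite{Brezis-Gallouet-1980} applied to $\nabla^{k-1}u$ on $B$ (or its trace on $S^1$), together with \eqref{4.2}--\eqref{4.3},
\begin{equation*}
  \|\nabla^{k-1}u\|^2_{L^\infty(B)}
  \le C\|\nabla^{k-1}u\|^2_{H^1(B)}\big(1+\log(1+\|\nabla^{k}u\|_{H^1(B)}/\|\nabla^{k-1}u\|_{H^1(B)})\big)
  \le C_{k}\big(1+\log(1+\|\nabla\partial^k_\phi u\|_{L^2(B)})\big),
\end{equation*}
where the prefactor $\|\nabla^{k-1}u\|^2_{H^1(B)}$ is already bounded by the inductive constants. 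Plugging this into the differential inequality and writing $y(t)=1+\|\nabla\partial^k_\phi u(t)\|^2_{L^2(B)}$, after discarding the nonnegative boundary term one gets $\dot y\le C_k\,y\,(1+\log y)$, i.e. $\frac{d}{dt}\log(1+\log y)\le C_k$, which integrates to a bound on $\sup_{0<t<T}\|\nabla\partial^k_\phi u(t)\|_{L^2(B)}$ in terms of the data and $C_{k-1}$; feeding this back into the (now integrated) differential inequality controls $\int_0^T\|\partial^k_\phi u_r\|^2_{L^2(S^1)}\,dt$ as well. This closes the induction.

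The main obstacle is the bookkeeping in the combinatorial sum $\sum_{\Sigma_i j_i\le k+2}\|\Pi_i\nabla^{j_i}u\|_{L^2(B)}$: one must check carefully that in \emph{every} admissible partition at most one factor carries order $\ge k-1$, so that all the remaining factors can be absorbed into the inductively controlled constant $C_{k-1}$, and that the single high-order factor never exceeds order $k+1$ (which reduces to order $k$ by \eqref{4.3}). Once this structural observation is in place, the analytic estimates are verbatim repetitions of those in Propositions \ref{prop4.4} and \ref{prop4.5} — the multiplicative inequality \eqref{A.2}, Sobolev embeddings, and the Brezis--Gallouet logarithmic estimate — and the Gronwall-type argument is identical. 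I would therefore only spell out the partition argument in detail and refer to the proof of Proposition \ref{prop4.5} for the rest.
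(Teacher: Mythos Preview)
Your approach is essentially correct but more elaborate than the paper's, and there is a small bookkeeping slip.

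First, Proposition~\ref{prop4.5} is the case $k=2$ of the estimate (it controls $\nabla\partial_\phi^2 u$), not $k=3$. The paper takes $k=2$ as the base of the induction and passes from $k_0\ge 2$ to $k=k_0+1\ge 3$.

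Second, and more substantively, the Brezis--Gallouet step you propose is unnecessary for $k\ge 3$. Once Proposition~\ref{prop4.5} is in hand one has $\sup_t\|\nabla^3 u\|_{L^2(B)}\le C_2$, hence by $H^2(B)\hookrightarrow C^0(\bar B)$ the quantity $\|\nabla u\|_{L^\infty(B)}$ is bounded \emph{uniformly in $t$} by a constant of type $C_k$. More generally, the inductive hypothesis (recorded as \eqref{4.8} in the paper) gives $\|\nabla^j u\|_{L^\infty}\le C_k$ for all $j\le k-2$, together with $\|\nabla^{k-1}u\|_{L^4}\le C_k$ and $\|\nabla^k u\|_{L^2}\le C_k$. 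With these, the paper bounds the product sum $J$ directly by $C_k(1+\|\nabla^{k+1}u\|_{L^2(B)})$: the factor $\|\nabla^{k-1}u\|_{L^\infty}$ never arises because whenever $\nabla^{k-1}u$ appears one places it in $L^4$ (paired with the other factor in $L^4$) rather than in $L^\infty$. Lemma~\ref{lemma4.3} then yields the \emph{linear} differential inequality
\[
  \frac{d}{dt}\|\nabla\partial^k_\phi u\|^2_{L^2(B)}\le C_k\|\nabla\partial^k_\phi u\|^2_{L^2(B)}+C_k,
\]
and ordinary Gronwall closes the induction. Your logarithmic Gronwall would also work, but the log is an artifact of not exploiting the full strength of the $H^3$-bound already available from the previous step; the paper's route is strictly simpler.
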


\begin{proof}
By Proposition \ref{prop4.5} the claimed result holds true for $k=2$. 
Suppose the claim holds true for some $k_0\ge 2$ and let $k=k_0+1$. 
Note that by Sobolev's embedding $H^2(B)\hookrightarrow W^{1,4}\cap C^0(\bar{B})$ 
and \eqref{4.3} for $0\le t<T$ we then have the uniform bounds 
\begin{equation}\label{4.8}
 \begin{split}
  \|\nabla^{k_0+1}u\|^2_{L^2(B)}&+\|\nabla^{k_0}u\|^2_{L^4(B)}
  +\sum_{1\le j\le k_0-1}\|\nabla^{j}u\|^2_{L^{\infty}(B)}\\
  &\le C_{k_0}\|\nabla^{k_0+1}u_0\|^2_{L^2(B)}+C_{k_0}\le C_k<\infty
 \end{split}
\end{equation}
with a constant of the type $C_k$, as defined above.

By Lemma \ref{lemma4.3} again we only need to bound the term
\begin{equation*}
  J=\sum_{1\le j_i\le k+1,\,\Sigma_ij_i\le k+2}\|\Pi_i\nabla^{j_i}u\|_{L^2(B)}.
\end{equation*}
Clearly we have
\begin{equation*}
 \begin{split}
  J&\le\|\nabla^{k+1}u\|_{L^2(B)}\|\nabla u\|_{L^{\infty}(B)}
  +\|\nabla^ku\|_{L^2(B)}\|\nabla u\|^2_{L^{\infty}(B)}+\|\nabla^ku\nabla^2u\|_{L^2(B)}\\
  &\qquad+\|\nabla^{k-1}u\nabla^3u\|_{L^2(B)}
  +\|\nabla^{k-1}u\nabla^2u\|_{L^2(B)}\|\nabla u\|_{L^{\infty}(B)}+C_k\\
  &\le C_k\|\nabla^{k+1}u\|_{L^2(B)}+\|\nabla^ku\nabla^2u\|_{L^2(B)}
  +\|\nabla^{k-1}u\nabla^3u\|_{L^2(B)}+C_k.
 \end{split}
\end{equation*}

We now distinguish the following cases: If $k-1=k_0\ge 3$ by \eqref{4.8} we can bound 
\begin{equation*}
  \|\nabla^ku\nabla^2u\|_{L^2(B)}
  \le\|\nabla^ku\|_{L^2(B)}\|\nabla^2u\|_{L^{\infty}(B)}
  \le C_{k_0}\|\nabla^{k_0+1}u\|^2_{L^2(B)}+C_{k_0}\le C_k
\end{equation*}
as well as
\begin{equation*}
 \begin{split}
  \|\nabla^{k-1}u\nabla^3u\|_{L^2(B)}\le\|\nabla^{k-1}u\|_{L^4(B)}\|\nabla^3u\|_{L^4(B)}
  \le C_{k_0}\|\nabla^{k_0}u\|^2_{L^4(B)}+C_{k_0}\le C_k
 \end{split}
\end{equation*}
to obtain the estimate
\begin{equation*}
  J\le C_k\|\nabla^{k+1}u\|_{L^2(B)}+C_k.
\end{equation*}

If, on the other hand, $k_0=k-1=2$, by 
our induction hypothesis \eqref{4.8} we have
\begin{equation*}
 \begin{split}
  \|\nabla^{k-1}u&\nabla^3u\|_{L^2(B)}=\|\nabla^2u\nabla^ku\|^2_{L^2(B)}
  \le\|\nabla^ku\|_{L^4(B)}\|\nabla^2u\|_{L^4(B)}\\
  &\le C_k\|\nabla^ku\|_{H^1(B)}\le C_k\|\nabla^{k+1}u\|_{L^2(B)}+C_k,
 \end{split}
\end{equation*}
and we find
\begin{equation*}
  J\le C_k\|\nabla^{k+1}u\|_{L^2(B)}+C_k 
\end{equation*}
as before. 

In any case, inequality \eqref{4.3} and Lemma \ref{lemma4.3} now may be invoked to obtain
\begin{equation*}
 \begin{split}
  \frac{d}{dt}\big(\|\nabla\partial^k_{\phi}u\|^2_{L^2(B)}\big)
  \le C_k\|\nabla\partial^k_{\phi}u\|^2_{L^2(B)}+C_k,
 \end{split}
\end{equation*}
and our claim follows.
\end{proof}

\subsection{Local $H^2$-bounds}
The bounds established so far all require the initial data to be sufficiently smooth 
for the estimate at hand and do not yet allow to show smoothing of the flow.
For the latter purpose we next prove a second set of ``intermediate'' estimates that
in combination with the first set of estimates later will allow boot-strapping. 
Moreover, in contrast to the estimates established so far, the following estimates may 
be localized. This will be important for showing regularity of the flow at blow-up 
times away from concentration points of the energy on $\partial B$.

For the localized estimates, 
fix a point $z_0\in\partial B$ and some radius $0<R_0<1/4$ and for $k\in\N$ set 
$R_k=2^{-k}R_0$, $\varphi_k=\varphi_{z_0,R_k}$. Set $\varphi_k=1$ for each $k\in\N$
for the analogous global bounds.

We first establish the following localized version of Lemma \ref{lemma4.1}.

\begin{lemma}\label{lemma4.7}
With a constant $C>0$ depending only on $N$ there holds 
\begin{equation*}
  \frac{d}{dt}\big(\int_{\partial B}|u_{\phi}|^2\varphi_1^2\,d\phi\big)
  +\int_B|\nabla u_{\phi}|^2\varphi_1^2\,dz
  \le C\int_B|\nabla u|^2|u_{\phi}|^2\varphi_1^2\,dz+CR_0^{-2}E(u_0).
\end{equation*}
\end{lemma}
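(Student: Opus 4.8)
The plan is to mimic the proof of Lemma~\ref{lemma4.1}, inserting the cut-off function $\varphi_1=\varphi_{z_0,R_1}$ everywhere and carefully tracking the error terms produced whenever a derivative falls on $\varphi_1$. First I would compute, exactly as in Lemma~\ref{lemma4.1} but with the extra weight,
\begin{equation*}
 \frac12\frac{d}{dt}\Big(\int_{\partial B}|u_{\phi}|^2\varphi_1^2\,d\phi\Big)
 =-\int_{\partial B}u_{\phi\phi}\cdot u_t\,\varphi_1^2\,d\phi
 =\int_{\partial B}u_{\phi\phi}\cdot d\pi_N(u)u_r\,\varphi_1^2\,d\phi,
\end{equation*}
using the flow equation \eqref{1.3}. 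Integrating by parts in $\phi$ on $\partial B$ produces the main term $-\frac12\int_{\partial B}\partial_r(|u_{\phi}|^2)\varphi_1^2\,d\phi$ together with the curvature term $-\int_{\partial B}u_{\phi}\cdot d\nu(u)u_{\phi}\,\nu(u)\cdot u_r\,\varphi_1^2\,d\phi$ handled as before, plus now an additional boundary term coming from $\partial_\phi(\varphi_1^2)$; but since $\varphi_1$ depends only on $z$ and its tangential derivative on $\partial B$ is $O(R_1^{-1})=O(R_0^{-1})$, this term is bounded by $C R_0^{-1}\int_{\partial B}|u_\phi||u_r|\,d\phi\le C R_0^{-1}\|\nabla u\|_{L^2(\partial B)}^2\le C R_0^{-1}E(u_0)$ via \eqref{3.7} and Lemma~\ref{lemma2.1}. (In fact one absorbs it into the $CR_0^{-2}E(u_0)$ of the statement, crudely bounding $R_0^{-1}\le R_0^{-2}$ since $R_0<1/4$, or keeps it as is.)

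Next I would convert the main term to a bulk integral. Since $u_\phi$ is harmonic and $\Delta|u_\phi|^2=2|\nabla u_\phi|^2$, Gauss' theorem applied with the weight $\varphi_1^2$ gives
\begin{equation*}
 \frac12\int_{\partial B}\partial_r(|u_{\phi}|^2)\varphi_1^2\,d\phi
 =\int_B\mathrm{div}\big(\tfrac12\nabla(|u_\phi|^2)\varphi_1^2\big)\,dz
 =\int_B|\nabla u_\phi|^2\varphi_1^2\,dz+\int_B\nabla(|u_\phi|^2)\cdot\nabla(\varphi_1^2)\,dz\cdot\tfrac12,
\end{equation*}
wait — more precisely $\frac12\int_B\nabla(|u_\phi|^2)\cdot\nabla(\varphi_1^2)\,dz=\int_B u_\phi\cdot\nabla u_\phi\,\varphi_1\nabla\varphi_1\cdot 2\,dz$, which by Young's inequality is bounded by $\frac14\int_B|\nabla u_\phi|^2\varphi_1^2\,dz+CR_0^{-2}\int_B|u_\phi|^2\,dz\le\frac14\int_B|\nabla u_\phi|^2\varphi_1^2\,dz+CR_0^{-2}E(u_0)$, using $|\nabla\varphi_1|\le CR_0^{-1}$ and \eqref{4.3}/\eqref{3.7} to bound $\int_B|u_\phi|^2\le C\|\nabla u\|_{L^2(S^1)}^2\le CE(u_0)$. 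Thus the main term returns $\int_B|\nabla u_\phi|^2\varphi_1^2\,dz$ up to a quarter of itself (absorbable) plus the advertised $CR_0^{-2}E(u_0)$ error.

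Then I would treat the curvature term exactly as in Lemma~\ref{lemma4.1}: rewrite $\int_{\partial B}u_r\cdot\nu(u)\,u_\phi\cdot d\nu(u)u_\phi\,\varphi_1^2\,d\phi$ as a bulk integral $\int_B\nabla u\cdot\nabla\big(\nu(u)\,u_\phi\cdot d\nu(u)u_\phi\,\varphi_1^2\big)\,dz$ (using that $u$ is harmonic and $\nu(u)\,u_\phi\cdot d\nu(u)u_\phi\,\varphi_1^2\in H^1_0$), expand the gradient by Leibniz. The terms where the derivative hits $u$ or the coefficient $\nu(u)\,d\nu(u)$ are precisely those in Lemma~\ref{lemma4.1}, now carrying a harmless extra factor $\varphi_1^2\le1$, and are bounded by $\frac12\int_B|\nabla u_\phi|^2\varphi_1^2\,dz+C\int_B|\nabla u|^2|u_\phi|^2\varphi_1^2\,dz$ after Young (to get the localized $|\nabla u_\phi|\varphi_1$ one must also use $|\nabla u|\le C$ on $\partial B$... actually no — one keeps $|\nabla u_\phi|\varphi_1$ against $|\nabla u||u_\phi|\varphi_1$ directly by Young, which is fine). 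The only genuinely new term is the one where $\nabla$ hits $\varphi_1^2$, namely $\int_B\nabla u\cdot\nu(u)\,u_\phi\cdot d\nu(u)u_\phi\,2\varphi_1\nabla\varphi_1\,dz$; this is $\le CR_0^{-1}\int_B|\nabla u||u_\phi|^2\varphi_1\,dz\le C\int_B|\nabla u|^2|u_\phi|^2\varphi_1^2\,dz+CR_0^{-2}\int_B|u_\phi|^2\,dz\le C\int_B|\nabla u|^2|u_\phi|^2\varphi_1^2\,dz+CR_0^{-2}E(u_0)$. Collecting everything, absorbing the $\tfrac34\int_B|\nabla u_\phi|^2\varphi_1^2$ worth of absorbable terms to the left, yields the stated inequality. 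The only subtle point — and the one I would take most care over — is the localization of the curvature term's derivative-on-$u_\phi$ piece: one must make sure it is still controlled by $\int_B|\nabla u|^2|u_\phi|^2\varphi_1^2\,dz$ and not by an unlocalized quantity, which works because in Lemma~\ref{lemma4.1} that term was already quadratic in $\nabla u_\phi$ only after Young against $|\nabla u||u_\phi|$, and the cut-off rides along on both factors identically.
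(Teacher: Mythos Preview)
Your proof has a genuine gap in the treatment of the boundary error terms, and it hinges on the false estimate $\|\nabla u\|_{L^2(\partial B)}^2\le CE(u_0)$.

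First, your opening displayed identity
\[
  \frac12\frac{d}{dt}\Big(\int_{\partial B}|u_{\phi}|^2\varphi_1^2\,d\phi\Big)
  =-\int_{\partial B}u_{\phi\phi}\cdot u_t\,\varphi_1^2\,d\phi
\]
is incorrect: when you integrate $\int u_\phi\cdot u_{\phi t}\,\varphi_1^2\,d\phi$ by parts in $\phi$, the derivative also hits $\varphi_1^2$, producing a term $-\int u_\phi\cdot u_t\,\partial_\phi(\varphi_1^2)\,d\phi$ that you have dropped. After your second integration by parts you pick up a \emph{second} such term, $-\int u_\phi\cdot d\pi_N(u)u_r\,\partial_\phi(\varphi_1^2)\,d\phi$, which you do record. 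Had you kept both, they would cancel (since $u_t=-d\pi_N(u)u_r$), and there would be no boundary error at all. This cancellation is exactly what the paper exploits: it writes $\int u_\phi u_{\phi t}\varphi_1^2=-\int\partial_\phi(u_\phi\varphi_1^2)\,u_t=\int\partial_\phi(u_\phi\varphi_1^2)\,d\pi_N(u)u_r=-\int u_\phi\varphi_1^2\,\partial_\phi(d\pi_N(u)u_r)$, moving the $\partial_\phi$ back and forth so that it never lands on $\varphi_1^2$.

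Because you dropped one of the two terms, you are left with a single boundary error $CR_0^{-1}\int_{\partial B}|u_\phi||u_r|\,d\phi$, which you then bound by $CR_0^{-1}\|\nabla u\|_{L^2(\partial B)}^2\le CR_0^{-1}E(u_0)$. This inequality is \emph{false}: $\|\nabla u\|_{L^2(\partial B)}^2$ is (up to constants) the $H^{3/2}(B)$-seminorm of $u$, not the Dirichlet energy, and controlling it is precisely the point of the lemma you are proving (cf.\ Proposition~\ref{prop4.8}). Neither \eqref{3.7} nor Lemma~\ref{lemma2.1} gives this bound. The same mistaken inequality reappears when you justify $\int_B|u_\phi|^2\le CE(u_0)$; there the conclusion is in fact true, but for the elementary reason $|u_\phi|\le|\nabla u|$ pointwise on $B$, not via $\|\nabla u\|_{L^2(S^1)}^2$.

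Once the boundary cancellation is used, the remainder of your argument---the Gauss-theorem conversion of $\frac12\int_{\partial B}\partial_r(|u_\phi|^2)\varphi_1^2$ to a bulk term plus a Young-absorbable error, and the treatment of the curvature term---is essentially the paper's proof and is correct.
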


\begin{proof} 
Similar to the proof of Lemma \ref{lemma4.1}, we compute
\begin{equation*}
 \begin{split}
  &\frac12\frac{d}{dt}\big(\int_{\partial B}|u_{\phi}|^2\varphi_1^2\,d\phi\big)
  =\int_{\partial B}u_{\phi}\cdot u_{\phi,t}\varphi_1^2\,d\phi
  =-\int_{\partial B}\partial_{\phi}(u_{\phi}\varphi_1^2)\cdot u_td\phi\\
  &=\int_{\partial B}\partial_{\phi}(u_{\phi}\varphi_1^2)\cdot d\pi_N(u)u_rd\phi
  =-\int_{\partial B}\big(u_{\phi}\cdot u_{r\phi}
  -u_{\phi}\cdot\partial_{\phi}(\nu(u)\,\nu(u)\cdot u_r)\big)\varphi_1^2d\phi\\
  &=-\frac12\int_{\partial B}\partial_r(|u_{\phi}|^2)\varphi_1^2d\phi 
  -\int_{\partial B}u_{\phi}\cdot d\nu(u)u_{\phi}\,\nu(u)\cdot u_r\varphi_1^2d\phi.
 \end{split}
\end{equation*}
With $\Delta|u_{\phi}|^2=2|\nabla u_{\phi}|^2$ we obtain
\begin{equation*}
   \frac12\int_{\partial B}\partial_r(|u_{\phi}|^2)\varphi_1^2d\phi
   =\int_B|\nabla u_{\phi}|^2\varphi_1^2dz+\int_B\nabla |u_{\phi}|^2\varphi_1\nabla\varphi_1dz,
\end{equation*}
where 
\begin{equation*}
   \big|\int_B\nabla |u_{\phi}|^2\varphi_1\nabla\varphi_1dz\big|
   \le\frac14\int_B|\nabla u_{\phi}|^2\varphi_1^2dz+C\int_B |u_{\phi}|^2|\nabla\varphi_1|^2dz
\end{equation*}
by Young's inequality. Finally, we can bound
\begin{equation*}
 \begin{split}
  \int_{\partial B}&u_r\cdot\nu(u)\,u_{\phi}\cdot d\nu(u)u_{\phi}\varphi_1^2\,d\phi
  =\int_B\nabla u\cdot\nabla\big(\nu(u)\,u_{\phi}\cdot d\nu(u)u_{\phi}\varphi_1^2\big)dz\\
  &\le C\int_B\big(|\nabla u_{\phi}||\nabla u||u_{\phi}|+|\nabla u|^2|u_{\phi}|^2\big)\varphi_1^2dz
  +C\int_B|\nabla u||\nabla\varphi_1||u_{\phi}|^2\varphi_1dz\\
  &\le\frac14\int_B|\nabla u_{\phi}|^2\varphi_1^2dz
  +C\int_B|\nabla u|^2|u_{\phi}|^2\varphi_1^2dz+C\int_B|\nabla u|^2|\nabla\varphi_1|^2dz,
 \end{split}
\end{equation*}
and our claim follows.
\end{proof}

We need a substitute for the global bound \eqref{4.3}. 
For this, we note that the equation \eqref{4.1} 
also implies the pointwise bound $|u_{rr}|^2\le2|u_{\phi\phi}|^2/r^4+2|u_r|^2/r^2$; 
hence we have
\begin{equation*}
 \begin{split}
  |\nabla^2u|^2\le C(|\nabla u_{\phi}|^2+2|\nabla u|^2)\ \hbox{ in } B_{R_0}(z_0)
 \end{split}
\end{equation*}
with an absolute constant $C>0$, uniformly in $z_0\in\partial B$ and $0<R_0<1/4$.
By induction then, similarly we have 
\begin{equation}\label{4.9}
 \begin{split}
  |\nabla^{k+1}u|^2\le C(|\nabla^k\partial_{\phi}u|^2+|\nabla^ku|^2)
  \le C\sum_{j=0}^k|\nabla\partial_{\phi}^ju|^2\ \hbox{ in } B_{R_0}(z_0)
 \end{split}
\end{equation}
with an absolute constant $C=C(k)>0$, uniformly in $z_0\in\partial B$ and $0<R_0<1/4$ 
for any $k\in\N$. 

Likewise, as a substitute for the global non-concentration condition \eqref{4.4} 
we now suppose that $z_0\in\partial B$ is not a concentration point in the sense that 
for suitably chosen
$\delta>0$ to be determined in the sequel and some $0<R_0<1/4$ as above there holds
\begin{equation}\label{4.10}
   \sup_{0<t<T_0}\int_{B_{R_0}(z_0)\cap B}|\nabla u(t)|^2dz<\delta.
\end{equation} 
We then obtain the following localized version of Proposition \ref{prop4.2}.

\begin{proposition}\label{prop4.8}
There exist constants $\delta>0$ and $C>0$ independent of $R_0>0$ 
such that whenever \eqref{4.10} holds then for any $T\le T_0$ we have
\begin{equation*}
 \begin{split}
  \sup_{0<t<T}\int_{\partial B}|u_{\phi}(t)|^2\varphi_1^2\,d\phi
  &+\int_0^T\int_B|\nabla u_{\phi}|^2\varphi_1^2\,dz\,dt\\
  &\le2\int_{\partial B}|u_{0,\phi}|^2\varphi_1^2\,d\phi+CTR_0^{-2}E(u_0).
 \end{split}
\end{equation*}
\end{proposition}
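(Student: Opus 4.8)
The plan is to argue exactly as in the proof of Proposition \ref{prop4.2}, but carrying the cutoff $\varphi_1=\varphi_{z_0,R_1}$ through the estimate and replacing the global non-concentration hypothesis \eqref{4.4} by the localized one \eqref{4.10}. Starting from Lemma \ref{lemma4.7}, the only term that needs work is the "bad" term $\int_B|\nabla u|^2|u_{\phi}|^2\varphi_1^2\,dz$, and the goal is to absorb it into $\tfrac12\int_B|\nabla u_{\phi}|^2\varphi_1^2\,dz$ after integrating in time, while everything left over is controlled by $CTR_0^{-2}E(u_0)$ via Lemma \ref{lemma2.1}.

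First I would cover the support of $\varphi_1$ (which sits inside $B_{R_1}(z_0)=B_{R_0/2}(z_0)$) by finitely many balls $B_{R_0/4}(z_i)$ with bounded overlap $L$, all contained in $B_{R_0}(z_0)$ so that \eqref{4.10} applies on each of them. Then I would write
\begin{equation*}
  \int_B|\nabla u|^2|u_\phi|^2\varphi_1^2\,dz
  \le C\sum_i\int_{B_{R_0/4}(z_i)}|\nabla u|^4\,dz
  \le C\sum_i\int_B|\nabla(u\varphi_{z_i,R_0})|^4\,dz,
\end{equation*}
and apply the multiplicative inequality \eqref{A.2} on each ball to get
\begin{equation*}
  \int_B|\nabla(u\varphi_{z_i,R_0})|^4\,dz
  \le C\delta\int_{B_{R_0}(z_i)}\big(|\nabla^2u|^2+R_0^{-2}|\nabla u|^2\big)\,dz,
\end{equation*}
using \eqref{4.10}. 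Here one must be slightly careful: on $B_{R_0}(z_i)$ the function $u\varphi_{z_i,R_0}$ need not have the exact structure used in the global case since $z_i$ is interior to $B$; but \eqref{A.2} is stated for $u\varphi_{z_0,R}$ with $z_0\in B$ arbitrary, so it applies verbatim. Summing over $i$ and using the bounded overlap gives
\begin{equation*}
  \int_B|\nabla u|^2|u_\phi|^2\varphi_1^2\,dz
  \le CL\delta\int_{B_{R_1}(z_0)\cap B}|\nabla^2u|^2\,dz+CL\delta R_0^{-2}E(u_0).
\end{equation*}
To convert the $|\nabla^2u|^2$ integral into $|\nabla u_\phi|^2\varphi_1^2$ I would use the pointwise bound $|\nabla^2u|^2\le C(|\nabla u_\phi|^2+|\nabla u|^2)$ valid in $B_{R_0}(z_0)$ (noted just before \eqref{4.9}), at the cost of another $CL\delta R_0^{-2}E(u_0)$ term; a minor technical point is that $\varphi_1\equiv1$ only on $B_{R_1/2}(z_0)$, so to be clean I would instead run the covering argument with balls adapted to $\{\varphi_1>0\}$ and introduce an auxiliary cutoff $\varphi_0=\varphi_{z_0,R_0}\equiv1$ on $\mathrm{supp}\,\varphi_1$ when passing from $|\nabla^2u|$ back to $|\nabla u_\phi|$, which is harmless.

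The main obstacle — and it is the same one as in Proposition \ref{prop4.2} — is choosing $\delta=\delta(N)>0$ small enough that $CL\delta\le\tfrac12$, so that the resulting $\tfrac12\int_B|\nabla u_\phi|^2\varphi_1^2\,dz$ can be absorbed on the left after integrating Lemma \ref{lemma4.7} over $[0,T]$. One then obtains
\begin{equation*}
  \sup_{0<t<T}\int_{\partial B}|u_\phi|^2\varphi_1^2\,d\phi
  +\tfrac12\int_0^T\!\!\int_B|\nabla u_\phi|^2\varphi_1^2\,dz\,dt
  \le\int_{\partial B}|u_{0,\phi}|^2\varphi_1^2\,d\phi+CTR_0^{-2}E(u_0),
\end{equation*}
and after renaming constants and doubling the energy term to absorb the $\tfrac12$, this is exactly the claimed inequality. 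I expect no genuinely new difficulty beyond the global case; the only things requiring care are the bounded-overlap covering (so that constants are independent of $R_0$) and the bookkeeping of the several $R_0^{-2}E(u_0)$ error terms produced by the cutoff, all of which are controlled once and for all by Lemma \ref{lemma2.1}.
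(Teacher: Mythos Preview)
Your covering argument has a genuine gap at the absorption step. After you sum over the balls $B_{R_0/4}(z_i)$ (or whatever radii you choose), the resulting term is of the form $CL\delta\int_{\Omega}|\nabla^2 u|^2\,dz$ where $\Omega$ is the union of the \emph{supports of the cutoffs} $\varphi_{z_i,R_0}$ (or $\varphi_{z_i,R_0/2}$), not just the union of the small balls. With centres $z_i\in B_{R_0/2}(z_0)$ this union is at best contained in $B_{R_0}(z_0)$, strictly larger than $\operatorname{supp}\varphi_1=B_{R_0/2}(z_0)$. After converting $|\nabla^2u|^2$ to $|\nabla u_\phi|^2+|\nabla u|^2$ via \eqref{4.9}, you are left with $CL\delta\int_{B_{R_0}(z_0)}|\nabla u_\phi|^2\,dz$, which \emph{cannot} be bounded by $\int_B|\nabla u_\phi|^2\varphi_1^2\,dz$ since $\varphi_1^2\le 1$ and vanishes outside $B_{R_0/2}(z_0)$. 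The auxiliary cutoff $\varphi_0$ you propose does not help: to absorb you would have to run Lemma~\ref{lemma4.7} with $\varphi_0$ instead of $\varphi_1$, but then the covering would require \eqref{4.10} on a ball of radius larger than $R_0$, which is not assumed.

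The paper avoids this by dispensing with the covering altogether and applying \eqref{A.1} \emph{once} with $v=\nabla u$ and the single cutoff $\varphi=\varphi_1$. The point is that in the Cauchy--Schwarz step one may group the factor $\varphi_1$ with $|\nabla^2u|$ rather than with $|\nabla u|$, which yields directly
\[
\int_B|\nabla u|^4\varphi_1^2\,dz
\le C\Big(\int_B|\nabla^2u|^2\varphi_1^2\,dz\Big)\Big(\int_{B_{R_1}(z_0)}|\nabla u|^2\,dz\Big)
+CR_0^{-2}\Big(\int_{B_{R_1}(z_0)}|\nabla u|^2\,dz\Big)^2,
\]
so that \eqref{4.10} gives $C\delta\int_B|\nabla^2u|^2\varphi_1^2\,dz+C\delta R_0^{-2}E(u_0)$ with the weight $\varphi_1^2$ intact on the second-order term. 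Now \eqref{4.9} converts this to $C\delta\int_B|\nabla u_\phi|^2\varphi_1^2\,dz$ plus lower order, and absorption into the left side of Lemma~\ref{lemma4.7} goes through for small $\delta$. The difference from Proposition~\ref{prop4.2} is precisely that here one needs the weight preserved, which the covering argument cannot do.
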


\begin{proof}
With the help of the inequality \eqref{A.1} in the Appendix we can bound 
\begin{equation*}
 \begin{split}
  \int_B|\nabla u|^4&\varphi_1^2dz
 \le C\delta\int_{B_R(z_i)}|\nabla^2u|^2\varphi_1^2dz
 +C\delta R_0^{-2}\int_{B_R(z_i)}|\nabla u|^2dz.
 \end{split}
\end{equation*}
Thus, for sufficiently small $\delta>0$ our claim follows from Lemma \ref{lemma4.7}.
\end{proof}

The next lemma again prepares for a proposition that later will allow us to obtain 
higher derivative bounds by induction. Note the differences to Lemma \ref{lemma4.3}.

\begin{lemma}\label{lemma4.9}
For any $k\ge 2$, with a constant $C>0$ depending only on $k$ and $N$,  
for the solution $u=u(t)$ to \eqref{1.3}, \eqref{1.4} for any $0<t<T_0$ there holds 
\begin{equation*}
 \begin{split}
  \frac{d}{dt}&\big(\|\partial^k_{\phi}u\varphi_k\|^2_{L^2(\partial B)}\big)
  +\|\nabla\partial^k_{\phi}u\varphi_k\|^2_{L^2(B)}\\
  &\le C\sum_{1\le j_i\le k,\, \Sigma_ij_i\le 2k+2} \|\Pi_i\nabla^{j_i}u\varphi_k^2\|_{L^1(B)}\\
  &\quad+C\sum_{1\le j_i\le k,\,\Sigma_{i\ge 0}j_i\le k+1}
  \|\Pi_{i>0}\nabla^{j_i}u\nabla^{j_0}\varphi_k\|^2_{L^2(B)}+CR_0^{-2k}E(u_0).
 \end{split}
\end{equation*}
\end{lemma}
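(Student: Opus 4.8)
The plan is to mimic the computation in Lemma \ref{lemma4.3}, but carrying the cut-off $\varphi_k$ through every integration by parts and tracking the commutator terms it generates. First I would differentiate $\|\partial^k_{\phi}u\,\varphi_k\|^2_{L^2(\partial B)}$ in time, write $u_t=-d\pi_N(u)u_r$ via \eqref{1.3}, and integrate by parts in $\phi$ on $\partial B$ to move $k$ tangential derivatives onto $d\pi_N(u)u_r$; since $\partial^{2k}_{\phi}u$ is harmonic I would then use the divergence theorem exactly as in \eqref{4.6} to convert the boundary pairing $(\partial^{2k}_{\phi}u_r,\cdot)_{L^2(S^1)}$ into a bulk pairing $(\nabla\partial^k_{\phi}u,\nabla(\dots)\varphi_k^2)_{L^2(B)}$ plus terms where a derivative hits $\varphi_k$. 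The principal term reproduces $-\|\nabla\partial^k_{\phi}u\,\varphi_k\|^2_{L^2(B)}$ up to a Young-absorbable piece $\tfrac14\|\nabla\partial^k_{\phi}u\,\varphi_k\|^2+C\|\nabla\partial^{k}_{\phi}u\,\nabla\varphi_k\|^2$, the latter being one of the terms on the right-hand side with $j_0=1$.

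Next I would expand $\partial^k_{\phi}(\nu(u)\nu(u)\cdot u_r)$ by the Leibniz rule. The terms where at least one $\phi$-derivative falls on a $\nu(u)$ factor are the "good" nonlinear terms: estimating them with Cauchy–Schwarz in $L^2(B,\varphi_k^2\,dz)$ gives products $\|\Pi_i\nabla^{j_i}u\,\varphi_k^2\|_{L^1(B)}$ with $1\le j_i\le k$ and $\Sigma_i j_i\le 2k+2$ (one factor $\nabla\partial^k_{\phi}u$, the remaining factors carrying the rest of the derivatives, total degree at most $k+(k+2)=2k+2$), plus cut-off commutators of the stated $L^2$-type. The one genuinely delicate term is the "diagonal" piece $I_0=\|\partial^k_{\phi}u_r\cdot\nu(u)\,\varphi_k\|^2_{L^2(S^1)}$, where no derivative lands on $\nu$; here, exactly as in Lemma \ref{lemma4.3}, I would rewrite $\nu(u)\cdot\partial^k_{\phi}u_r=\partial^k_{\phi}(dist_N(u))_r$ minus lower-order commutator terms, pair against $\partial^k_{\phi}u_r\cdot\nu(u)\,\varphi_k^2$ on $S^1$, use the divergence theorem to pass to $B$, and invoke equation \eqref{3.5} together with interior $L^2$-elliptic regularity for $dist_N(u)$ to bound $\|\nabla^{k+2}(dist_N(u))\,\varphi_k\|_{L^2(B)}$ by $\|\Delta(dist_N(u))\|_{H^k}$-type quantities, i.e. by sums $\|\Pi_i\nabla^{j_i}u\|_{L^2(B)}$ with $\Sigma_i j_i\le k+2$. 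All remaining derivatives of $u_r$ are converted to tangential-plus-radial form via \eqref{4.9} (the local substitute for \eqref{4.3}), so everything is expressed through $\nabla\partial^j_{\phi}u$.

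The main obstacle, and the reason the statement looks heavier than Lemma \ref{lemma4.3}, is bookkeeping the cut-off: every time I integrate by parts in $B$ a derivative may hit $\varphi_k$, and in the elliptic-regularity step for $dist_N(u)$ the localization forces me to work on the slightly larger ball (radius $R_{k-1}$, where $\varphi_{k-1}\equiv1$ on $\operatorname{supp}\varphi_k$), using $|\nabla^j\varphi_k|\le CR_0^{-j}2^{jk}$; collecting the lowest-order leftovers — those involving only $\nabla u$ against derivatives of $\varphi_k$ — and bounding them crudely by the energy yields the final $CR_0^{-2k}E(u_0)$ term after invoking Lemma \ref{lemma2.1}. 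A second point of care is that in the diagonal term one must check that the commutator $II$ (from replacing $\nu(u)\cdot\partial^k_{\phi}u_r$ by $\partial^k_{\phi}(dist_N(u))_r$) only produces factors with $1\le j_i\le k$ and total degree $\le 2k+2$, so that it fits the first sum; this is the same structure already verified for $k\ge1$ in Lemma \ref{lemma4.3} and goes through verbatim here for $k\ge2$. Once all terms are in the stated form, the claimed differential inequality follows by collecting the absorbed $\tfrac14$-pieces onto the left.
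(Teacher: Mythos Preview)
Your overall roadmap---differentiate, integrate by parts in $\phi$, expand by Leibniz, pass to the bulk via the divergence theorem, handle the diagonal piece with the $dist_N$ trick and localized elliptic regularity, then collect cut-off commutators---is exactly the paper's strategy. But you have transplanted the structure of Lemma~\ref{lemma4.3} onto a computation that is genuinely different, and this causes several concrete errors.

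The quantity being differentiated is $\|\partial^k_\phi u\,\varphi_k\|^2_{L^2(\partial B)}$, not $\|\nabla\partial^k_\phi u\|^2_{L^2(B)}$. After the integration by parts in $\phi$ you land at $\int_{\partial B}\partial^k_\phi u\,\varphi_k^2\cdot\partial^k_\phi\big(d\pi_N(u)u_r\big)\,d\phi$; no pairing $(\partial^{2k}_\phi u_r,\cdot)_{L^2(S^1)}$ ever appears. The dissipation comes instead from $-\int_{\partial B}\partial^k_\phi u\cdot\partial^k_\phi u_r\,\varphi_k^2=-\tfrac12\int_{\partial B}\partial_r|\partial^k_\phi u|^2\,\varphi_k^2$ together with Gauss' theorem and $\Delta|\partial^k_\phi u|^2=2|\nabla\partial^k_\phi u|^2$; the cut-off commutator this produces is $C\|\partial^k_\phi u\,\nabla\varphi_k\|^2_{L^2(B)}$, not $C\|\nabla\partial^k_\phi u\,\nabla\varphi_k\|^2_{L^2(B)}$.

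More seriously, the diagonal term is \emph{not} the perfect square $\|\partial^k_\phi u_r\cdot\nu(u)\,\varphi_k\|^2_{L^2(S^1)}$. Since one side carries $\partial^k_\phi u$ (not $\partial^k_\phi u_r$), the $j=0$ piece is the asymmetric pairing
\[
  I_0=(\partial^k_\phi u\cdot\nu(u)\,\varphi_k^2,\ \nu(u)\cdot\partial^k_\phi u_r)_{L^2(S^1)}.
\]
After rewriting $\nu(u)\cdot\partial^k_\phi u_r=\partial^k_\phi(dist_N(u))_r+\text{(lower order)}$ and passing to $B$, only $\|\nabla^{k+1}(dist_N(u))\varphi_k\|_{L^2(B)}$ is needed, \emph{not} $\|\nabla^{k+2}(dist_N(u))\varphi_k\|_{L^2(B)}$; the paper achieves this by one further integration by parts in $\phi$ on the interior Laplacian term $(\partial^k_\phi u\cdot\nu(u)\varphi_k^2,\Delta\partial^k_\phi(dist_N(u)))_{L^2(B)}$. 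This one-derivative saving is precisely why the right-hand side of the lemma has $1\le j_i\le k$ rather than $1\le j_i\le k+1$ as in Lemma~\ref{lemma4.3}. Similarly, for $I_j$ with $j\ge1$ the divergence theorem is applied using harmonicity of $\partial^{k-j}_\phi u$ (so $\partial^{k-j}_\phi u_r$ becomes an interior gradient and $\nabla$ lands on the factor $\partial^k_\phi u\cdot\partial^j_\phi(\nu\nu)\varphi_k^2$), not using harmonicity of $\partial^k_\phi u$ as in Lemma~\ref{lemma4.3}. Your remarks on the cut-off bookkeeping and on Poincar\'e for the lowest-order $dist_N$ leftover are correct in spirit, but until the derivative count above is fixed the products you obtain will not fit the sums in the statement.
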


\begin{proof}
Fix $k\ge 2$. With $\Delta|\partial^k_{\phi}u|^2=2|\nabla\partial^k_{\phi}u|^2$ we compute
\begin{equation*}
 \begin{split}
 \frac12&\frac{d}{dt}\big(\|\partial^k_{\phi}u\varphi_k\|^2_{L^2(\partial B)}\big)
 =(-1)^k\int_{\partial B}\partial^k_{\phi}(\partial^k_{\phi}u\varphi_k^2)\cdot u_td\phi\\
 &=(-1)^{k+1}\int_{\partial B}\partial^k_{\phi}(\partial^k_{\phi}u\varphi_k^2)
 \cdot(u_r-\nu(u)\,\nu(u)\cdot u_r)d\phi\\
 &=-\frac12\int_{\partial B}\partial_r(|\partial^k_{\phi}u|^2)\varphi_k^2d\phi 
 +\int_{\partial B}\partial^k_{\phi}u\cdot\partial^k_{\phi}\big(\nu(u)\,\nu(u)\cdot u_r)\big)
  \varphi_k^2d\phi.\\
 &=-\int_B|\nabla\partial^k_{\phi}u|^2\varphi_k^2dz
 -\int_B\nabla(|\partial^k_{\phi}u|^2)\varphi_k\nabla\varphi_kdz+I,
 \end{split}
\end{equation*}
where we split 
\begin{equation*}
   I=\int_{\partial B}\partial^k_{\phi}u\cdot\partial^k_{\phi}
   \big(\nu(u)\,\nu(u)\cdot u_r)\big)\varphi_k^2d\phi=\sum_{j=0}^k\Big({k\atop j}\Big)I_j
\end{equation*}
with
\begin{equation*}
 \begin{split}
   I_j=(\partial^k_{\phi}u\cdot&\partial^j_{\phi}(\nu(u)\,\nu(u))\varphi_k^2,
   \partial^{k-j}_{\phi}u_r)_{L^2(\partial B)}\\
   &=\big(\nabla\big(\partial^k_{\phi}u\cdot\partial^j_{\phi}(\nu(u)\nu(u))\varphi_k^2\big),
   \nabla\partial^{k-j}_{\phi}u\big)_{L^2(B)},\ 0\le j\le k.
 \end{split}
\end{equation*}
For $1\le j\le k$ we bound
\begin{equation*}
 \begin{split}
 |I_j|&\le C\sum_{0\le i\le j}\|\nabla\partial^k_{\phi}u\varphi_k\|_{L^2(B)}
 \|\partial^{j-i}_{\phi}\nu(u)\partial^{i}_{\phi}\nu(u)
 \nabla\partial^{k-j}_{\phi}u\varphi_k\|_{L^2(B)}\\
 &\quad+C\sum_{0\le i\le j}\|\partial^k_{\phi}u\cdot
 \nabla\big(\partial^{j-i}_{\phi}\nu(u)\partial^{i}_{\phi}\nu(u)\varphi_k^2\big)\cdot
 \nabla\partial^{k-j}_{\phi}u\|_{L^1(B)}\\
  \end{split}
\end{equation*}
By the chain rule then for $1\le j\le k$ we have
\begin{equation*}
 \begin{split}
 |I_j|&\le C\sum_{1\le j_i\le k,\, \Sigma_ij_i=k+1}\|\nabla\partial^k_{\phi}u\varphi_k\|_{L^2(B)}
 \|\Pi_i\nabla^{j_i}u\varphi_k\|_{L^2(B)}\\
 &\quad+C\sum_{1\le j_i\le k,\, \Sigma_ij_i=k+2}
 \|\partial^k_{\phi}u\cdot\Pi_i\nabla^{j_i}u\varphi_k^2\|_{L^1(B)}\\
 &\quad+C\sum_{1\le j_i\le k,\, \Sigma_ij_i=k+1}
 \|\partial^k_{\phi}u\cdot\Pi_i\nabla^{j_i}u\varphi_k\nabla\varphi_k\|_{L^1(B)}.
 \end{split}
\end{equation*}
By Cauchy-Schwarz and Young's inequality then we can bound
\begin{equation*}
 \begin{split}
 &\sum_{1\le j\le k}|I_j|\le\frac14\|\nabla\partial^k_{\phi}u\varphi_k\|^2_{L^2(B)}
 + C\sum_{1\le j_i\le k,\, \Sigma_ij_i=k+1} \|\Pi_i\nabla^{j_i}u\varphi_k\|^2_{L^2(B)}\\
 &\qquad+C\sum_{1\le j_i\le k,\,\Sigma_ij_i=2k+2}
 \|\Pi_i\nabla^{j_i}u\varphi_k^2\|_{L^1(B)}
 +C\|\partial^k_{\phi}u\nabla\varphi_k\|^2_{L^2(B)}\\
 &\le\frac14\|\nabla\partial^k_{\phi}u\varphi_k\|^2_{L^2(B)}
 +C\sum_{{1\le j_i\le k}\atop{\Sigma_ij_i=2k+2}}\|\Pi_i\nabla^{j_i}u\varphi_k^2\|_{L^1(B)}
 +C\|\partial^k_{\phi}u\nabla\varphi_k\|^2_{L^2(B)},
 \end{split}
\end{equation*}
as claimed. Finally, with 
\begin{equation*}
  \nu(u)\cdot u_{\phi r}=(dist_N(u))_{\phi r}-u_r\cdot d\nu(u)u_{\phi}
\end{equation*}
as in the proof of Lemma \ref{lemma4.3}, for $j=0$ we can write 
\begin{equation*}
 \nu(u)\cdot\partial^k_{\phi}u_r
 =\partial^{k-1}_{\phi}\big(\nu(u)\cdot u_{\phi r}\big)+II 
 =\partial^k_{\phi}(dist_N(u))_r+III,
\end{equation*}
where the terms in $II$ and $III$ involve products of at least two derivatives of 
orders between $1$ and $k$ of $u$. Thus we have
\begin{equation*}
 \begin{split}
   I_0&=(\partial^k_{\phi}u\cdot\nu(u)\varphi_k^2,
   \nu(u)\cdot\partial^k_{\phi}u_r)_{L^2(\partial B)}\\
   &=(\partial^k_{\phi}u\cdot\nu(u)\varphi_k^2,\partial^k_{\phi}(dist_N(u))_r)_{L^2(\partial B)}
   +II_0
 \end{split}
\end{equation*}
with a term $II_0$ that can be dealt with in the same way as the terms $I_j$, $1\le j\le k$.

Using the divergence theorem and integrating by parts we can write the leading term as
\begin{equation*}
 \begin{split}
   \hat{I}_0&
   :=(\partial^k_{\phi}u\cdot\nu(u)\varphi_k^2,\partial^k_{\phi}(dist_N(u))_r)_{L^2(\partial B)}\\
   &=\big(\nabla\big(\partial^k_{\phi}u\cdot\nu(u)\varphi_k^2\big),
   \nabla\partial^k_{\phi}(dist_N(u))\big)_{L^2(B)}\\
   &\qquad+\big(\partial^k_{\phi}u\cdot\nu(u)\varphi_k^2,
   \Delta\partial^k_{\phi}(dist_N(u))\big)_{L^2(B)}\\
   &=\big(\nabla\big(\partial^k_{\phi}u\cdot\nu(u)\varphi_k^2\big),
   \nabla\partial^k_{\phi}(dist_N(u))\big)_{L^2(B)}\\
   &\qquad-\big(\partial_{\phi}\big(\partial^k_{\phi}u\cdot\nu(u)\varphi_k^2\big),
   \Delta\partial^{k-1}_{\phi}(dist_N(u))\big)_{L^2(B)}
 \end{split}
\end{equation*}
to see that this term may be bounded
\begin{equation*}
 \begin{split}
   |\hat{I}_0|&\le C\|(|\nabla\partial^k_{\phi}u|+|\partial^k_{\phi}u\nabla u|)\varphi_k
   +|\partial^k_{\phi}u\nabla\varphi_k|\|_{L^2(B)}
   \|\nabla^{k+1}(dist_N(u))\varphi_k\|_{L^2(B)}.
 \end{split}
\end{equation*}
But by elliptic regularity we again have 
\begin{equation*}
 \begin{split}
  \|&\nabla^{k+1}(dist_N(u))\varphi_k\|_{L^2(B)}\\
  &\le\|\nabla^{k+1}(dist_N(u)\varphi_k)\|_{L^2(B)}
  +C\sum_{1\le j\le k+1}\|\nabla^{k+1-j}(dist_N(u))\nabla^j\varphi_k\|_{L^2(B)}\\
  &\le C\|\Delta(dist_N(u)\varphi_k)\|_{H^{k-1}(B)}
  +C\sum_{1\le j\le k+1}\|\nabla^{k+1-j}(dist_N(u))\nabla^j\varphi_k\|_{L^2(B)},
 \end{split}
\end{equation*}
where from \eqref{3.5} we can bound the first term on the right
\begin{equation*}
 \begin{split}
  \|\Delta&(dist_N(u))\varphi_k\|_{H^{k-1}(B)}
  \le\sum_{0\le j<k}\|\nabla^j\big(\nabla u\cdot d\nu(u)\nabla u\varphi_k\big)\|_{L^2(B)}\\
  &\le C\sum_{0\le j_0<k,\,1\le j_i\le k,\,\Sigma_{i\ge 0}j_i\le k+1}
  \|\Pi_i\nabla^{j_i}u\nabla^{j_0}\varphi_k\|_{L^2(B)}.
 \end{split}
\end{equation*}
Moreover, using that $dist_N(u))=0$ on $\partial B$, with the help of Poincar\'e's inequality
we find the bound 
\begin{equation*}
 \begin{split}
  \|dist_N(u)\nabla^{k+1}\varphi_k\|^2_{L^2(B)}
  \le CR_k^{-2k}\|\nabla(dist_N(u))\|^2_{L^2(B_{R_k}(z_0))}\le CR_0^{-2k}E(u).
 \end{split}
\end{equation*}
The remaining terms for $1\le j\le k$ can be estimated
\begin{equation*}
 \begin{split}
  \|\nabla^{k+1-j}(dist_N(u))\nabla^j\varphi_2\|_{L^2(B)}
  \le C\sum_{1\le j_i\le k,\,\Sigma_ij_i=k+1-j}\|\Pi_i\nabla^{j_i}u\nabla^j\varphi_2\|_{L^2(B)}
 \end{split}
\end{equation*}
via the chain rule. Thus, finally, we obtain the bound
\begin{equation*}
 \begin{split}
  \|\nabla^{k+1}&(dist_N(u))\varphi_k\|_{L^2(B)}\\
  &\le C\sum_{1\le j_0,j_i\le k,\,\Sigma_{i\ge 0}j_i\le k+1}
  \|\Pi_{i>0}\nabla^{j_i}u\nabla^{j_0}\varphi_2\|_{L^2(B)}+CR_0^{-2k}E(u_0).
 \end{split}
\end{equation*}
By Cauchy-Schwarz and Young's inequality thus we can bound
\begin{equation*}
 \begin{split}
 |\hat{I}_0|&\le\frac14\|\nabla\partial^k_{\phi}u\varphi_k\|^2_{L^2(B)}
 +C\|\partial^k_{\phi}u\nabla u\varphi_k\|^2_{L^2(B)}\\
 &\qquad+C\sum_{1\le j_i\le k,\,\Sigma_ij_i\le k+1}
  \|\Pi_{i>0}\nabla^{j_i}u\nabla^{j_0}\varphi_k\|^2_{L^2(B)}+CR_0^{-2k}E(u_0),
 \end{split}
\end{equation*}
and together with our above estimate for the terms $I_j$, $j\ge 1$, our claim follows.
\end{proof}

\begin{proposition}\label{prop4.10}
There exists a constant $\delta>0$ independent of $R_0>0$ 
such that whenever \eqref{4.10} holds then for any $T\le T_0$ with a constant 
$C_2=C_2(T,R,u_0)>0$ bounded by the terms on the right hand 
side in the statement of Proposition \ref{prop4.8} there holds the estimate
\begin{equation*}
 \begin{split}
  \sup_{0<t<T}\int_{\partial B}|u_{\phi\phi}(t)|^2\varphi_2^2\,d\phi
  +\int_0^T\int_B|\nabla&u_{\phi\phi}|^2\varphi_2^2\,dz\,dt\\
  &\le C_2\int_{\partial B}|u_{0,\phi\phi}|^2\varphi_2^2\,d\phi+C_2.
 \end{split}
\end{equation*}
\end{proposition}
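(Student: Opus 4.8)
The plan is to imitate the proofs of Propositions~\ref{prop4.4} and~\ref{prop4.5}, now in localized form: take Lemma~\ref{lemma4.9} with $k=2$ as the basic differential inequality and convert it into a Gronwall inequality for $Y(t):=\int_{\partial B}|u_{\phi\phi}(t)|^2\varphi_2^2\,d\phi$, whose associated dissipation is $D(t):=\int_B|\nabla u_{\phi\phi}(t)|^2\varphi_2^2\,dz$, both of which appear on the left of Lemma~\ref{lemma4.9}.

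First I would collect the auxiliary bounds. Proposition~\ref{prop4.8} gives $\sup_{0<t<T}\int_{\partial B}|u_\phi|^2\varphi_1^2\,d\phi\le C_2$ and $\int_0^T\!\!\int_B|\nabla u_\phi|^2\varphi_1^2\,dz\,dt\le C_2$; the pointwise inequality~\eqref{4.9} holds on $B_{R_0}(z_0)$; and, since $R_k=2^{-k}R_0$, we have $\varphi_1\equiv 1$ on $\mathrm{supp}\,\varphi_2\cup\mathrm{supp}\,\nabla\varphi_2$, in particular $\varphi_2\le\varphi_1$. Combining~\eqref{4.9} with these yields, for $0<t<T$, $\|\nabla^2u\,\varphi_2\|_{L^2(B)}^2\le C\,g(t)$ with $g(t):=\|\nabla u_\phi\varphi_1\|_{L^2(B)}^2+E(u_0)$, $\int_0^T g\,dt\le C_2$, and $\|\nabla^3u\,\varphi_2\|_{L^2(B)}^2\le C\big(D(t)+g(t)\big)$.

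Next I would estimate the right-hand side of Lemma~\ref{lemma4.9} for $k=2$. The second sum and the term $CR_0^{-4}E(u_0)$ are all supported where $\nabla\varphi_2\neq0$, hence inside $\{\varphi_1\equiv1\}$, involve at most one factor $\nabla^2u$ and a mild polynomial nonlinearity, and are bounded, using~\eqref{4.9}, the localized energy inequality Lemma~\ref{lemma2.2}, and the multiplicative inequality~\eqref{A.1} together with the non-concentration smallness $\delta$ from~\eqref{4.10}, by $C_2\big(g(t)+R_0^{-4}E(u_0)\big)$, an $L^1$-in-time quantity. The first sum is the list of genuinely nonlinear terms $\|\Pi_i\nabla^{j_i}u\,\varphi_2^2\|_{L^1(B)}$ with $1\le j_i\le 2$ and $\sum_i j_i\le 6$ (every factor is at most a $\nabla^2u$); for each of these I would apply~\eqref{A.1} or~\eqref{A.2} with the smallness~\eqref{4.10}, producing a factor $\|\nabla^3u\,\varphi_2\|_{L^2(B)}^2$, i.e.\ (via the second step) the dissipation $D(t)$ modulo the $g(t)$-error, to be absorbed on the left with coefficient $\le\tfrac14$, times lower-order factors of the type $\|\nabla^2u\,\varphi_2\|_{L^2(B)}^2$ and an $L^\infty$- (or high-$L^p$-) norm of $\nabla u$ near $z_0$. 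The latter I would control by the one-dimensional boundary Sobolev embedding $H^1(\partial B)\hookrightarrow L^\infty(\partial B)$ and harmonicity of $u$ and its derivatives, keeping the cut-off powers consistent, and, where the resulting bound is not yet subcritical, inserting a logarithmic Brezis--Gallouet factor exactly as in the proof of Proposition~\ref{prop4.5}. The upshot is a differential inequality of the shape $\frac{d}{dt}(1+Y)+\tfrac12 D\le C_2\,m(t)\,(1+Y)+C_2\,m(t)$ with $\int_0^T m\,dt\le C_2$ (modulo harmless iterated-logarithm corrections, handled as in Proposition~\ref{prop4.5}). Integrating this Gronwall inequality bounds $\sup_{0<t<T}Y(t)$ by $C_2(Y(0)+1)$, and reinserting that bound yields $\int_0^T D(t)\,dt\le C_2$, which is the assertion.

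The step I expect to be the main obstacle is the localization itself. Unlike in Propositions~\ref{prop4.4}--\ref{prop4.5}, where the relevant lower-order quantity was already bounded uniformly in time, here Proposition~\ref{prop4.8} supplies only a space--time bound on $\|\nabla u_\phi\varphi_1\|_{L^2(B)}$ (plus a uniform bound on the weaker boundary norm $\|u_\phi\varphi_1\|_{L^2(\partial B)}$); the Gronwall argument must therefore be closed with this space--time bound in the role of the integrable coefficient, as in Proposition~\ref{prop4.4}, and one likely needs, as an intermediate step, the corresponding localized uniform-in-time bound for $\nabla u_\phi$ (a $\varphi_1$-version of Proposition~\ref{prop4.4}). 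At the same time one has to be scrupulous with cut-off bookkeeping: every use of~\eqref{4.9} and of the dissipation $\|\nabla\partial^2_\phi u\,\varphi_2\|_{L^2(B)}^2$ must carry $\varphi_2$ to exactly the power delivered by Lemma~\ref{lemma4.9}, the cut-off--derivative errors must be shifted onto the larger weight $\varphi_1$ where everything is controlled, and the $L^\infty$-type bounds for $\nabla u$ — which in the global estimates rested on the uniform $H^2$-bound — must here be recovered purely from the boundary Sobolev embedding and harmonicity, at the price of the Brezis--Gallouet logarithm.
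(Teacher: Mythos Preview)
Your framework is right: start from Lemma~\ref{lemma4.9} with $k=2$, absorb the dissipation $D=\|\nabla\partial^2_\phi u\,\varphi_2\|^2_{L^2(B)}$ on the left, and close a Gronwall inequality in $Y=\|\partial^2_\phi u\,\varphi_2\|^2_{L^2(\partial B)}$ whose integrable-in-time coefficient comes from Proposition~\ref{prop4.8}. But the route you sketch through $L^\infty$-control of $\nabla u$ and a Brezis--Gallouet logarithm is a detour that runs straight into the obstacle you yourself flag: a localized analogue of Proposition~\ref{prop4.4} is not available (the global one rests on the maximum principle for $\nabla u$, which does not localize), and without it you cannot feed Brezis--Gallouet. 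In fact the paper's proof of Proposition~\ref{prop4.10} uses neither Brezis--Gallouet nor any $L^\infty$-bound on $\nabla u$.

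The device you are missing is an elliptic regularity step converting the \emph{interior} quantity $\|\nabla^2u\,\varphi_2\|^2_{L^2(B)}$ into the \emph{boundary} quantity $Y$. Since $u$ is harmonic, $|\Delta(u\varphi_2)|\le 2|\nabla u||\nabla\varphi_2|+C$, and standard elliptic theory on $B$ gives
\[
  \|\nabla^2u\,\varphi_2\|^2_{L^2(B)}
  \le C\|u\varphi_2\|^2_{H^2(\partial B)}+C\|\Delta(u\varphi_2)\|^2_{L^2(B)}+C
  \le C\,Y+C_1.
\]
After handling $J_1$ (the dominant term $|\nabla^2u|^3\varphi_2^2$ is split as $L^4\cdot L^4\cdot L^2$ via \eqref{A.2}, with $\varphi_2\le\varphi_1$ on the last factor) and $J_2$, one arrives at
\[
  \frac{d}{dt}Y+\tfrac12 D
  \;\le\; C\big(1+\|\nabla\partial_\phi u\,\varphi_2\|^2_{L^2(B)}\big)\,
      \|\nabla\partial_\phi u\,\varphi_1\|^2_{L^2(B)}+C_1.
\]
The elliptic step replaces the parenthesis by $C(1+Y)$, so the inequality reads $\frac{d}{dt}\log(1+Y)\le C\|\nabla\partial_\phi u\,\varphi_1\|^2_{L^2(B)}+C_1$, and Proposition~\ref{prop4.8} integrates the right-hand side directly. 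No uniform-in-time local bound on $\nabla u_\phi$ is needed; once the extra factor $\|\nabla^2u\,\varphi_2\|^2_{L^2(B)}$ has been traded for $Y$, the space--time bound from Proposition~\ref{prop4.8} is exactly enough.
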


\begin{proof}
For $k=2$ with the help of Young's inequality  we can bound 
\begin{equation*}
 \begin{split}
 J_1&=\sum_{1\le j_i\le k,\, \Sigma_ij_i\le 2k+2}\|\Pi_i\nabla^{j_i}u\varphi_k^2\|_{L^1(B)}\\
  &\quad\le C\|(|\nabla^2u|^3+|\nabla^2u|^2|\nabla u|^2+|\nabla^2u||\nabla u|^4
 +|\nabla u|^6+1)\varphi_2^2\|_{L^1(B)}\\
&\quad\le C\|(|\nabla^2u|^3+|\nabla u|^6+1)\varphi_2^2\|_{L^1(B)},
 \end{split}
\end{equation*}
and
\begin{equation*}
 \begin{split}
 J_2&=\sum_{1\le j_0,j_i\le k,\,\Sigma_{i\ge 0}j_i\le k+1}
  \|\Pi_{i>0}\nabla^{j_i}u\nabla^{j_0}\varphi_2\|^2_{L^2(B)}\\
 &\le C\|(|\nabla^2u|^2+|\nabla u|^4+1)|\nabla\varphi_2|^2
 +(|\nabla u|^2+1)|\nabla^2\varphi_2|^2\|_{L^1(B)}.
 \end{split}
\end{equation*}

Observing that $\varphi_1=1$ on the support of $\varphi_2$, by \eqref{A.2} for the first term
in $J_1$ we have
\begin{equation*}
 \begin{split}
 \||\nabla^2&u|^3\varphi_2^2\|_{L^1(B)}\le\|\nabla^2 u\varphi_2\|^2_{L^4(B)}
 \|\nabla^2 u\varphi_1\|_{L^2(B)}\\
 &\le C\|\nabla^2 u\varphi_2\|_{H^1(B)}\|\nabla^2 u\varphi_2\|_{L^2(B)}
 \|\nabla^2 u\varphi_1\|_{L^2(B)}\\
 &\le C(\|\nabla^3 u\varphi_2\|_{L^2(B)}
 +\|\nabla^2 u\varphi_1\|_{L^2(B)})
 \|\nabla^2 u\varphi_2\|_{L^2(B)}\|\nabla^2 u\varphi_1\|_{L^2(B)}.
 \end{split}
\end{equation*}
Moreover, arguing as in \eqref{A.1} for the function $|\nabla u|^6\varphi_2^2$ in place of 
$|v|^4\varphi^2$, we can bound
\begin{equation*}
 \begin{split}
 \int_B|\nabla&u|^6\varphi_2^2dz
 \le C\Big(\int_B\big(|\nabla^2 u||\nabla u|^2\varphi_2
 +|\nabla u|^3|\nabla\varphi_2|\big)dz\Big)^2\\
 &\le C\Big(\int_B|\nabla^2 u|^3\varphi_2^2dz\Big)^{2/3}
 \Big(\int_B|\nabla u|^3\varphi_2^{1/2}dz\Big)^{4/3}
 +C\Big(\int_B|\nabla u|^3|\nabla\varphi_2|\big)dz\Big)^2,
 \end{split}
\end{equation*}
where by H\"older's inequality we have
\begin{equation*}
 \begin{split}
 \int_B&|\nabla u|^3\varphi_2^{1/2}dz
 \le\Big(\int_B|\nabla u|^6\varphi_2^2dz\Big)^{1/4}
 \Big(\int_B|\nabla u|^2\varphi_1^2dz\Big)^{3/4}
 \end{split}
\end{equation*}
so that with Young's inequality we obtain
\begin{equation*}
 \begin{split}
 \int_B|\nabla&u|^6\varphi_2^2dz
 \le C\delta\Big(\int_B|\nabla^2 u|^3\varphi_2^2dz\Big)^{2/3}
 \Big(\int_B|\nabla u|^6\varphi_2^2dz\Big)^{1/3}\\
 &\qquad+C\Big(\int_B|\nabla u|^3|\nabla\varphi_2|\big)dz\Big)^2\\
 &\le\frac12\int_B|\nabla u|^6\varphi_2^2dz
 +C\int_B|\nabla^2 u|^3\varphi_2^2dz
 +C\Big(\int_B|\nabla u|^3|\nabla\varphi_2|\big)dz\Big)^2.
 \end{split}
\end{equation*}
With Young's inequality for suitable $\varepsilon>0$, and using \eqref{4.9}, 
we then can bound
\begin{equation*}
 \begin{split}
 J_1&\le C\|(|\nabla^2u|^3+1)\varphi_2^2\|_{L^1(B)}
 +C\||\nabla u|^3|\nabla\varphi_2|\|^2_{L^1(B)}
 \le\varepsilon\|\nabla^3 u\varphi_2\|^2_{L^2(B)}\\
 &\qquad\qquad+C(1+\|\nabla^2 u\varphi_2\|^2_{L^2(B)})
 \|\nabla^2 u\varphi_1\|^2_{L^2(B)}+C\||\nabla u|^3|\nabla\varphi_2|\|^2_{L^1(B)}\\
 &\le\frac12\|\nabla\partial^2_{\phi}u\varphi_2\|^2_{L^2(B)}
 +C(1+\|\nabla\partial_{\phi}u\varphi_2\|^2_{L^2(B)})
 \|\nabla\partial_{\phi}u\varphi_1\|^2_{L^2(B)}+C_1,
 \end{split}
\end{equation*}
where we also have estimated 
\begin{equation*}
 \begin{split}
 \||\nabla u|^3&|\nabla\varphi_2|\|^2_{L^1(B)}
 \le C\|\nabla u\varphi_1\|^4_{L^4(B)}\|\nabla u\varphi_1\|^2_{L^2(B)}\\
 &\le C\big(\|\nabla^2u\varphi_1\|^2_{L^2(B)}+E(u)\big)\|\nabla u\varphi_1\|^4_{L^2(B)}
 \le C\|\nabla\partial_{\phi}u\varphi_1\|^2_{L^2(B)}+C.
 \end{split}
\end{equation*}

Similarly, with \eqref{A.2} we have
\begin{equation*}
 J_2\le C\|\nabla^2 u\varphi_1\|^2_{L^2(B)}+C.
\end{equation*}
Thus, from Lemma \ref{lemma4.7} we obtain
\begin{equation}\label{4.11}
 \begin{split}
  \frac{d}{dt}\big(\|\partial^2_{\phi}&u\varphi_2\|^2_{L^2(\partial B)}\big)
  +\frac12\|\nabla\partial^2_{\phi}u\varphi_2\|^2_{L^2(B)}\\
  &\le C(1+\|\nabla\partial_{\phi}u\varphi_2\|^2_{L^2(B)})
 \|\nabla\partial_{\phi}u\varphi_1\|^2_{L^2(B)}+C.
 \end{split}
\end{equation}

Denote as $C_1=C_1(T,R,u_0)>0$ a constant bounded by the terms on the right hand 
side in the statements of Propositions \ref{prop4.8}.
By elliptic regularity, using that $|\Delta(u\varphi_2|\le 2|\nabla u\nabla\varphi_2|+C$
we can bound 
\begin{equation*}
 \begin{split}
 \|\nabla^2&u\varphi_2\|^2_{L^2(B)}
 \le\|u\varphi_2\|^2_{H^2(B)}+C\|\nabla u\nabla\varphi_2\|^2_{L^2(B)}+C\\
 &\le C\|u\varphi_2\|^2_{H^2(\partial B)}+\|\Delta(u\varphi_2)\|^2_{L^2(B)}
 +C\|\nabla u\nabla\varphi_2\|^2_{L^2(B)}+C\\
 &\le C\|\partial^2_{\phi}u\varphi_2\|^2_{L^2(\partial B)}+CE(u)+C_1.
 \end{split}
\end{equation*}
From \eqref{4.11} we then obtain the differential inequality
\begin{equation*}
 \begin{split}
 \frac{d}{dt}\big(1+\|\partial^2_{\phi}&u\varphi_2\|^2_{L^2(\partial B)}\big)
 \le C(1+\|\partial^2_{\phi}u\varphi_2\|^2_{L^2(\partial B)})
 \|\nabla\partial_{\phi}u\varphi_1\|^2_{L^2(B)}+C_1;
 \end{split}
\end{equation*}
that is,
\begin{equation*}
 \begin{split}
 \frac{d}{dt}\Big(\log\big(1+\|\partial^2_{\phi}&u\varphi_2\|^2_{L^2(\partial B)}\big)\Big)
 \le C\|\nabla\partial_{\phi}u\varphi_1\|^2_{L^2(B)}+C_1,
 \end{split}
\end{equation*}
and the right hand side is integrable in time by Proposition \ref{prop4.8}.
The claim follows.
\end{proof}

We continue by induction. 

\begin{proposition}\label{prop4.11}
There exists a constant $\delta>0$ independent of $R_0>0$ with the following property.
Whenever \eqref{4.10} holds, then for any $k\ge 3$, any smooth $u_0\in H^{1/2}(S^1;N)$, 
and any $T<T_0$, there holds
\begin{equation*}
  \sup_{0<t<T}\int_{\partial B}|\partial^k_{\phi}u(t)|^2\varphi_k^2d\phi
  +\int_0^T\int_B|\nabla\partial^k_{\phi}u|^2\varphi_k^2dz\,dt
  \le C_k\int_{\partial B}|\partial^k_{\phi}u_0|^2\varphi_k^2d\phi+C_k,
\end{equation*}
where we denote as $C_k=C_k(T,R,u_0)>0$ a constant bounded by the terms on the right hand 
side in the statement of the proposition for $k-1$.
\end{proposition}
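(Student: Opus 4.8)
The plan is to argue by induction on $k$, following the scheme of Proposition \ref{prop4.6} but with the cut-offs $\varphi_j$ in place and with Lemma \ref{lemma4.9} replacing Lemma \ref{lemma4.3}; the base case $k=2$ is Proposition \ref{prop4.10}, and the constant $\delta>0$ is the one already fixed there and in Proposition \ref{prop4.8} — no further smallness is needed at the higher levels. So fix $k_0\ge 2$, assume the assertion for $k_0$, and set $k=k_0+1$.

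The first step is to record a localized substitute for the bounds \eqref{4.8}. From the induction hypothesis at level $k_0$ (Proposition \ref{prop4.10} if $k_0=2$) we have $\sup_{0<t<T}\|\partial^{k_0}_\phi u\,\varphi_{k_0}\|^2_{L^2(\partial B)}\le C_k$ together with $\int_0^T\|\nabla\partial^{k_0}_\phi u\,\varphi_{k_0}\|^2_{L^2(B)}\,dt\le C_k$. Since $u(t)$ is harmonic and its boundary trace is, near $z_0$, uniformly bounded in $H^{k_0}(\partial B)$, the boundary and interior estimates for harmonic functions give a uniform bound for $u(t)$ in $H^{k_0+1/2}$ near $z_0$; the two-dimensional embeddings $H^{1/2}(B)\hookrightarrow L^4(B)$ and $H^{3/2}(B)\hookrightarrow C^0(\overline B)$ then yield, uniformly in $0<t<T$,
\begin{equation*}
 \|\nabla^{k_0}u\,\varphi_{k_0}\|^2_{L^4(B)}+\sum_{1\le j\le k_0-1}\|\nabla^j u\,\varphi_{k_0}\|^2_{L^\infty(B)}\le C_k,
\end{equation*}
and, via \eqref{4.9}, also $\int_0^T\|\nabla^{k_0+1}u\,\varphi_{k_0}\|^2_{L^2(B)}\,dt\le C_k$. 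Set $\Phi(t)=1+\|\nabla\partial^{k_0}_\phi u\,\varphi_{k_0}(t)\|^2_{L^2(B)}$, so that $\int_0^T\Phi\,dt\le C_k$. I also record that $\varphi_j\equiv 1$ on $\operatorname{supp}\varphi_{j+1}$, which lets every cut-off below be promoted one level for free, and I keep in mind that — in contrast to the global situation — the uniform-in-time information at level $k_0$ is one derivative weaker, controlling $\nabla^{k_0}u$ in $L^4$ but not $\nabla^{k_0+1}u$ in $L^2$, which is why $\Phi(t)$ must be carried along as a time-dependent weight.

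Next I apply Lemma \ref{lemma4.9} at level $k$ and estimate its right-hand side $J_1+J_2+CR_0^{-2k}E(u_0)$. In $J_2$ each factor $\nabla^{j_i}u$ has total order $\le k$ and at least one derivative falls on $\varphi_k$; estimating $\|\nabla^{j_0}\varphi_k\|_{L^\infty}\le CR_0^{-j_0}$, pulling out the factors of order $\le k_0-1$ in $L^\infty$ against the constants of Step 1 and handling the single possible top factor $\nabla^k u$ through \eqref{4.9}, one gets $J_2\le C_k\Phi(t)+C_k$, and likewise $CR_0^{-2k}E(u_0)\le C_k$. The sum $J_1$ is the heart of the matter: every product $\Pi_i\nabla^{j_i}u$ there has $1\le j_i\le k$ and $\Sigma_i j_i\le 2k+2$. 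All factors of order $\le k_0-1$ — which, since $k\ge 3$, includes every factor of order $1$ — are taken out in $L^\infty(\operatorname{supp}\varphi_k)$ against constants $C_k$ (this already disposes of the purely low-order products such as $|\nabla u|^{2k+2}$); a factor of order exactly $k_0$ is controlled in $L^4$ by a constant $C_k$; and since $3k>2k+2$ for $k\ge 3$, at most two factors of the top order $k$ can occur. Those remaining high-order factors I treat exactly as $|\nabla^2 u|^3$ and $|\nabla u|^6$ are treated in Proposition \ref{prop4.10}: by the multiplicative inequality \eqref{A.2} (using $\varphi_{k-1}\equiv 1$ on $\operatorname{supp}\varphi_k$) and \eqref{4.9} one has $\|\nabla^k u\,\varphi_k\|_{L^2(B)}\le C\sqrt{\Phi(t)}+C_k$ and $\|\nabla^{k+1}u\,\varphi_k\|_{L^2(B)}\le C\|\nabla\partial^k_\phi u\,\varphi_k\|_{L^2(B)}+C\sqrt{\Phi(t)}+C_k$, so that, after Young's inequality and the elliptic-regularity bound $\|\nabla^k u\,\varphi_k\|^2_{L^2(B)}\le C_k+C\|\partial^k_\phi u\,\varphi_k\|^2_{L^2(\partial B)}$ used once more as in Proposition \ref{prop4.10}, every term of $J_1$ is bounded by $\tfrac14\|\nabla\partial^k_\phi u\,\varphi_k\|^2_{L^2(B)}$ plus $C_k\bigl(1+\Phi(t)\bigr)\bigl(1+\|\partial^k_\phi u\,\varphi_k\|^2_{L^2(\partial B)}\bigr)+C_k$.

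Feeding these estimates into Lemma \ref{lemma4.9} and absorbing the $\tfrac14\|\nabla\partial^k_\phi u\,\varphi_k\|^2_{L^2(B)}$ term on the left gives the differential inequality
\begin{equation*}
 \frac{d}{dt}\bigl(1+\|\partial^k_\phi u\,\varphi_k\|^2_{L^2(\partial B)}\bigr)+\tfrac12\|\nabla\partial^k_\phi u\,\varphi_k\|^2_{L^2(B)}\le C_k\bigl(1+\Phi(t)\bigr)\bigl(1+\|\partial^k_\phi u\,\varphi_k\|^2_{L^2(\partial B)}\bigr)+C_k .
\end{equation*}
Writing $g(t)=1+\|\partial^k_\phi u\,\varphi_k(t)\|^2_{L^2(\partial B)}\ge 1$, this reads $\tfrac{d}{dt}\log g\le C_k\bigl(2+\Phi(t)\bigr)$, and since $\int_0^T(2+\Phi)\,dt\le C_k$ we obtain $\sup_{0<t<T}g(t)\le C_k\,g(0)=C_k\bigl(1+\|\partial^k_\phi u_0\,\varphi_k\|^2_{L^2(\partial B)}\bigr)$, exactly as in the proof of Proposition \ref{prop4.10}; reinserting this bound and integrating the differential inequality in time then also controls $\int_0^T\|\nabla\partial^k_\phi u\,\varphi_k\|^2_{L^2(B)}\,dt$, which completes the induction. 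The main obstacle is the combinatorial bookkeeping in $J_1$: one has to verify that in every admissible product at most two factors lie outside the $L^\infty$-controlled range and that these genuinely high-order factors always recombine — through \eqref{A.2}, \eqref{4.9} and the nesting $\varphi_k\prec\varphi_{k-1}$ of the cut-offs — into at most one factor of $\|\nabla\partial^k_\phi u\,\varphi_k\|_{L^2(B)}$, hence absorbable, times lower-order quantities, and never into a quadratic expression in the dissipation; as in Proposition \ref{prop4.10} this costs no new smallness of $\delta$ and relies only on using \eqref{4.9} to trade the missing radial $\varphi$-derivative for tangential ones.
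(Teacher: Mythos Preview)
Your proof is correct and follows essentially the same approach as the paper: induction on $k$ with Proposition \ref{prop4.10} as base case, Lemma \ref{lemma4.9} providing the differential inequality, elliptic regularity to convert $\|\partial^k_\phi u\,\varphi_k\|_{L^2(\partial B)}$ into control of $\|\nabla^k u\,\varphi_k\|_{L^2(B)}$, and a log-Gronwall argument driven by the time-integrability of $\Phi(t)=1+\|\nabla\partial^{k_0}_\phi u\,\varphi_{k_0}\|^2_{L^2(B)}$ from the induction hypothesis. The one substantive difference is that you invoke the sharp half-integer gain for harmonic functions to obtain a uniform-in-time $H^{k_0+1/2}$ bound near $z_0$, hence $\nabla^{k_0}u\in L^4$ and $\nabla^{k_0-1}u\in L^\infty$ by \emph{constants} $C_k$; the paper instead uses the cruder estimate $\|v\|_{H^k(B)}\le C\|v\|_{H^k(\partial B)}+C\|\Delta v\|_{H^{k-2}(B)}$ and only obtains these quantities bounded by $C\|\partial^k_\phi u\,\varphi_k\|^2_{L^2(\partial B)}+C_k$, which is why the paper has to work out the case $k=3$ explicitly before declaring $k\ge 4$ ``similar (but simpler)''. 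Your sharper a-priori bounds make the combinatorics in $J_1$ cleaner, but the structure of the argument is the same.
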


\begin{proof}
By Proposition \ref{prop4.10} the claimed result holds true for $k=2$. 
Suppose the claim holds true for some $k_0\ge 2$ and let $k=k_0+1$. 
Note that by elliptic regularity, as in the proof of Proposition \ref{prop4.10}
we can bound 
\begin{equation*}
 \begin{split}
 \|\nabla^k&u\varphi_k\|^2_{L^2(B)}
 \le\|u\varphi_k\|^2_{H^k(B)}+C\sum_{j<k}\|\nabla^ju\nabla^{k-j}\varphi_k\|^2_{L^2(B)}\\
 &\le C\|u\varphi_k\|^2_{H^k(\partial B)}+C\|\Delta(u\varphi_k)\|^2_{H^{k-2}(B)}
 +C\sum_{j<k}\|\nabla^ju\nabla^{k-j}\varphi_k\|^2_{L^2(B)}\\
 &\le C\|\partial^k_{\phi}u\varphi_k\|^2_{L^2(\partial B)}
 +C\sum_{j<k}\|\nabla^ju\nabla^{k-j}\varphi_k\|^2_{L^2(B)}+C_k.
 \end{split}
\end{equation*}
By induction hypothesis and Sobolev's embedding
$H^2(B)\hookrightarrow W^{1,4}\cap C^0(\bar{B})$ for $0\le t<T$ we then have the uniform bounds 
\begin{equation*}
 \begin{split}
  \|\nabla^{k_0}u\varphi_{k_0}\|^2_{L^2(B)}&+\|\nabla^{k_0-1}u\varphi_{k_0}\|^2_{L^4(B)}
  +\sum_{j=1}^{k_0-2}\|\nabla^{j}u\varphi_{k_0}\|^2_{L^{\infty}(B)}\le C_k,
 \end{split}
\end{equation*}
and it follows that 
\begin{equation*}
 \begin{split}
 \|\nabla^k&u\varphi_k\|^2_{L^2(B)}+\|\nabla^{k_0}u\varphi_k\|^2_{L^4(B)}
  +\|\nabla^{k_0-1}u\varphi_k\|^2_{L^{\infty}(B)}
 \le C\|\partial^k_{\phi}u\varphi_k\|^2_{L^2(\partial B)}+C_k.
 \end{split}
\end{equation*}

Again let 
\begin{equation*}
 \begin{split}
  J_1&:=\sum_{1\le j_i\le k,\, \Sigma_ij_i=2k+2}
  \|\Pi_i\nabla^{j_i}u\varphi_k^2\|_{L^1(B)}\\
  &\le\|\big(|\nabla^ku|^2(|\nabla^2u|+|\nabla u|^2)
  +|\nabla^ku||\nabla^{k_0}u||\nabla^3u|+\dots+|\nabla u|^{2k+2}\big)
  \varphi_k^2\|_{L^1(B)}.
 \end{split}
\end{equation*}
and set 
\begin{equation*}
 \begin{split}
 J_2&=\sum_{1\le j_0,j_i\le k,\,\Sigma_{i\ge 0}j_i\le k+1}
  \|\Pi_{i>0}\nabla^{j_i}u\nabla^{j_0}\varphi_2\|^2_{L^2(B)}.
 \end{split}
\end{equation*}

Suppose $k_0=2$. Recalling that $\varphi_k=\varphi_k\varphi_{k_0}$, 
we can bound the listed terms
\begin{equation*}
 \begin{split}
  \||\nabla^3&u|^2(|\nabla^2u|+|\nabla u|^2)\varphi_3^2\|_{L^1(B)}\\
  &\le\|\nabla^3u\varphi_3\|^2_{L^4(B)}(\|\nabla^2u\varphi_2\|_{L^2(B)}
  +\|\nabla u\varphi_2\|^2_{L^4(B)})\\
  &\le C_3\|\nabla\partial^3_{\phi}u\varphi_3\|_{L^2(B)}\|\nabla^3u\varphi_3\|_{L^2(B)}
  + C_3\|\nabla^3u\varphi_2\|^2_{L^2(B)}+C_3\\
  &\le C_3\|\nabla\partial^3_{\phi}u\varphi_3\|_{L^2(B)}
  \|\partial^3_{\phi}u\varphi_3\|_{L^2(\partial B)}
  + C_3\|\nabla\partial^2_{\phi}u\varphi_2\|^2_{L^2(B)}+C_3\\
  &\le\varepsilon\|\nabla\partial^3_{\phi}u\varphi_3\|^2_{L^2(B)}
  +C_3\|\partial^3_{\phi}u\varphi_3\|^2_{L^2(\partial B)}
  + C_3\|\nabla\partial^2_{\phi}u\varphi_2\|^2_{L^2(B)}+C_3,
 \end{split}
\end{equation*}
and 
\begin{equation*}
 \begin{split}
  \||\nabla u|^8\varphi_3^2\|_{L^1(B)}
  &\le\|\nabla u\varphi_3\|^2_{L^{\infty}(B)}\|\nabla u\varphi_2\|^6_{L^6(B)}\\
  &\le C_3\|\partial^3_{\phi}u\varphi_3\|^2_{L^2(\partial B)}+C_3,
 \end{split}
\end{equation*}
respectively. 
Here we also have used \eqref{A.1}, \eqref{A.2} to bound  
\begin{equation*}
 \begin{split}
  \|\nabla u&\varphi_2\|^3_{L^6(B)}\le\|\nabla(|\nabla u|^3\varphi^3_2)\|_{L^1(B)}\\
  &\le C\|(|\nabla^2u|\varphi_2
  +|\nabla u||\nabla\varphi_2|)|\nabla u|^2\varphi^2_2\|_{L^1(B)}\\
  &\le C(\|\nabla^2u\varphi_2\|_{L^2(B)}+\|\nabla u\nabla\varphi_2\|_{L^2(B)})
  \|\nabla u\varphi_2\|^2_{L^4(B)}\\
  &\le C(\|\nabla^2u\varphi_2\|_{L^2(B)}+\|\nabla u\nabla\varphi_2\|_{L^2(B)})^2
  \|\nabla u\varphi_2\|_{L^2(B)}\le C_3.
 \end{split}
\end{equation*}
Similarly, we can bound the remaining terms and the terms in $J_2$ to obtain
\begin{equation*}
 \begin{split}
  \frac{d}{dt}\big(\|\partial^3_{\phi}&u\varphi_3\|^2_{L^2(\partial B)}\big)
  +\frac12\|\nabla\partial^3_{\phi}u\varphi_3\|^2_{L^2(B)}\\
  &\le C_3(1+\|\partial^3_{\phi}u\varphi_3\|^2_{L^2(\partial B)})
  (1+\|\nabla\partial^2_{\phi}u\varphi_2\|^2_{L^2(B)})+C_3
 \end{split}
\end{equation*}
from Lemma \ref{lemma4.9} and then 
\begin{equation*}
 \begin{split}
 \frac{d}{dt}\Big(\log\big(1+\|\partial^3_{\phi}&u\varphi_2\|^2_{L^2(\partial B)}\big)\Big)
 \le C_3(1+\|\nabla\partial^2_{\phi}u\varphi_2\|^2_{L^2(B)}),
 \end{split}
\end{equation*}
where the right hand side is integrable in time by Proposition \ref{prop4.10}.
The claim for $k=3$ thus follows.

For $k\ge 4$ the analysis is similar (but simpler) and may be left to the reader.
\end{proof}

\section{Local existence}\label{Local existence}
In order to show local existence we approximate the flow equation \eqref{1.3}
by the equation 
\begin{equation}\label{5.1}
   u_t=-(\varepsilon+d\pi_N(u)) u_r\hbox{ on } \partial B.
\end{equation}
where $\varepsilon>0$ and where we smoothly extend the nearest-neighbor projection $\pi_N$,
originally defined only in the $\rho$-neighborhood $N_{\rho}$ of $N$, to the whole ambient $\R^n$.
Our aim then is to show that for given smooth initial data $u_0$ 
the evolution problem \eqref{5.1}, \eqref{1.4} admits a smooth solution $u_{\varepsilon}$ 
which remains uniformly smoothly bounded on a uniform time interval as 
$\varepsilon\downarrow 0$.
Fixing some $0<\varepsilon<1/2$, we show existence for the problem 
\eqref{5.1} with data \eqref{1.4} by means of a fixed-point argument.

To set up the argument, fix smooth initial data $u_0\colon S^1\to N$
with harmonic extension $u_0\in C^{\infty}(\bar{B};\R^n)$ and some $k\ge 2$. 
For suitable $T>0$ to be determined let 
\begin{equation*}
   X=L^{\infty}\big([0,T];H^{k+1}(B;\R^n)\big)\cap H^1(S^1\times [0,T];\R^n)
\end{equation*}
and set 
\begin{equation*}
 \begin{split}
   V=\{v & \in X;\;v(0)=u_0,\ 
   \Delta v(t)=0\hbox{ in } B \hbox{ for } 0\le t\le T,\\ 
   &\|v\|^2_X=\sup_{0\le t\le T}\|v(t)\|^2_{H^{k+1}(B)}
   +\int_0^T\int_{S^1}|v_t|^2d\phi\,dt\le 4R_0^2\},
 \end{split}
\end{equation*}
where $R_0=\|u_0\|_{H^{k+1}(B)}$. 
We endow the space $V$ with the metric derived from the semi-norm 
\begin{equation*}
   |v|^2_X=\sup_{0\le t\le T}\|\nabla v(t)\|^2_{L^2(B)}+
   \int_0^T\int_{S^1}|v_t|^2d\phi\,dt.
\end{equation*}
Note that this metric is positive definite on $V$ in view of the initial 
condition that we impose.

\begin{lemma}\label{lemma5.1}
$V$ is a complete metric space.
\end{lemma}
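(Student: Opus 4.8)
The plan is to follow the usual scheme for such spaces. Let $(v_j)_{j\in\N}$ be a Cauchy sequence in $(V,|\cdot|_X)$; I will produce a limit $v\in V$ with $|v_j-v|_X\to0$. The seminorm $|\cdot|_X$ controls exactly two quantities, and I exploit each separately. First, $(\partial_tv_j)_j$ is Cauchy in $L^2(S^1\times[0,T];\R^n)$, hence converges strongly to some $w\in L^2(S^1\times[0,T];\R^n)$. Second, $\sup_{0\le t\le T}\|\nabla v_i(t)-\nabla v_j(t)\|_{L^2(B)}^2\to0$ as $i,j\to\infty$, so for each fixed $t$ the sequence $\bigl(\nabla v_j(t)\bigr)_j$ is Cauchy in $L^2(B;\R^{2n})$ and converges, uniformly in $t$, to some $G(t)\in L^2(B;\R^{2n})$; since each $\nabla v_j(t)$ is componentwise harmonic and $L^2$-limits of harmonic functions are harmonic, $G(t)$ is harmonic, and $\|G(t)\|_{L^2(B)}\le 2R_0$ for all $t$.

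Next I identify the limit through its boundary traces. Because every $v_j$ lies in $H^1(S^1\times[0,T])$ and $v_j(0)=u_0$, the map $t\mapsto v_j(t)|_{S^1}$ is absolutely continuous from $[0,T]$ into $L^2(S^1)$ with $v_j(t)|_{S^1}=u_0|_{S^1}+\int_0^t\partial_tv_j(\tau)|_{S^1}\,d\tau$; by Cauchy--Schwarz this converges, uniformly in $t$, to $g(t):=u_0|_{S^1}+\int_0^tw(\tau)\,d\tau$ in $L^2(S^1)$. Let $v(t)$ be the harmonic extension of $g(t)$ to $B$. Continuity of the harmonic extension $L^2(S^1)\to L^2(B)$ then gives $v_j(t)\to v(t)$ in $L^2(B)$, uniformly in $t$; comparing distributional gradients with the previous step yields $\nabla v(t)=G(t)$. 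Moreover $v(0)=u_0$ (since $g(0)=u_0|_{S^1}$ and $u_0$ is harmonic), $\Delta v(t)=0$ in $B$ by construction, and $\partial_tv=w$.

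It remains to check $v\in V$ and that $(v_j)$ converges to $v$ in the metric. For fixed $t$ the sequence $\bigl(v_j(t)\bigr)_j$ is bounded in $H^{k+1}(B)$ by $2R_0$ and converges to $v(t)$ in $L^2(B)$, so $v_j(t)\rightharpoonup v(t)$ weakly in $H^{k+1}(B)$; hence $v(t)\in H^{k+1}(B)$ with $\|v(t)\|_{H^{k+1}(B)}\le\liminf_j\|v_j(t)\|_{H^{k+1}(B)}$, and $t\mapsto v(t)$ is strongly measurable into $H^{k+1}(B)$ by Pettis' theorem. The trace estimate $\|\partial_\phi v(t)\|_{L^2(S^1)}\le C\|v(t)\|_{H^{k+1}(B)}$ (valid since $k\ge2$), together with $\partial_tv=w\in L^2(S^1\times[0,T])$, shows $v\in H^1(S^1\times[0,T])$; thus $v\in X$. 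For the size constraint, lower semicontinuity gives $\sup_t\|v(t)\|_{H^{k+1}(B)}^2\le\liminf_j\sup_t\|v_j(t)\|_{H^{k+1}(B)}^2$, while strong $L^2$ convergence gives $\int_0^T\!\!\int_{S^1}|\partial_tv|^2\,d\phi\,dt=\lim_j\int_0^T\!\!\int_{S^1}|\partial_tv_j|^2\,d\phi\,dt$; adding these and using $\|v_j\|_X^2\le4R_0^2$ for all $j$ yields $\|v\|_X^2\le4R_0^2$, so $v\in V$. Finally, $|v_j-v|_X^2=\sup_t\|\nabla v_j(t)-G(t)\|_{L^2(B)}^2+\|\partial_tv_j-w\|_{L^2(S^1\times[0,T])}^2\to0$ by the two convergences established at the start.

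The only point that is not entirely routine is the recovery of $v\in L^{\infty}([0,T];H^{k+1}(B))$ with the correct bound: the metric $|\cdot|_X$ does not control the $H^{k+1}(B)$-norm at all, so this regularity must be extracted solely from the fact that \emph{every} element of $V$ satisfies the uniform bound $\|v_j\|_X\le2R_0$, via weak compactness of bounded sets of $H^{k+1}(B)$ at each frozen time and weak lower semicontinuity of the norm. Once this is in place, positive definiteness of $|\cdot|_X$ on $V$ (already noted, using $v(0)=u_0$ and harmonicity) makes $v$ the unique limit, so $V$ is complete.
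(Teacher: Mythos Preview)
Your proof is correct and complete, but it takes a different and more constructive route than the paper. The paper's argument is a three-line application of abstract weak-$*$ compactness: by Banach--Alaoglu a subsequence $v_m\rightharpoonup v$ weakly-$*$ in $L^{\infty}([0,T];H^{k+1}(B))$ with $v_{m,t}\rightharpoonup v_t$ weakly in $L^2(S^1\times[0,T])$; weak lower semicontinuity of $\|\cdot\|_X$ then gives $v\in V$, and weak lower semicontinuity of the seminorm $|v_l-\cdot|_X$ gives $|v_l-v|_X\le\limsup_m|v_l-v_m|_X\to0$. You instead exploit the Cauchy property in the two components of $|\cdot|_X$ to obtain \emph{strong} convergence of $\partial_tv_j$ in $L^2(S^1\times[0,T])$ and of $\nabla v_j(t)$ in $L^2(B)$ uniformly in $t$, reconstruct the limit $v(t)$ explicitly as the harmonic extension of $u_0|_{S^1}+\int_0^t w$, and only at the end invoke weak compactness pointwise in $t$ to recover the $H^{k+1}$-bound. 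Your approach is longer but makes the mechanism transparent and avoids passing to a subsequence; the paper's approach is terse and leans on the single observation that both the $X$-norm and the $X$-seminorm are lower semicontinuous under the weak-$*$ topology already furnished by the uniform bound $\|v_j\|_X\le 2R_0$.
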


\begin{proof} 
Let $(v_m)_{m\in\N}\subset V$ with $|v_l-v_m|_X\to 0$ ($l,m\to\infty$).
By the theorem of Banach-Alaoglu a subsequence $v_m\rightharpoondown v$ 
weakly-$*$ in $L^{\infty}\big([0,T];H^{k+1}(B)\big)$ 
with $v_{m,t}\to v_t$ weakly in $L^2([0,T]\times S^1)$, 
and by weak lower semi-continuity of the norm there holds 
\begin{equation*}
   \|v\|^2_X\le\limsup_{m\to\infty}\|v_m\|^2_X\le 4R_0^2.
\end{equation*}
Moreover, we have $\Delta v(t)=0$ for all $0\le t\le T$ and $v(0)=u_0$ by 
compactness of the trace operator $H^1(S^1\times [0,T])\ni u\mapsto u(0)\in L^2(S^1)$.
Hence $v\in V$. 

Moreover, we have 
\begin{equation*}
  |v_l-v|_X\le\limsup_{m\to\infty}|v_l-v_m|_X\to 0\ \hbox{ as }l\to\infty.
\end{equation*}
\end{proof}

\begin{lemma}\label{lemma5.2}
There is $T_2>0$ such that for any $T\le T_2$, any $v\in V$ there is a solution 
$u=\Phi(v)\in V$ of the equation
\begin{equation}\label{5.2}
   u_t=-(\varepsilon+d\pi_N(v)) u_r\hbox{ on } \partial B\times[0,T_2[,
\end{equation}
satisfying \eqref{1.4}.
\end{lemma}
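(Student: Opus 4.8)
The plan is to treat \eqref{5.2} as a \emph{linear}, non-degenerate parabolic problem for the boundary trace of $u$, solve it by a Galerkin scheme, and then verify the ball condition defining $V$ by energy estimates, choosing $T_2$ small. First I would record what $v\in V$ buys: since $k\ge 2$, Sobolev's embedding $H^{k+1}(B)\hookrightarrow C^0(\bar B)$ shows that $v(t)$ stays in a fixed compact subset of $\R^n$ for all $t$, on which $\pi_N$ and its derivatives are bounded; moreover $v(t)|_{\partial B}\in H^{k+1/2}(S^1)$ with norm $\le CR_0$, and, $H^{k+1/2}(S^1)$ being an algebra stable under composition with smooth maps, the coefficient $A(t):=\varepsilon+d\pi_N(v(t))$ is a symmetric-matrix-valued field on $S^1$, of class $L^{\infty}$ in $t$ and $H^{k+1/2}$ in $\phi$ uniformly in $t$, with eigenvalues in $[\varepsilon,1+\varepsilon]$. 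Seeking $u(t)$ harmonic on $B$, by \eqref{1.2} equation \eqref{5.2} becomes the uniformly parabolic linear equation $u_t+A(t)u_r=0$ on $S^1$ for the trace, $\partial_r$ being the Dirichlet-to-Neumann map.

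For existence I would truncate in the Fourier modes on $S^1$: the projected equations $\partial_tu_m+P_m\big(A(t)\partial_ru_m\big)=0$, $u_m(0)=P_mu_0$, are linear ODE systems with $L^{\infty}_t$ coefficients, solvable by Carath\'eodory, and the $m$-uniform estimates below then give, via Banach-Alaoglu and Aubin-Lions, a limit $u=\Phi(v)$ solving \eqref{5.2}, \eqref{1.4} with the stated regularity (alternatively one invokes the standard theory of linear parabolic equations). Uniqueness of $\Phi(v)$ follows by testing the equation for the difference $w$ of two solutions, which has zero data: from $w_t+A(t)w_r=0$, $w(0)=0$, one gets $\frac{d}{dt}\big(\|w\|^2_{L^2(S^1)}+E(w)\big)\le C\big(\|w\|^2_{L^2(S^1)}+E(w)\big)$ by Cauchy-Schwarz and Young (using $\|w_r\|_{L^2(\partial B)}=\|w_{\phi}\|_{L^2(\partial B)}$), hence $w\equiv 0$ by Gronwall.

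The core is the a priori estimate $\|u\|^2_X\le 4R_0^2$ for $T$ small. The basic energy identity is exactly as in Lemma \ref{lemma2.1}: $\frac{d}{dt}E(u)=\int_{\partial B}u_r\cdot u_t=-\int_{\partial B}u_r\cdot A(t)u_r\,d\phi\le-\varepsilon\|u_r\|^2_{L^2(\partial B)}$, so $\sup_tE(u(t))\le E(u_0)$. For the $k$-th derivatives I would argue as in the proof of Lemma \ref{lemma4.3}: writing $d\pi_N(v)u_r=u_r-\nu(v)\,\nu(v)\cdot u_r$, using harmonicity of $\partial^{2k}_{\phi}u$ and integrating by parts $k$ times in $\phi$,
\begin{equation*}
 \frac12\frac{d}{dt}\|\nabla\partial^k_{\phi}u\|^2_{L^2(B)}
 =-(1+\varepsilon)\|\partial^k_{\phi}u_r\|^2_{L^2(S^1)}
 +\sum_{j=0}^{k}\binom{k}{j}\big(\partial^k_{\phi}u_r,\ \partial^j_{\phi}\big(\nu(v)\nu(v)\big)\,\partial^{k-j}_{\phi}u_r\big)_{L^2(S^1)}.
\end{equation*}
The decisive point is that the $j=0$ term is exactly $\|\nu(v)\cdot\partial^k_{\phi}u_r\|^2_{L^2(S^1)}\le\|\partial^k_{\phi}u_r\|^2_{L^2(S^1)}$ (the $\nu_i(v)$ being orthonormal), so it is swallowed by the non-degenerate part and leaves the \emph{coercive} term $-\varepsilon\|\partial^k_{\phi}u_r\|^2_{L^2(S^1)}$; no analogue of the signed-distance device of Lemma \ref{lemma4.3} is needed. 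The terms with $1\le j\le k$ carry at least one $\phi$-derivative on $d\pi_N(v)$, and by Sobolev embeddings on $S^1$, harmonicity of $u$, and $\|v(t)\|_{H^{k+1}(B)}\le 2R_0$ they are bounded by $C(R_0)\|\partial^k_{\phi}u_r\|_{L^2(S^1)}\|u(t)\|_{H^{k+1}(B)}$. Absorbing these into the coercive term by Young, and using $\|u(t)\|^2_{H^{k+1}(B)}\le C\|\nabla\partial^k_{\phi}u(t)\|^2_{L^2(B)}+C\|u(t)\|^2_{L^2(\partial B)}+CE(u_0)$ (from \eqref{4.2}, \eqref{4.3}) together with an elementary bound for $\frac{d}{dt}\|u(t)\|^2_{L^2(\partial B)}$, one closes a Gronwall inequality of the form $\frac{d}{dt}\|u(t)\|^2_{H^{k+1}(B)}\le C(R_0,\varepsilon)\big(1+\|u(t)\|^2_{H^{k+1}(B)}\big)$, whence $\sup_{[0,T]}\|u(t)\|^2_{H^{k+1}(B)}\le\big(R_0^2+C(R_0,\varepsilon)T\big)e^{C(R_0,\varepsilon)T}$. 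Finally $\int_0^T\!\!\int_{S^1}|u_t|^2\,d\phi\,dt\le(1+\varepsilon)^2\int_0^T\!\!\int_{S^1}|u_r|^2\le C(1+\varepsilon)^2T\sup_{[0,T]}\|u(t)\|^2_{H^{k+1}(B)}$, since $\int_{S^1}|u_r|^2=\int_{S^1}|u_{\phi}|^2\le C\|u\|^2_{H^{k+1}(B)}$ by \eqref{3.7}. Choosing $T_2=T_2(R_0,\varepsilon)>0$ small enough, first so that the Gronwall bound gives $\sup_{[0,T_2]}\|u(t)\|^2_{H^{k+1}(B)}\le 3R_0^2$ and then so that the last integral is $\le R_0^2$, we obtain $\|u\|^2_X\le 4R_0^2$; as $u(t)$ is harmonic by construction and $u(0)=u_0$, this shows $u=\Phi(v)\in V$ for every $T\le T_2$.

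The main obstacle is the $k$-th derivative estimate, and in particular checking that the only genuinely top-order contribution after differentiating $k$ times in $\phi$ — namely $\|\nu(v)\cdot\partial^k_{\phi}u_r\|^2_{L^2(S^1)}$ — is \emph{strictly} dominated by the coercive term supplied by the regularization: this is precisely why one regularizes \eqref{1.3} by $\varepsilon+d\pi_N$ rather than by, say, $(1+\varepsilon)d\pi_N$, and why the bookkeeping of $1+\varepsilon$ versus $1$ must be done with care; it is also what makes the subtler argument of Lemma \ref{lemma4.3} unnecessary here. A secondary point needing attention is that, since $A(t)$ is only $L^{\infty}$ in time, the a priori bounds must be read in integrated form — in particular $\int_0^T\|\partial^k_{\phi}u_r\|^2_{L^2(S^1)}\,dt$ is finite although $\|\partial^k_{\phi}u_r(t)\|_{L^2(S^1)}$ need not stay bounded up to $t=0$ — which is harmless because this term carries the favourable sign.
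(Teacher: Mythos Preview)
Your proposal is correct and follows essentially the same route as the paper: Galerkin approximation in the Steklov/Fourier basis, the basic energy identity to control $E(u)$ and $\int|u_t|^2$, and a higher-order identity obtained by testing with $\partial_\phi^{2k}u_r$ to control $\|\nabla\partial_\phi^k u\|_{L^2(B)}$. The only cosmetic differences are that the paper keeps the $j=0$ contribution in the form $-\|d\pi_N(v)\partial_\phi^k u_r\|^2$ rather than your equivalent $-\|\partial_\phi^k u_r\|^2+\|\nu(v)\cdot\partial_\phi^k u_r\|^2$, and it handles the $j\ge 1$ commutator terms via an Ehrling-type interpolation $\|\partial_\phi^{k-j}u_r\|_{L^\infty}\le\delta\|\partial_\phi^k u_r\|_{L^2}+C(\delta)\|u_r\|_{L^2}$ rather than your direct Sobolev bound in terms of $\|u\|_{H^{k+1}(B)}$; both lead to the same Gronwall closure. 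One caveat worth flagging is that your inequality $\|\nu(v)\cdot\partial_\phi^k u_r\|^2\le\|\partial_\phi^k u_r\|^2$ uses orthonormality of the $\nu_i(v)$, which strictly speaking holds only for $v\in N$; the paper's phrasing $(X,d\pi_N(v)X)=\|d\pi_N(v)X\|^2$ has the analogous issue, so this is a shared tacit assumption on the smooth extension of $\pi_N$ rather than a defect of your argument.
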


\begin{proof} 
For $v\in V$ we construct a solution $u=\Phi(v)\in X$ of \eqref{5.2} via Galerkin 
approximation. For this let $(\varphi_l)_{l\in\N_0}$ be Steklov eigenfunctions 
of the Laplacian, satisfying
\begin{equation*}
   \Delta\varphi_l=0 \hbox{ in } B
\end{equation*}
with boundary condition
\begin{equation*}
   \partial_r\varphi_l=\lambda_l\varphi_l \hbox{ on } \partial B,\ l\in\N_0.
\end{equation*}
Note that the Steklov eigenvalues are given by 
$\lambda_0=0$ and $\lambda_{2l-1}=\lambda_{2l}=l$, $l\in\N$. In fact,
we may choose $\varphi_0\equiv 1/\sqrt{2\pi}$ and 
\begin{equation}\label{5.3}
   \varphi_{2l-1}(re^{i\theta})=\frac{1}{\sqrt{\pi}}r^lsin(l\theta),\
   \varphi_{2l}(re^{i\theta})=\frac{1}{\sqrt{\pi}}r^lcos(l\theta),\ l\in\N.
\end{equation}
to obtain an orthonormal basis for $L^2(S^1)$ consisting of these functions.
Given $m\in\N$ then let $u^{(m)}(t,z)=\sum_{l=0}^ma^{(m)}_l(t)\varphi_l(z)$ solve the system of 
equations
\begin{equation}\label{5.4}
 \begin{split}
   \partial_ta^{(m)}_l=(\varphi_l,&u^{(m)}_t)_{L^2(S^1)}
   =-\big(\varphi_l,(\varepsilon+d\pi_N(v))u^{(m)}_r\big)_{L^2(S^1)}\\
   &=-\sum_{j=0}^ma^{(m)}_j\lambda_j
   \big(\varphi_l,(\varepsilon+d\pi_N(v))\varphi_j\big)_{L^2(S^1)},\  0\le l\le m.
 \end{split}
\end{equation}
Since for any $m\in\N$ the coefficients 
$\lambda_j(\varphi_l,(\varepsilon+d\pi_N(v))\varphi_j\big)_{L^2(S^1)}$
of this system are uniformly bounded for any $v\in V$, for any $m\in\N$ 
there exists a unique global solution 
$a^{(m)}=(a^{(m)}_l)_{0\le l\le m}$ of \eqref{5.4} with initial data 
$a^{(m)}_l(0)=a_{l0}=(u_0,\varphi_l)_{L^2(S^1)}$, $0\le l\le m$.

Note that for any $m\in\N$ and any $j\in\N_0$ the function 
\begin{equation*}
   \partial^{2j}_{\phi}(ru^{(m)}_r)\in span\{\varphi_l;\;0\le l\le m\},
\end{equation*}
and $\partial^{2j}_{\phi}u^{(m)}$ is harmonic. In particular, for $j=0$ we obtain 
\begin{equation}\label{5.5}
 \begin{split}
 \frac12\frac{d}{dt}\big(\|\nabla&u^{(m)}\|^2_{L^2(B)}\big)
 =\int_B\nabla u^{(m)}\nabla u^{(m)}_t\;dz=(u^{(m)}_r,u^{(m)}_t)_{L^2(S^1)}\\
 &=-(u^{(m)}_r,(\varepsilon+d\pi_N(v)) u^{(m)}_r)_{L^2(S^1)}\\
 &=-\varepsilon\|u^{(m)}_r\|^2_{L^2(S^1)}-\|d\pi_N(v)u^{(m)}_r\|^2_{L^2(S^1)}\\
 &\le-\frac12\|u^{(m)}_t\|^2_{L^2(S^1)}\le 0,
 \end{split}
\end{equation}
and we find the uniform $H^1$-bound 
\begin{equation}\label{5.6}
 \begin{split}
  \sup_{t\ge 0}&\|\nabla u^{(m)}(t)\|^2_{L^2(B)}
  +\varepsilon\|u^{(m)}_r\|^2_{L^2([0,\infty[\times S^1)}
  +\|u^{(m)}_t\|^2_{L^2([0,\infty[\times S^1)}\\
  &\le 2\|\nabla u^{(m)}(0)\|^2_{L^2(B)}\le 2\|\nabla u_0\|^2_{L^2(B)}\le 2R_0^2.
 \end{split}
\end{equation}

Moreover, for $j=k\in\N$ as in the definition of $X$ upon integrating by parts we find 
\begin{equation}\label{5.7}
 \begin{split}
 \frac12&\frac{d}{dt}\big(\|\nabla\partial^k_{\phi}u^{(m)}\|^2_{L^2(B)}\big)
 =(-1)^k\int_B\nabla\partial^{2k}_{\phi}u^{(m)}\nabla u^{(m)}_t\;dz\\
 &=(-1)^k(\partial^{2k}_{\phi}u^{(m)}_r,u^{(m)}_t)_{L^2(S^1)}\\
 &=(-1)^{k+1}(\partial^{2k}_{\phi}u^{(m)}_r,
 (\varepsilon+d\pi_N(v))u^{(m)}_r)_{L^2(S^1)}\\
 &=-\varepsilon\|\partial^k_{\phi}u^{(m)}_r\|^2_{L^2(S^1)}
 -\|d\pi_N(v)\partial^k_{\phi}u^{(m)}_r\|^2_{L^2(S^1)}+I,
 \end{split}
\end{equation}
where $I=\sum_{j=1}^k\Big({k\atop j}\Big)I_j$ with
\begin{equation*}
 I_j=-(\partial^k_{\phi}u^{(m)}_r,\partial^j_{\phi}(d\pi_N(v))
   \partial^{k-j}_{\phi}u^{(m)}_r)_{L^2(S^1)}
\end{equation*}
similar to the proof of Lemma \ref{lemma4.3}. However, now we simply bound 
\begin{equation*}
 |I_j|\le C\sum_{\Sigma_ij_i=j}\|\partial^k_{\phi}u^{(m)}_r\|_{L^2(S^1)}
 \|\Pi_i\partial^{j_i}_{\phi}v\partial^{k-j}_{\phi}u^{(m)}_r\|_{L^2(S^1)}, \ 1\le j\le k.
\end{equation*}

Note that by compactness of Sobolev's embedding $H^1(S^1)\hookrightarrow L^{\infty}(S^1)$
and Ehrlich's lemma for any number $1\le j\le k$, any $\delta>0$ we can bound 
\begin{equation*}
 \begin{split}
  \|\partial^{k-j}_{\phi}&u^{(m)}_r\|_{L^{\infty}(S^1)}
  \le\delta\|\partial^{k-j+1}_{\phi}u^{(m)}_r\|_{L^2(S^1)}
  +C(\delta)\|\partial^{k-j}_{\phi}u^{(m)}_r\|_{L^2(S^1)}\\
  &\le2\delta\|\partial^k_{\phi}u^{(m)}_r\|_{L^2(S^1)}
  +C(\delta)\|u^{(m)}_r\|_{L^2(S^1)}.
 \end{split}
\end{equation*}
On the other hand, for any $v\in V$ by the trace theorem we have
\begin{equation*}
  \|\partial^k_{\phi}v\|_{L^2(S^1)}\le C\|\partial^k_{\phi}v\|_{H^1(B)}
  \le C\|v\|_{H^{k+1}(B)}\le CR_0
\end{equation*}
and we therefore also can bound 
\begin{equation*}
  \|\partial^j_{\phi}v\|_{L^{\infty}(S^1)}
  \le C\|\partial^k_{\phi}v\|_{L^2(S^1)}+\|\partial^j_{\phi}v\|_{L^2(S^1)}
  \le C\|v\|_{H^{k+1}(B)}\le CR_0.
\end{equation*}
for any $1\le j<k$.

Thus, for sufficiently small $\delta>0$ with a constant $C>0$ depending on 
$\varepsilon>0$ and $R_0$ there holds
\begin{equation*}
  |I|\le\varepsilon/2 \|\partial^k_{\phi}u^{(m)}_r\|^2_{L^2(S^1)}
  +C\|u^{(m)}_r\|^2_{L^2(S^1)}
\end{equation*}
and from \eqref{5.7} with the help of \eqref{3.7} we obtain the inequality
\begin{equation*}
 \begin{split}
 \frac{d}{dt}&\big(\|\nabla\partial^k_{\phi}u^{(m)}\|^2_{L^2(B)}\big)
 \le C\|u^{(m)}_r\|^2_{L^2(S^1)}=C\|u^{(m)}_{\phi}\|^2_{L^2(S^1)}
 \le C\|u^{(m)}_{\phi}\|^2_{H^1(B)}\\
 &\le C\|\nabla\partial^k_{\phi}u^{(m)}\|^2_{L^2(B)}+C\|\nabla u^{(m)}\|^2_{L^2(B)}
 \le C(1+\|\nabla\partial^k_{\phi}u^{(m)}\|^2_{L^2(B)}),
 \end{split}
\end{equation*}
where we recall \eqref{5.6} for the last conclusion. 

It follows that for suitably small $T>0$ there holds $\|u^{(m)}\|^2_X\le 4R_0^2$
for all $m\in\N$. Thus, there is a sequence $m\to\infty$ such that 
$u^{(m)}\rightharpoondown u$ weakly-$*$ in $L^{\infty}([0,T]; H^{k+1}(B))$ 
with $u^{(m)}_t\rightharpoondown u_t$ weakly in $L^2([0,T]\times S^1)$, 
where $u=:\Phi(v)\in V$ solves equation \eqref{5.2}.
\end{proof}

\begin{lemma}\label{lemma5.3}
There is $T>0$ such that for $v_1,v_2\in V$ there holds
\begin{equation*}
    |\Phi(v_1)-\Phi(v_2)|_X\le\frac12|v_1-v_2|_X.
\end{equation*}
\end{lemma}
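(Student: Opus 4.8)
The plan is to run the standard energy estimate for the linear boundary problem \eqref{5.2}, taking advantage of the fact that the regularization makes the operator $\varepsilon+d\pi_N(\cdot)$ uniformly elliptic on $\partial B$. Write $u_i=\Phi(v_i)\in V$ for the solutions furnished by Lemma \ref{lemma5.2}, and set $w=u_1-u_2$, $\psi=v_1-v_2$. Then each $w(t)$ is harmonic, $w(0)=0$ on $\partial B$, and subtracting the two copies of \eqref{5.2} gives
\begin{equation*}
  w_t+(\varepsilon+d\pi_N(v_1))w_r=g\ \hbox{ on }\partial B\times[0,T],\qquad
  g=-\big(d\pi_N(v_1)-d\pi_N(v_2)\big)u_{2,r}.
\end{equation*}
All identities below are first obtained on the Galerkin level, as in the proof of Lemma \ref{lemma5.2} (using that $w_r$ lies in the span of the Steklov eigenfunctions $\varphi_l$), and then passed to the limit.

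The key step is to estimate the inhomogeneity $g$ so that the difference $\psi$ enters only quadratically and with a gain of a power of $T$. Since $d\pi_N$ is globally Lipschitz, $|g|\le C|\psi|\,|u_{2,r}|$ on $\partial B$. Because $u_2\in V$ and $k\ge2$, Sobolev's embedding $H^k(B)\hookrightarrow C^0(\bar B)$ gives $\|u_{2,r}(t)\|_{L^{\infty}(\partial B)}\le C\|u_2(t)\|_{H^{k+1}(B)}\le CR_0$; and since $\psi(0)=0$, writing $\psi(t)=\int_0^t\psi_s\,ds$ in $L^2(\partial B)$ and applying Cauchy-Schwarz in time gives $\|\psi(t)\|_{L^2(\partial B)}\le t^{1/2}|\psi|_X$. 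Hence $\|g(t)\|_{L^2(\partial B)}\le CR_0\,t^{1/2}|\psi|_X$ and
\begin{equation*}
  \int_0^T\|g(t)\|^2_{L^2(\partial B)}\,dt\le CR_0^2T^2|\psi|_X^2.
\end{equation*}
Putting the coefficient $u_{2,r}$ into $L^{\infty}$ and the difference $\psi$ into $L^2(\partial B)$ avoids any interpolation loss, and this is exactly what will produce a genuine contraction.

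Next I would test the difference equation with $w_r$ on $\partial B$. As in Lemma \ref{lemma2.1}, harmonicity of $w$ gives $\int_{\partial B}w_r\cdot w_t\,d\phi=\frac{d}{dt}\big(\frac12\int_B|\nabla w|^2dz\big)$; using that the extension of $\pi_N$ is fixed so that $d\pi_N$ stays symmetric positive semi-definite, whence $(\varepsilon+d\pi_N(v_1))w_r\cdot w_r\ge\varepsilon|w_r|^2$, Young's inequality yields
\begin{equation*}
  \frac{d}{dt}\big(\frac12\int_B|\nabla w|^2dz\big)\le-\frac{\varepsilon}{2}\|w_r\|^2_{L^2(\partial B)}+\frac1{2\varepsilon}\|g\|^2_{L^2(\partial B)}.
\end{equation*}
Integrating over $[0,T]$, using $\nabla w(0)=0$ and the bound on $g$, gives $\sup_{[0,T]}\int_B|\nabla w|^2dz+\varepsilon\int_0^T\|w_r\|^2_{L^2(\partial B)}\,dt\le C\varepsilon^{-1}R_0^2T^2|\psi|_X^2$. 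Since moreover $\|w_t\|_{L^2(\partial B)}\le C\|w_r\|_{L^2(\partial B)}+\|g\|_{L^2(\partial B)}$ directly from the equation (here only boundedness of $\varepsilon+d\pi_N(v_1)$ is used), integrating in time gives $\int_0^T\|w_t\|^2_{L^2(\partial B)}\,dt\le C\varepsilon^{-2}R_0^2T^2|\psi|_X^2$. Adding the two bounds,
\begin{equation*}
  |w|_X^2=\sup_{[0,T]}\int_B|\nabla w|^2dz+\int_0^T\int_{\partial B}|w_t|^2d\phi\,dt\le C\varepsilon^{-2}R_0^2T^2|\psi|_X^2
\end{equation*}
with $C=C(N)$, so $|w|_X\le\frac12|\psi|_X$ as soon as $T\le\min\{T_2,\varepsilon/(2\sqrt CR_0)\}$, $T_2$ as in Lemma \ref{lemma5.2}.

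I expect the only real subtlety to be the one just highlighted: to get a genuine contraction rather than merely a smallness-free estimate, the small factor must be extracted purely from $\|\psi(t)\|_{L^2(\partial B)}\le t^{1/2}|\psi|_X$, which forces $u_{2,r}$ into $L^{\infty}(\partial B)$ — and it is precisely here that the strong a priori bound $\|u_2(t)\|_{H^{k+1}(B)}\le2R_0$ built into the definition of $V$ is needed. The other point to keep in mind is that the favorable sign in the $w_r$-test (at the harmless cost of a factor $\varepsilon$, since $\varepsilon>0$ is fixed throughout this step) relies on $d\pi_N$ being positive semi-definite, so the smooth extension of $\pi_N$ to all of $\R^n$ should be chosen with that property.
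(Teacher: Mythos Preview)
Your proof is correct and follows essentially the same route as the paper: subtract the two copies of \eqref{5.2}, test with $w_r$ to obtain $\frac{d}{dt}\|\nabla w\|^2_{L^2(B)}+\varepsilon\|w_r\|^2_{L^2(\partial B)}\le C\|\psi\|^2_{L^2(\partial B)}$, put $u_{2,r}$ into $L^{\infty}(\partial B)$ via the $H^{k+1}$-bound from $V$, and extract the contraction factor from $\|\psi(t)\|_{L^2(\partial B)}\le t^{1/2}|\psi|_X$. Your remark that the smooth extension of $\pi_N$ should be chosen so that $d\pi_N$ remains symmetric positive semi-definite is a point the paper leaves implicit (it writes $(w_r,d\pi_N(v_1)w_r)=\|d\pi_N(v_1)w_r\|^2$ without comment), and your mention of justifying the energy identity at the Galerkin level is likewise a harmless bit of extra care.
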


\begin{proof} 
Let $T_2>0$ be as determined in Lemma \ref{lemma5.2} and fix some $0<T\le T_2$. 
For $v_1,v_2\in V$ then we have $u_i=:\Phi(v_i)\in V$, $i=1,2$. Set
$w=u_1-u_2$, $v=v_1-v_2$, and compute
\begin{equation}\label{5.8}
   w_t=-(\varepsilon+d\pi_N(v_1))w_r
   -(d\pi_N(v_1)-d\pi_N(v_2))u_{2,r}
  \hbox{ on } \partial B=S^1.
\end{equation}
Multiplying with $w_r$ and integrating we obtain 
\begin{equation*}
 \begin{split}
 \frac12&\frac{d}{dt}\big(\|\nabla w\|^2_{L^2(B)}\big)
 =\int_B\nabla w\nabla w_t\;dx=(w_r,w_t)_{L^2(S^1)}
 =-\varepsilon\|w_r\|^2_{L^2(S^1)}\\
 &-\|d\pi_N(v_1)w_r\|^2_{L^2(S^1)}
 -(w_r,(d\pi_N(v_1)-d\pi_N(v_2)u_{2,r})_{L^2(S^1)},
 \end{split}
\end{equation*}
where with $\|u_{2,r}\|_{L^{\infty}(S^1)}\le C\|u_2\|_{H^3(B)}\le CR_0$ we can bound
\begin{equation*}
 \begin{split}
 |(w_r,&(d\pi_N(v_1)-d\pi_N(v_2))u_{2,r})_{L^2(S^1)}|
 \le C\|w_r\|_{L^2(S^1)}\|v\|_{L^2(S^1)}\|u_{2,r}\|_{L^{\infty}(S^1)}\\
 &\le C\|w_r\|_{L^2(S^1)}\|v\|_{L^2(S^1)}
 \le\frac{\varepsilon}{2}\|w_r\|^2_{L^2(S^1)}+C\|v\|^2_{L^2(S^1)}.
 \end{split}
\end{equation*}
Thus, with a constant $C=C(\varepsilon)>0$ we find
\begin{equation}\label{5.9}
 \frac{d}{dt}\|\nabla w\|^2_{L^2(B)}+\varepsilon\|w_r\|^2_{L^2(S^1)}\le C\|v\|^2_{L^2(S^1)}.
\end{equation}
Similarly, from \eqref{5.8} we can bound
\begin{equation}\label{5.10}
 \|w_t\|^2_{L^2(S^1)}\le C\|w_r\|^2_{L^2(S^1)}+C\|v\|^2_{L^2(S^1)}.
\end{equation}
Integrating over $0\le t\le T$ and observing that we have
\begin{equation*}
 \sup_{0\le t\le T}\|v(t)\|^2_{L^2(S^1)}\le\big(\int_0^T\|v_t(t)\|_{L^2(S^1)}dt\big)^2
 \le T\int_0^T\|v_t(t)\|^2_{L^2(S^1)}dt,
\end{equation*}
from \eqref{5.9} we first obtain 
\begin{equation*}
 \begin{split}
  \sup_{0\le t\le T}\|\nabla w(t)\|^2_{L^2(B)}+\varepsilon\|w_r\|^2_{L^2([0,T]\times S^1)}
  \le CT\sup_{0\le t\le T}\|v(t)\|^2_{L^2(S^1)}\le CT^2|v|^2_X,
\end{split}
\end{equation*}
which we may use together with \eqref{5.10} to bound
\begin{equation*}
 \begin{split}
 |w|^2_X&=\sup_{0\le t\le T}\|\nabla w(t)\|^2_{L^2(B)}+\|w_t\|^2_{L^2([0,T]\times S^1)}
 \le CT^2|v|^2_X.
\end{split}
\end{equation*}
For sufficiently small $T>0$ then our claim follows.
\end{proof}

Thus, by Banach's fixed point theorem, 
for any $\varepsilon>0$, any smooth $u_0\in H^{1/2}(S^1;N)$ there exists $T>0$ and a 
solution $u=u(t)\in V$ of the initial value problem \eqref{5.1}, \eqref{1.4}. We now show that the 
number $T>0$ may be chosen uniformly as $\varepsilon\downarrow 0$. 
Indeed, we have the following result.

\begin{lemma}\label{lemma5.4}
There exists a constant $C>0$ such that for any $k\ge 2$, any smooth 
$u_0\in H^{1/2}(S^1;N)$, and any $0<\varepsilon\le 1/2$ for the solution 
$u$ to \eqref{5.1} with $u(0)=u_0$ there holds 
\begin{equation*}
  \frac{d}{dt}\big(\|\nabla\partial^k_{\phi}u\|^2_{L^2(B)}\big)
  \le C(1+\|\nabla u\|^2_{L^2(B)}+\|\nabla\partial^k_{\phi}u\|_{L^2(B})^{k+3}.
\end{equation*}
\end{lemma}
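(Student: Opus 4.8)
The plan is to imitate the proof of Lemma~\ref{lemma4.3} (and of the estimate~\eqref{5.7}), but now for the approximating equation~\eqref{5.1} and retaining only crude polynomial bounds with constants independent of $\varepsilon$; throughout, $C$ may depend on $k$ and $N$ but not on $\varepsilon$. Writing $d\pi_N(u)=1-d\pi^{\perp}_N(u)$ with $d\pi^{\perp}_N(u)X=\nu(u)\,\nu(u)\cdot X$ as in the proof of Lemma~\ref{lemma4.1}, and using that $\partial^k_{\phi}u$ is harmonic together with $\partial^k_{\phi}u_r=\partial_r\partial^k_{\phi}u$ on $\partial B$, I first integrate by parts in $\phi$ on the circle, insert~\eqref{5.1}, split $\varepsilon+d\pi_N(u)=(1+\varepsilon)-d\pi^{\perp}_N(u)$, and expand by Leibniz's rule to reach the identity
$$
 \frac12\frac{d}{dt}\|\nabla\partial^k_{\phi}u\|^2_{L^2(B)}
 =-(1+\varepsilon)\|\partial^k_{\phi}u_r\|^2_{L^2(S^1)}
 +\|d\pi^{\perp}_N(u)\partial^k_{\phi}u_r\|^2_{L^2(S^1)}
 +\sum_{j=1}^k\binom{k}{j}I_j ,
$$
where $I_j=(\partial^k_{\phi}u_r,\;\partial^j_{\phi}(d\pi^{\perp}_N(u))\,\partial^{k-j}_{\phi}u_r)_{L^2(S^1)}$. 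The decisive observation is that the term in which all $\phi$-derivatives fall on $u_r$ carries no bad sign: since $d\pi^{\perp}_N(u)$ is an orthogonal projection, $\|d\pi^{\perp}_N(u)\partial^k_{\phi}u_r\|^2_{L^2(S^1)}\le\|\partial^k_{\phi}u_r\|^2_{L^2(S^1)}$, so the first two terms together are $\le-\varepsilon\|\partial^k_{\phi}u_r\|^2_{L^2(S^1)}\le0$ and may be discarded. (In contrast to Lemma~\ref{lemma4.3} there is no need to rewrite this term via the signed distance function --- which is just as well, since the trace of the approximating flow~\eqref{5.1} need not lie on $N$.) This is the one genuinely delicate point of the proof: a priori $\|d\pi^{\perp}_N(u)\partial^k_{\phi}u_r\|^2_{L^2(S^1)}$ is of top order (comparable to $\|\nabla\partial^{k+1}_{\phi}u\|^2_{L^2(B)}$), positive, and not controlled by $\|\nabla\partial^k_{\phi}u\|_{L^2(B)}$; everything hinges on its being absorbed by the dissipative contribution of the $\varepsilon$- and identity-parts of $\varepsilon+d\pi_N(u)$.

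It then remains to estimate each $I_j$, $1\le j\le k$. Regarding $G_j:=\partial^j_{\phi}(d\pi^{\perp}_N(u))\,\partial^{k-j}_{\phi}u_r$ as a function on $B$ whose trace on $S^1$ is the factor occurring in $I_j$, harmonicity of $\partial^k_{\phi}u$ and the divergence theorem give
$$
 I_j=\int_{\partial B}\partial_r(\partial^k_{\phi}u)\cdot G_j\,d\phi
 =\int_B\nabla\partial^k_{\phi}u\cdot\nabla G_j\,dz ,
 \qquad\text{hence}\qquad
 |I_j|\le\|\nabla\partial^k_{\phi}u\|_{L^2(B)}\,\|\nabla G_j\|_{L^2(B)} .
$$
By the chain and Leibniz rules, and after using the pointwise bound $|\partial^m_{\phi}\nabla^{\ell}u|\le C\sum_{p\le m+\ell}|\nabla^pu|$ to turn polar into full derivatives, $\nabla G_j$ is a finite sum of terms $c(u)\,\Pi_i\nabla^{a_i}u$ with bounded coefficient $c(u)$ built from $\nu$ and its derivatives, where (since $j\ge1$) there are $2\le N\le k+2$ factors, $a_i\ge1$, $\sum_ia_i\le k+2$; in particular $\max_ia_i\le k+1$, with equality only if $N=2$ and the second factor is $\nabla u$.

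Each such product I bound in $L^2(B)$ by $C(1+\|\nabla u\|_{L^2(B)}+\|\nabla\partial^k_{\phi}u\|_{L^2(B)})^{k+2}$, distinguishing two cases. If $\max_ia_i=k+1$, the term is $\le C|\nabla^{k+1}u|\,|\nabla u|$ plus products of order $\le k$; here $\|\nabla^{k+1}u\|_{L^2(B)}\le C\|\nabla\partial^k_{\phi}u\|_{L^2(B)}$ by~\eqref{4.3}, while $\|\nabla u\|_{L^{\infty}(B)}=\|\nabla u\|_{L^{\infty}(\partial B)}\le C\|\nabla u\|_{H^1(\partial B)}$ by the maximum principle and $H^1(S^1)\hookrightarrow L^{\infty}(S^1)$, and a trace estimate together with~\eqref{4.3} bounds the last quantity by $C(1+\|\nabla u\|_{L^2(B)}+\|\nabla\partial^k_{\phi}u\|_{L^2(B)})$ (as in the proof of Proposition~\ref{prop4.4}). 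If $\max_ia_i\le k$, Hölder's inequality with each of the $N$ factors in $L^{2N}(B)$, the two-dimensional embedding $H^1(B)\hookrightarrow L^{2N}(B)$, and $\|\nabla^{a_i}u\|_{H^1(B)}\le C\|\nabla\partial^k_{\phi}u\|_{L^2(B)}+C\|\nabla u\|_{L^2(B)}$ (from~\eqref{4.3} and the monotonicity $\|\nabla\partial^m_{\phi}u\|_{L^2(B)}\le\|\nabla\partial^k_{\phi}u\|_{L^2(B)}$ for $m\le k$, visible from the Fourier expansion around~\eqref{4.2}--\eqref{4.3}) give the stated bound, with the $N\le k+2$ factors producing the exponent. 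Consequently $\|\nabla G_j\|_{L^2(B)}\le C(1+\|\nabla u\|_{L^2(B)}+\|\nabla\partial^k_{\phi}u\|_{L^2(B)})^{k+2}$, so $|I_j|\le C(1+\|\nabla u\|_{L^2(B)}+\|\nabla\partial^k_{\phi}u\|_{L^2(B)})^{k+3}$; summing over $j$, multiplying by $2$, and using $\|\nabla u\|_{L^2(B)}\le1+\|\nabla u\|^2_{L^2(B)}$ yields the assertion. Apart from the cancellation noted above, the only point requiring attention --- and an essentially routine one, already carried out in Section~\ref{Higher regularity} --- is the uniform $L^{\infty}$-control of $\nabla u$ used to treat the single top-order product $\nabla^{k+1}u\cdot\nabla u$.
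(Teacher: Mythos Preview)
Your proof is correct and follows essentially the same route as the paper's: both discard the favorably-signed $j=0$ contribution in the Leibniz expansion, convert the remaining $I_j$ ($1\le j\le k$) to bulk integrals via the divergence theorem and harmonicity of $\partial^k_{\phi}u$, and then bound the resulting products $\Pi_i\nabla^{j_i}u$ using Sobolev embeddings together with~\eqref{4.3}. The only cosmetic differences are that you expand via $d\pi^{\perp}_N$ rather than $d\pi_N$ (equivalent since $\partial^j_{\phi}d\pi_N=-\partial^j_{\phi}d\pi^{\perp}_N$ for $j\ge1$, and your inequality $\|d\pi^{\perp}_N(u)\partial^k_{\phi}u_r\|^2\le\|\partial^k_{\phi}u_r\|^2$ is exactly the paper's $-\|d\pi_N(u)\partial^k_{\phi}u_r\|^2\le0$ rewritten via Pythagoras), and that you spell out the product estimate by a case distinction where the paper compresses it into a single appeal to $H^2(B)\hookrightarrow W^{1,4}\cap C^0(\bar B)$.
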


\begin{proof}
Similar to the proof of Lemma \ref{lemma5.2}, for given $2\le k\in\N$ we compute
\begin{equation}\label{5.11}
 \begin{split}
 \frac12&\frac{d}{dt}\big(\|\nabla\partial^k_{\phi}u\|^2_{L^2(B)}\big)
 =(-1)^k\int_B\nabla\partial^{2k}_{\phi}u\nabla u_t\;dx\\
 &=(-1)^k(\partial^{2k}_{\phi}u_r,u_t)_{L^2(S^1)}
 =(-1)^{k+1}(\partial^{2k}_{\phi}u_r,
 (\varepsilon+d\pi_N(u))u_r)_{L^2(S^1)}\\
 &\le-\|d\pi_N(u)\partial^k_{\phi}u_r\|^2_{L^2(S^1)}-I,
 \end{split}
\end{equation}
where we now drop the term $\varepsilon\|\partial^k_{\phi}u_r\|^2_{L^2(S^1)}$ from \eqref{5.7}.
Again we split $I=\sum_{j=1}^k\Big({k\atop j}\Big)I_j$ with
\begin{equation*}
 \begin{split}
 I_j&=(\partial^k_{\phi}u_r,\partial^j_{\phi}(d\pi_N(u))
   \partial^{k-j}_{\phi}u_r)_{L^2(S^1)}\\
   &=(\nabla\partial^k_{\phi}u,\nabla(\partial^j_{\phi}(d\pi_N(u))
   \partial^{k-j}_{\phi}u_r))_{L^2(B)},
 \end{split}
\end{equation*}
but now we bound these terms as in the proof of Lemma \ref{lemma4.3} via
\begin{equation*}
 \begin{split}
 |I_j|&\le C\|\nabla\partial^k_{\phi}u\|_{L^2(B)}
 \big(\|\nabla\partial^j_{\phi}(d\pi_N(u))\partial^{k-j}_{\phi}u_r\|_{L^2(B)}
 +\|\partial^j_{\phi}(d\pi_N(u))\nabla\partial^{k-j}_{\phi}u_r\|_{L^2(B)}\big)\\
 &\le C\sum_{1\le j_i\le k+1,\,\Sigma_ij_i=k+2}\|\nabla\partial^k_{\phi}u\|_{L^2(B)}
 \|\Pi_i\nabla^{j_i}u\|_{L^2(B)}.
 \end{split}
\end{equation*}
Using that for any $k\ge 2$ by Sobolev's embedding 
$H^2(B)\hookrightarrow W^{1,4}\cap C^0(\bar{B})$ 
be can bound 
\begin{equation*}
 \begin{split}
  \sum_{1\le j_i\le k+1,\,\Sigma_ij_i=k+2}\|\Pi_i\nabla^{j_i}u\|_{L^2(B)}
  \le C(1+\|\nabla u\|_{L^2(B)}+\|\nabla^{k+1}u\|_{L^2(B)})^{k+2},
 \end{split}
\end{equation*}
and also using \eqref{4.3}, we obtain the claim.
\end{proof}

We now are able to conclude.
 
\begin{proposition}\label{prop5.5}
For any $k\ge 2$, any smooth $u_0\in H^{1/2}(S^1;N)$ there exists 
$T>0$ and a solution $u\in V$ to \eqref{1.3} with initial data $u(0)=u_0$.
\end{proposition}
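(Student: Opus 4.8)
The plan is to obtain the desired solution of \eqref{1.3} by passing to the limit $\varepsilon\downarrow0$ in the solutions $u_\varepsilon$ of the regularized problem \eqref{5.1}, \eqref{1.4}, whose existence on a short (a priori $\varepsilon$-dependent) time interval has just been established via Banach's fixed point theorem; the crucial point is that all relevant a priori bounds are uniform in $0<\varepsilon\le1/2$. First I would record the uniform energy bound: by \eqref{5.6} (valid for $u_\varepsilon$ since the fixed point satisfies $v=u_\varepsilon$),
\[
  \sup_{t\ge0}\|\nabla u_\varepsilon(t)\|^2_{L^2(B)}+\varepsilon\|u_{\varepsilon,r}\|^2_{L^2([0,\infty[\times S^1)}+\|u_{\varepsilon,t}\|^2_{L^2([0,\infty[\times S^1)}\le 2R_0^2,
\]
so in particular $1+\|\nabla u_\varepsilon(t)\|^2_{L^2(B)}\le1+2R_0^2$ for all $t$ and all $\varepsilon$. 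Feeding this into Lemma \ref{lemma5.4} and writing $y_\varepsilon(t)=\|\nabla\partial^k_\phi u_\varepsilon(t)\|^2_{L^2(B)}$ yields the differential inequality $y_\varepsilon'\le C\,(1+y_\varepsilon)^{(k+3)/2}$ with $C=C(k,N,R_0)$. Since $y_\varepsilon(0)=\|\nabla\partial^k_\phi u_0\|^2_{L^2(B)}\le R_0^2$ is independent of $\varepsilon$, elementary ODE comparison produces a time $T_*=T_*(k,N,R_0)>0$ and a bound $y_\varepsilon(t)\le R_0^2+C_*t$ on $[0,T_*]$, uniformly in $\varepsilon$.

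Combining this with \eqref{4.2}, \eqref{4.3} and the energy bound controls every $H^{k+1}(B)$-norm of $u_\varepsilon$ on $[0,T_*]$, so a standard continuation argument (reapplying the fixed point construction of Lemmas \ref{lemma5.2}, \ref{lemma5.3} at later times) shows that $u_\varepsilon$ is defined and smooth on all of $[0,T_*]$ for every $0<\varepsilon\le1/2$. Since moreover $t\mapsto\|u_\varepsilon(t)\|^2_{H^{k+1}(B)}$ is continuous, equals $R_0^2$ at $t=0$, and has growth rate uniformly bounded on $[0,T_*]$, there is a time $T\le T_*$ independent of $\varepsilon$ with $\|u_\varepsilon\|^2_X\le4R_0^2$; hence $u_\varepsilon\in V$ with $V$ built on $[0,T]$.

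It remains to pass to the limit. By Banach--Alaoglu there is a sequence $\varepsilon\downarrow0$ along which $u_\varepsilon\rightharpoondown u$ weakly-$*$ in $L^\infty([0,T];H^{k+1}(B))$ and $u_{\varepsilon,t}\rightharpoondown u_t$ weakly in $L^2([0,T]\times S^1)$; by weak lower semi-continuity $\|u\|^2_X\le4R_0^2$, and as each $u_\varepsilon(t)$ is harmonic with $u_\varepsilon(0)=u_0$, so is $u$, whence $u\in V$. To identify the limit equation I work on the trace: since $u_\varepsilon$ and $u_{\varepsilon,t}$ are harmonic, \eqref{4.1} trades $r$-derivatives for $\phi$-derivatives on $\partial B$, so $u_{\varepsilon,r}$ is bounded in $L^\infty([0,T];H^{k-1/2}(S^1))$ and $\partial_t u_{\varepsilon,r}$ in $L^2([0,T];H^{-1}(S^1))$. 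By the Aubin--Lions lemma, along a further subsequence $u_{\varepsilon,r}\to u_r$ strongly in $L^2([0,T]\times S^1)$, while $u_\varepsilon\to u$ in $C^0([0,T];C^1(\bar{B}))$ by compactness of $H^{k+1}(B)\hookrightarrow C^1(\bar{B})$ (here $k\ge2$ enters) together with the bound on $u_{\varepsilon,t}$. Hence $d\pi_N(u_\varepsilon)\to d\pi_N(u)$ uniformly and $d\pi_N(u_\varepsilon)u_{\varepsilon,r}\to d\pi_N(u)u_r$ in $L^2([0,T]\times S^1)$, whereas $\varepsilon\,u_{\varepsilon,r}\to0$ there because $u_{\varepsilon,r}$ is bounded; passing to the limit in \eqref{5.1} gives $u_t+d\pi_N(u)u_r=0$ on $S^1\times[0,T]$, that is \eqref{1.3}, and the $C^0([0,T];L^2(S^1))$-bound on $u_\varepsilon$ forces $u(0)=u_0$. (That the trace of $u$ remains in $N$ for $t>0$ follows from \eqref{1.3} and $u_0(S^1)\subset N$: as long as $u$ stays in the tubular neighborhood $N_\rho$, the quantity $dist_N(u)$ satisfies a linear homogeneous ODE in $t$ of the form $\partial_t\,dist_N(u)=G(u,u_r)\,dist_N(u)$ pointwise on $S^1$ and vanishes at $t=0$.)

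I expect the main obstacle to be this compactness step --- securing the strong spacetime-$L^2$ convergence of the boundary normal derivatives $u_{\varepsilon,r}$ needed to pass to the limit in the nonlinearity $d\pi_N(u_\varepsilon)u_{\varepsilon,r}$ --- which forces one to exploit harmonicity twice, once to bound $u_{\varepsilon,r}$ in a positive Sobolev norm through $u_{\varepsilon,\phi}$ and once to bound $\partial_t u_{\varepsilon,r}$ in a negative one, before an Aubin--Lions argument can be applied. The uniform-in-$\varepsilon$ existence time, although conceptually immediate from Lemma \ref{lemma5.4}, still requires the bookkeeping of the continuation argument and of the $t$-continuity of the $H^{k+1}$-norm.
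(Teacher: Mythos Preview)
Your proposal is correct and follows essentially the same route as the paper: invoke Lemma~\ref{lemma5.4} to obtain an $\varepsilon$-independent existence time and uniform bounds in $V$, extract a weak-$*$ convergent subsequence, and pass to the limit in \eqref{5.1}. The paper's proof is extremely terse (three sentences, ending with ``this suffices to pass to the limit''), whereas you have filled in the details the paper omits---in particular the Aubin--Lions argument securing strong $L^2$ convergence of $u_{\varepsilon,r}$ on $S^1$, and the observation that the trace remains on $N$ via an ODE/Gronwall argument for $dist_N(u)$. These additions are sound and useful; two minor remarks: (i) the linear bound $y_\varepsilon(t)\le R_0^2+C_*t$ is not literally what the superlinear ODE comparison yields, though a uniform bound on a uniform interval is all that is needed; (ii) the paper does not explicitly verify $u(t)\in N$ here, so your extra step is a genuine clarification rather than a detour.
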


\begin{proof}
In view of Lemma \ref{lemma5.4}, there exists a uniform number $T>0$ such that, with $V$ as 
defined above, for any $0<\varepsilon\le 1/2$ for there exists a solution 
$u_{\varepsilon}\in V$ to \eqref{5.1}. By definition of $V$, as $\varepsilon\downarrow 0$ 
suitably, we have $u_{\varepsilon}\to u$ weakly-$*$ in 
$L^{\infty}([0,T];H^{k+1}(B))\cap H^1(S^1\times[0,T])$. But this suffices to pass to the limit
$\varepsilon\downarrow 0$ in \eqref{5.1}, and $u\in V$ solves \eqref{1.3} with $u(0)=u_0$.  
\end{proof}

\begin{proof}[Proof of Theorem 1.1.i).]
By Proposition \ref{prop5.5} for any smooth $u_0\in H^{1/2}(S^1;N)$ and any $k\ge 2$ there 
exists $T>0$ and a solution $u\in V$ of \eqref{1.3}, \eqref{1.4} for $0<t<T$.
Alternatingly employing Propositions \ref{prop4.11} and \ref{prop4.6}, we then obtain
smoothness of $u$ for $0<t\le T$, including the final time $T$. (This argument later appears 
in more detail in Section \ref{Weak solutions} after Lemma \ref{lemma6.2}.)
Iterating, the solution $u$ 
may be extended smoothly until some maximal time $T_0$ where condition \eqref{4.4} 
ceases to hold. Uniqueness (even within a much larger class of competing functions) 
is established in Section \ref{Uniqueness}.
\end{proof}

\section{Weak solutions}\label{Weak solutions}
Given $u_0\in H^{1/2}(S^1;N)$, there are smooth functions
$u_{0k}\in H^{1/2}(S^1;N)$ with $u_{0k}\to u_0$ in $H^1(B)$ as $k\to\infty$.
Indeed, similar to an argument of Schoen-Uhlenbeck \cite{Schoen-Uhlenbeck-1982}, Theorem 3.1, 
with a standard mollifying sequence $(\rho_k)_{k\in\N}$ for the mollified functions
$v_{0k}:= u_0*\rho_k$ we have $dist_N(v_{0k})\to 0$ uniformly, and 
$u_{0k}:=\pi_N(v_{0k})\to u_0\in H^{1/2}(S^1;N)$ as $k\to\infty$. 

Let $u_k$ be the corresponding solutions of \eqref{1.4}
with initial data $u_k(0)=u_{0k}$, defined on a maximal time interval $[0,T_k[$, $k\in\N$.
We claim that each function $u_k$ can be smoothly extended 
to a uniform time interval $[0,T[$ for some $T>0$. To see this, we first establish 
the following non-concentration result.

\begin{lemma}\label{lemma6.1}
For any $\delta>0$ there exists a number $R>0$ and a time $T_0>0$ such that 
\begin{equation*}
   \sup_{z_0\in B,\,0<t<T_0}\int_{B_R(z_0)\cap B}|\nabla u_k(t)|^2dz<\delta
   \ \hbox{ for all }k\in\N.
\end{equation*} 
\end{lemma}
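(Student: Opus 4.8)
The plan is to exploit the localized energy inequality (Lemma~\ref{lemma2.2}) uniformly in $k$, together with the uniform initial energy bound. Since $u_{0k}\to u_0$ in $H^1(B)$, in particular $E(u_{0k})\le E(u_0)+1=:E_0$ for all large $k$, and the sequence $(\nabla u_{0k})_{k\in\N}$ is equi-integrable: for any $\delta>0$ there is $R_0=R_0(\delta)>0$ such that
\begin{equation*}
   \sup_{z_0\in B}\int_{B_{R_0}(z_0)\cap B}|\nabla u_{0k}|^2\,dz<\delta/8
   \quad\text{for all }k\in\N.
\end{equation*}
This follows because a convergent sequence in $L^2$ is equi-integrable, so the common modulus of absolute continuity of the integrals $\int|\nabla u_{0k}|^2$ gives a uniform radius; one also uses that for $u_{0k}=\pi_N(v_{0k})$ with $v_{0k}=u_0*\rho_k$ the harmonic extensions converge in $H^1(B)$, which is exactly what is recorded just above the statement.

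Next I would invoke Lemma~\ref{lemma2.2} with this fixed $R=R_0$. Given $z_0\in B$ and $0<t<T_0$, applying the lemma with $t_0=0$, $t_1=t$, and $\varepsilon$ chosen so that $t\le\varepsilon R_0$, i.e. $\varepsilon=T_0/R_0$ suffices for all $t<T_0$, yields
\begin{equation*}
   \int_B|\nabla u_k(t)|^2\varphi_{z_0,R_0}^2\,dz
   \le 4\int_B|\nabla u_{0k}|^2\varphi_{z_0,R_0}^2\,dz+C\varepsilon E(u_{0k})
   \le \frac{\delta}{2}+C\,\frac{T_0}{R_0}\,E_0 .
\end{equation*}
Since $\varphi_{z_0,R_0}\equiv 1$ on $B_{R_0/2}(z_0)$, the left side bounds $\int_{B_{R_0/2}(z_0)\cap B}|\nabla u_k(t)|^2\,dz$. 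Now choose $T_0>0$ small enough (depending only on $\delta$, $R_0$, $E_0$, hence only on $\delta$ and $E(u_0)$, but not on $k$) that $C\,T_0 E_0/R_0<\delta/2$. Then $\int_{B_{R_0/2}(z_0)\cap B}|\nabla u_k(t)|^2\,dz<\delta$ uniformly in $z_0\in B$, $0<t<T_0$, and $k\in\N$. Replacing $R_0/2$ by the name $R$ gives the claimed statement. One small point to check is that $t<T_0\le T_k$, i.e. that the smooth solution $u_k$ actually exists on $[0,T_0[$; but this is automatic, since the estimate we just derived shows the non-concentration condition \eqref{4.4} cannot fail before time $T_0$, and by the extension criterion at the end of the proof of Theorem~1.1.i) the maximal existence time $T_k$ is at least where \eqref{4.4} first fails, hence $T_k\ge T_0$.

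The main obstacle is obtaining the \emph{uniform} (in $k$) equi-integrability of the initial gradients and, coupled with it, making the constant $\varepsilon$ in Lemma~\ref{lemma2.2} work over the whole interval $(0,T_0)$ at once rather than only over short subintervals of length $\varepsilon R$. The first is handled by the $H^1(B)$-convergence $u_{0k}\to u_0$ (a convergent sequence in a Lebesgue space is uniformly integrable). The second is handled simply by noting that the hypothesis of Lemma~\ref{lemma2.2} requires $t_1-t_0\le\varepsilon R$ with $\varepsilon$ \emph{arbitrary}: taking $t_0=0$, $t_1=t<T_0$ and $\varepsilon=T_0/R$ is legitimate, and the resulting error term $C\varepsilon E(u_0)=CT_0E(u_0)/R$ is made small by shrinking $T_0$ after $R$ has been fixed. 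Everything else is a routine bookkeeping of constants, all depending only on $N$, $E(u_0)$, and $\delta$.
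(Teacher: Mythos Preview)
Your proof is correct and follows essentially the same route as the paper's: use the $H^1(B)$-convergence $u_{0k}\to u_0$ to get uniform smallness of the initial local energies, then apply the localized energy inequality (Lemma~\ref{lemma2.2}) with $t_0=0$ and $\varepsilon=T_0/R$ to propagate the bound forward for a short time. The paper picks $T_0=\delta R$ (i.e.\ $\varepsilon=\delta$) and then rescales $\delta$ at the end, whereas you fix $\delta$ first and shrink $T_0$; these are the same argument in slightly different bookkeeping, and your additional remark that $T_k\ge T_0$ via the blow-up criterion is a point the paper leaves implicit.
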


\begin{proof}
Given $\delta>0$, by absolute continuity of the Lebesgue integral and $H^1$-convergence
$u_{0k}\to u_0$ ($k\to\infty$) we can find $R>0$ such that 
\begin{equation*}
   \sup_{z_0\in B}\int_{B_{2R}(z_0)\cap B}|\nabla u_{0k}|^2dz<\delta
   \ \hbox{ for all }k\in\N.
\end{equation*} 
Choosing $T_0=\delta R$, by Lemma \ref{lemma2.2} then we have 
\begin{equation*}
   \sup_{z_0\in B,\,0<t<T_0}\int_{B_R(z_0)\cap B}|\nabla u_k(t)|^2dz
   <4\delta+C\delta E(u_{k0})<L\delta
\end{equation*} 
with a uniform constant $L>0$ for all $k\in\N$. The claim follows, if we replace 
$\delta$ with $\delta/L$.
\end{proof}

In view of Proposition \ref{prop3.3}, from Lemma \ref{lemma6.1} and Lemma \ref{lemma2.1}
we obtain the following bound for $u_k$ in $H^1(S^1)$.

\begin{lemma}\label{lemma6.2}
There exist a time $T_0>0$ and constants $C>0$, $C_0=C_0(E(u_0))>0$ such that 
\begin{equation*}
   \int_0^{T_0}\int_{S^1}|\partial_{\phi}u_k(t)|^2d\phi\,dt\le CE(u_{k0})\le C_0
   \ \hbox{ for all }k\in\N.
\end{equation*} 
\end{lemma}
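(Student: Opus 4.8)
The plan is to apply the local a priori estimate of Proposition~\ref{prop3.3} to the frozen-time maps $u_k(t)$ and then integrate in time against the energy inequality of Lemma~\ref{lemma2.1}. The key observation is that for each fixed $t$ the harmonic map $u_k(t)$ solves the elliptic boundary equation \eqref{3.1}: indeed, \eqref{1.3} reads $d\pi_N(u_k)\partial_r u_k+u_{k,t}=0$ on $S^1$, so $u_k(t)$ solves \eqref{3.1} with $f=-u_{k,t}(t)\in L^2(S^1)$.

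First I would invoke Lemma~\ref{lemma6.1}, with $\delta>0$ chosen small enough (and below the concentration threshold of Theorem~\ref{thm1.1}.ii) that Propositions~\ref{prop3.3} and~\ref{prop3.4} apply, to produce $R\in(0,1/2]$ and $T_0>0$ with
\[
   \sup_{z_0\in B,\,0<t<T_0}\int_{B_R(z_0)\cap B}|\nabla u_k(t)|^2dz<\delta\quad\text{for all }k,
\]
shrinking the $R$ furnished by Lemma~\ref{lemma6.1} if necessary so that $R\le 1/2$; the threshold condition guarantees that each $u_k$ is in fact smooth on all of $[0,T_0]$, so the a priori estimates are legitimate. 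Covering $\partial B$ by finitely many balls $B_{R^2}(z_i)$ with $z_i\in\partial B$ and bounded overlap of the dilates $B_R(z_i)$, and summing the bound of Proposition~\ref{prop3.3} over this cover (which is precisely Proposition~\ref{prop3.4}), gives for every $t\in(0,T_0)$
\[
   \int_{S^1}|\partial_\phi u_k(t)|^2d\phi\le C(R)\|u_{k,t}(t)\|^2_{L^2(S^1)}+C(R)E(u_k(t)).
\]
Integrating over $t\in[0,T_0]$ and using Lemma~\ref{lemma2.1}, which yields both $\int_0^{T_0}\int_{S^1}|u_{k,t}|^2d\phi\,dt\le E(u_{k0})$ and $E(u_k(t))\le E(u_{k0})$ for all $t\le T_0$, we obtain
\[
   \int_0^{T_0}\int_{S^1}|\partial_\phi u_k(t)|^2d\phi\,dt\le C(R)(1+T_0)E(u_{k0})=:C\,E(u_{k0}).
\]
Finally, since $u_{0k}\to u_0$ in $H^1(B)$ we have $E(u_{k0})\to E(u_0)$, hence $\sup_k E(u_{k0})\le C_0=C_0(E(u_0))<\infty$, and the stated uniform bound follows.

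All three ingredients — uniform non-concentration (Lemma~\ref{lemma6.1}), the regularity estimate (Propositions~\ref{prop3.3}/\ref{prop3.4}), and the energy inequality (Lemma~\ref{lemma2.1}) — are already available, so there is no serious obstacle here. The only point requiring care is that the radius $R$, and hence the constant $C(R)$, must be chosen independently of $k$; this is exactly what the uniform statement of Lemma~\ref{lemma6.1} provides, and $R$ may additionally be taken $\le 1/2$ so that Proposition~\ref{prop3.3} is applicable.
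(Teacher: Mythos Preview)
Your proof is correct and follows exactly the approach the paper indicates in the sentence preceding the lemma: combine the uniform non-concentration of Lemma~\ref{lemma6.1}, the local regularity estimate Proposition~\ref{prop3.3} (globalized as in Proposition~\ref{prop3.4}) applied to $u_k(t)$ with $f=-u_{k,t}(t)$, and the energy inequality Lemma~\ref{lemma2.1}. Your remark that non-concentration forces $T_k\ge T_0$ via the blow-up criterion is a useful clarification that the paper leaves implicit.
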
 

From Lemma \ref{lemma6.2} we obtain locally in time uniform smooth bounds for $(u_k)$ for $t>0$
by iteratively applying our previous regularity results. More precisely, Fatou's lemma 
and Lemma \ref{lemma6.2} first yield the bound 
\begin{equation*}
   \int_0^{T_0}\liminf_{k\to\infty}\big(\int_{S^1}|\partial_{\phi}u_k(t)|^2d\phi\big)dt
   \le C_0.
\end{equation*} 
Thus for almost every $0<t_0<T_0$ there holds 
\begin{equation*}
    \liminf_{k\to\infty}\int_{S^1}|\partial_{\phi}u_k(t_0)|^2d\phi<\infty.
\end{equation*} 
For any such $0<t_0<T_0$, if $\delta>0$ is sufficiently small, from Proposition \ref{prop4.2} 
with another appeal to Fatou's lemma we may conclude
\begin{equation*}
    \int_{t_0}^{T_0}\liminf_{k\to\infty}\int_B|\nabla\partial_{\phi}u_k|^2dz\,dt
    \le\liminf_{k\to\infty}\int_{t_0}^{T_0}\int_B|\nabla\partial_{\phi}u_k|^2dz\,dt\le C_1
\end{equation*}
for some $C_1>0$, so that now we even have
\begin{equation*}
    \liminf_{k\to\infty}\int_B|\nabla\partial_{\phi}u_k(t_1)|^2dz<\infty.
\end{equation*} 
for almost every $t_0<t_1<T_0$. Hence we may next invoke Proposition \ref{prop4.4} 
and \eqref{4.2} to obtain the bound
\begin{equation*}
    \liminf_{k\to\infty}
    \int_{t_1}^{T_0}\int_{\partial B}|\nabla\partial_{\phi}u_k|^2dz\,dt<\infty
\end{equation*}
for any such $t_0<t_1<T_0$, and Fatou's lemma gives that 
\begin{equation*}
    \liminf_{k\to\infty}\int_{\partial B}|\nabla\partial_{\phi}u_k(t_2)|^2d\phi<\infty
\end{equation*}
for almost every $t_1<t_2<T_0$. Now Proposition \ref{prop4.10} may be applied with $\varphi_0=1$, 
and we obtain 
\begin{equation*}
    \liminf_{k\to\infty}
    \int_{t_2}^{T_0}\int_B|\nabla\partial^2_{\phi}u_k|^2dz\,dt<\infty
\end{equation*}
for any such $t_1<t_2<T_0$. Another application of Fatou's lemma gives 
\begin{equation*}
    \liminf_{k\to\infty}\int_B|\nabla\partial^2_{\phi}u_k(t_3)|^2dz<\infty
\end{equation*}
for almost every $t_2<t_3<T_0$, and Proposition \ref{prop4.5} yields  
\begin{equation*}
    \liminf_{k\to\infty}
    \int_{t_3}^{T_0}\int_{\partial B}|\nabla\partial^2_{\phi}u_k|^2d\phi\,dt<\infty
\end{equation*}
for any such $t_2<t_3<T_0$. We may then iterate, using \eqref{3.7} and alternatingly employing 
Propositions \ref{prop4.11} and \ref{prop4.6} for $3\le k\in\N$, to find a subsequence 
$(u_k)$ satisfying uniform smooth bounds on $]t_0,T_0]$ for any $t_0>0$. Passing to the limit
$k\to\infty$ for this subsequence we obtain a weak solution to \eqref{1.3}, \eqref{1.4}
of energy-class in the following sense.

\begin{definition}\label{def6.3}
A function $u\in H^1([0,T_0]\times S^1;N)\cap L^{\infty}([0,T_0]; H^{1/2}(S^1;N))$ is a
weak solution of \eqref{1.3}, \eqref{1.4} of energy-class, if \eqref{1.3} is satisfied 
in the weak sense, that is, if there holds 
\begin{equation}\label{6.1}
  \begin{split}
  \int_0^{T_0}\int_{\partial B}&(u_t+d\pi_N(u)u_r)\cdot\varphi d\phi\,dt\\
  &=\int_0^{T_0}\int_{\partial B}u_t\cdot\varphi d\phi\,dt
  +\int_0^{T_0}\int_B\nabla u\cdot\nabla\big(d\pi_N(u)\varphi\big)dz\,dt=0
 \end{split}
\end{equation}
for all $\varphi\in C^{\infty}_c(S^1\times ]0,T_0[)$, and if there holds the energy 
inequality 
\begin{equation}\label{6.2}
  E(u(T))+\int_0^T\int_{\partial B}|u_t|^2d\phi\;dt\le E(u_0)
\end{equation}
for any $0<T<T_0$, with the initial data 
$u_0\in H^{1/2}(S^1;N)$ being attained in the sense of traces.
\end{definition}

We then may summarize our results, as follows. 

\begin{proposition}\label{prop6.4}
For any $u_0\in H^{1/2}(S^1;N)$ there exists $T_0>0$ and a weak solution $u$ to \eqref{1.3},
\eqref{1.4} on $[0,T_0]$ of energy-class, which is smooth for $t>0$.
\end{proposition}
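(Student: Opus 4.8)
The plan is to realize the approximation scheme sketched between Lemma~\ref{lemma6.2} and Definition~\ref{def6.3} and then pass to the limit. First I would choose smooth maps $u_{0k}\in H^{1/2}(S^1;N)$ with $u_{0k}\to u_0$ in $H^1(B)$, as constructed above via mollification followed by nearest-point projection. By Proposition~\ref{prop5.5} together with the bootstrap proving Theorem~\ref{thm1.1}.i), each $u_{0k}$ launches a unique smooth solution $u_k$ of \eqref{1.3}, \eqref{1.4} on a maximal interval $[0,T_k[$. Lemma~\ref{lemma6.1} produces a single radius $R>0$ and a single time $T_0>0$ so that the non-concentration condition \eqref{4.4} (with the $\delta$ fixed by our regularity estimates) holds uniformly in $k$ on $[0,T_0[$; by Theorem~\ref{thm1.1}.ii) this rules out blow-up, so $T_k>T_0$ and all the $u_k$ are smooth on $[0,T_0]$. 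Then Lemma~\ref{lemma6.2} gives the uniform space-time bound $\int_0^{T_0}\!\int_{S^1}|\partial_\phi u_k|^2\,d\phi\,dt\le C_0$.

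The core of the argument is the interlaced bootstrap already displayed after Lemma~\ref{lemma6.2}. Because the right-hand sides of Propositions~\ref{prop4.2}, \ref{prop4.4}, \ref{prop4.10}, \ref{prop4.5}, and then alternately \ref{prop4.11} and \ref{prop4.6} each involve only the \emph{preceding} bound together with the initial value of a single derivative norm, repeated use of Fatou's lemma lets me select times $0<t_0<t_1<t_2<\cdots<T_0$ at which, along a subsequence, the relevant derivative norms of $u_k$ are finite uniformly in $k$, and restart each estimate from that time slice. This is exactly what prevents any constant from inheriting a dependence on $\|u_{0k}\|_{H^m(B)}$, which would blow up as $k\to\infty$. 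Diagonalizing, I obtain a subsequence of $(u_k)$ bounded in $C^\infty([t_*,T_0]\times\bar B)$ for every $t_*\in(0,T_0)$.

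Next I would extract a limit. Banach--Alaoglu together with the $C^\infty$-bounds (Arzel\`a--Ascoli on compact subsets of $\,]0,T_0]\times\bar B$) yields $u_k\to u$ with $u$ harmonic in $z$ for each $t$, smooth on $\,]0,T_0]\times\bar B$, solving \eqref{1.3} classically for $t>0$ and hence satisfying \eqref{6.1}; the pointwise constraint $u(t)\in N$ passes to the limit since $\mathrm{dist}_N(u_k(t))\to 0$. For the energy inequality \eqref{6.2}, start from the identity in Lemma~\ref{lemma2.1} for each $u_k$, namely $E(u_k(T))+\int_0^T\!\int_{\partial B}|u_{k,t}|^2\,d\phi\,dt=E(u_{0k})$, and let $k\to\infty$ using weak lower semicontinuity of the Dirichlet energy and of $\|u_{k,t}\|_{L^2}$ and $E(u_{0k})\to E(u_0)$. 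The uniform energy bound and $u_k(0)=u_{0k}\to u_0$ give $u\in H^1([0,T_0]\times S^1;N)\cap L^\infty([0,T_0];H^{1/2}(S^1;N))$ with initial data attained in the trace sense, so $u$ is a weak solution of energy-class in the sense of Definition~\ref{def6.3}, smooth for $t>0$.

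The main obstacle is precisely this interlacing step: one has to thread Fatou's lemma through the chain of \emph{a priori} estimates in just the right order, so that the resulting bounds are uniform in $k$ down to (but not including) $t=0$ while no constant depends on high Sobolev norms of the approximating data $u_{0k}$. Once that is in place, passage to the limit and verification of \eqref{6.1}, \eqref{6.2}, and the initial condition are routine.
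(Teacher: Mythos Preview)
Your proposal is correct and follows the paper's approach essentially verbatim: the interlaced bootstrap via Fatou's lemma to produce uniform smooth bounds on $[t_*,T_0]$ is exactly what the paper carries out between Lemma~\ref{lemma6.2} and Definition~\ref{def6.3}, and the passage to the limit (smooth local convergence giving \eqref{6.1}, lower semicontinuity giving \eqref{6.2}) matches the paper's proof. The only minor difference is that the paper verifies attainment of the initial data explicitly via the estimate $\|u(t)-u_0\|_{L^2(\partial B)}^2\le tE(u_0)+o(1)$ rather than appealing to the trace in $H^1([0,T_0]\times S^1)$, but either route works.
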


\begin{proof}
For any open $U\subset S^1\times ]0,T_0[$ we have uniform smooth bounds for $u_k$ on $U$;
thus a suitable sub-sequence $u_k\to u$ smoothly locally as $k\to\infty$. 
The equation \eqref{6.1} follows from the corresponding identites for $u_k$.

Moreover, \eqref{6.2} follows from the energy identity, Lemma \ref{lemma2.1}, for $u_k$ in view of
$H^1$-convergence $u_{0k}\to u_0$ as well as weak lower semi-continuity of the energy and of 
the $L^2$-norm.

Finally, with error $o(1)\to 0$ as $k\to\infty$ for $0<t<T_0$ we can estimate 
\begin{equation*}
  \begin{split}
  \|&u(t)-u_0\|^2_{L^2(\partial B)}\le\|u_k(t)-u_{0k}\|^2_{L^2(\partial B)}+o(1)\\
  &\le\Big(\int_0^t\|\partial_tu_k(t')\|_{L^2(\partial B)}dt'\Big)^2+o(1)
  \le t\int_0^t\|\partial_tu_k(t')\|^2_{L^2(\partial B)}dt'+o(1)\\
  &\le tE(u_0)+o(1)\to 0\ \hbox{ as } t\downarrow 0,
 \end{split}
\end{equation*}
and $u(t)\to u_0$ weakly in $H^{1/2}(S^1;N)\cap H^1(B;\R^n)$ as $t\downarrow 0$. In fact,
by \eqref{6.2} we then even have strong convergence. 
\end{proof}

\section{Uniqueness}\label{Uniqueness}
With the help of the tools developed in Section \ref{A regularity estimate} 
we can show uniqueness of partially regular weak energy-class solutions as in 
Proposition \ref{prop6.4}. 

\begin{theorem}\label{thm7.1} 
Let $u_0\in H^{1/2}(S^1;N)$. Suppose $u$ and $v$ both are 
weak energy-class solutions of \eqref{1.3}, \eqref{1.4} on $[0,T_0]$ for some $T_0>0$ with 
initial data $u_0$, and suppose that $u$ and $v$ are smooth for $t>0$. Then $u=v$.
\end{theorem}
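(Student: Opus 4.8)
The plan is to run a Gronwall argument on the difference $w:=u-v$. For each fixed $t$ both $u(t)$ and $v(t)$ are harmonic in $B$ with boundary trace in $N$, and both attain $u_0$ as $t\downarrow 0$; moreover the energy inequality \eqref{6.2}, together with weak lower semicontinuity of the Dirichlet energy, forces the strong convergences $u(t),v(t)\to u_0$ in $H^{1/2}(S^1)$, equivalently $\nabla u(t),\nabla v(t)\to\nabla u_0$ in $L^2(B)$, as $t\downarrow 0$ (cf.\ the end of the proof of Proposition \ref{prop6.4}). Hence
\[
  \Phi(t):=\|w(t)\|^2_{L^2(\partial B)}+\|\nabla w(t)\|^2_{L^2(B)},
\]
comparable to the squared $H^{1/2}(\partial B)$-norm of $w(t)$, satisfies $\Phi(t)\to 0$ as $t\downarrow 0$. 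Subtracting the two flow equations gives, on $\partial B$,
\[
  w_t+d\pi_N(u)\partial_rw=-\big(d\pi_N(u)-d\pi_N(v)\big)\partial_rv=:F,\qquad |F|\le C|w|\,|\partial_rv|.
\]
Testing with $\partial_rw$ and with $w$, using that $w$ and $w_t$ are harmonic (so that $\int_B\nabla w\cdot\nabla w_t\,dz=\int_{\partial B}\partial_rw\cdot w_t\,d\phi$, $\tfrac12\int_{\partial B}\partial_r(|w|^2)\,d\phi=\int_B|\nabla w|^2\,dz$, and $\|\partial_rw\|_{L^2(\partial B)}=\|\partial_\phi w\|_{L^2(\partial B)}\ge c\,\|\nabla w\|_{L^2(B)}$ by Fourier expansion), and the orthogonal splitting $|\partial_rw|^2=|d\pi_N(u)\partial_rw|^2+|d\pi_N^\perp(u)\partial_rw|^2$, one arrives at a differential inequality of the shape
\[
  \tfrac{d}{dt}\Phi(t)+\|\partial_rw\|^2_{L^2(\partial B)}
  \le C\|d\pi_N^\perp(u)\partial_rw\|^2_{L^2(\partial B)}
  +C\!\int_{\partial B}\!\big(|\partial_rw|+|w|\big)\big(|\partial_rw|+|F|\big)\,d\phi.
\]

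The heart of the matter is to bound the right-hand side by $\tfrac12\|\partial_rw\|^2_{L^2(\partial B)}+\Lambda(t)\Phi(t)$ with $\Lambda\in L^1(]0,T_1[)$ on a short initial interval $[0,T_1]$, and then to conclude by Gronwall using $\Phi(0^+)=0$. On such an interval the data, and hence by Lemma \ref{lemma2.2} extended to $t=0$ via the strong convergence above, have uniformly small local energy $\sup_{z_0,\,0<t<T_1}\int_{B_R(z_0)\cap B}(|\nabla u|^2+|\nabla v|^2)<\delta$; consequently Proposition \ref{prop3.4}, applied to $u$ and $v$ with $f=-u_t$, resp.\ $f=-v_t$, together with \eqref{6.2}, gives $\|\partial_\phi u(t)\|^2_{L^2(\partial B)},\,\|\partial_\phi v(t)\|^2_{L^2(\partial B)}\in L^1(]0,T_1[)$. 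For the normal term I would mimic the proof of Proposition \ref{prop3.1} for the difference: writing $g^i:=dist^i_N(u)-dist^i_N(v)\in H^1_0(B)$ one checks on $\partial B$, using $\partial_rv=-v_t+\nu(v)\partial_r(dist_N(v))$ and $\nu_i(u)\cdot\nu_j(v)=\delta_{ij}+O(|w|)$ on $N$, that $\nu_i(u)\cdot\partial_rw=\partial_rg^i+O(|w|)\big(|v_t|+|\partial_r(dist_N(v))|\big)$; the divergence theorem together with the Poisson equation \eqref{3.5} for $g^i$ — whose right-hand side $\nabla u\cdot d\nu_i(u)\nabla u-\nabla v\cdot d\nu_i(v)\nabla v$ is $O\big(|\nabla w|(|\nabla u|+|\nabla v|)\big)+O(|w||\nabla v|^2)$ — then gives $\|\partial_rg^i\|^2_{L^2(\partial B)}\le C\|\nabla g^i\|_{L^2(B)}\|\Delta g^i\|_{L^2(B)}$, and the cross term $F$ is handled the same way after the same substitution for $\partial_rv$. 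All products of norms that appear are controlled by combining: the uniform bound $\|\nabla u(t)\|_{L^2(B)},\|\nabla v(t)\|_{L^2(B)}\le\sqrt{2E(u_0)}$; the estimate $\|\nabla w\|_{L^4(B)}\le C\|\nabla w\|_{L^2(\partial B)}$ for harmonic functions (from $\|\nabla w\|_{H^{1/2}(B)}\le C\|\nabla w\|_{L^2(\partial B)}$ and $H^{1/2}(B)\hookrightarrow L^4(B)$), likewise for $u,v$; the one-dimensional interpolation $\|w\|^2_{L^\infty(\partial B)}\le C\|w\|_{L^2(\partial B)}\big(\|w\|_{L^2(\partial B)}+\|\partial_\phi w\|_{L^2(\partial B)}\big)$, so that every factor of $w$ carries a power of $\Phi^{1/2}$; the elliptic bound $\|\partial_r(dist_N(v))\|_{L^2(\partial B)}\le C\|\nabla v\|^{1/2}_{L^2(B)}\|\nabla v\|_{L^4(B)}\le C\|\nabla v\|_{L^2(\partial B)}$; and crucially the smallness of the local energy, which turns the genuinely top-order contributions into small constant multiples of $\|\partial_rw\|^2_{L^2(\partial B)}$ that are absorbed on the left. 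What is left of $\Lambda(t)$ is then a combination of $\|u_t\|^2_{L^2(\partial B)}$, $\|v_t\|^2_{L^2(\partial B)}$, $\|\partial_\phi u\|^2_{L^2(\partial B)}$, $\|\partial_\phi v\|^2_{L^2(\partial B)}$, all integrable on $]0,T_1[$ by the above. Thus $\Phi\equiv 0$, i.e.\ $u\equiv v$, on $[0,T_1]$.

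To upgrade this to all of $[0,T_0]$, consider $I:=\{t\in[0,T_0]:u\equiv v\text{ on }[0,t]\}$. It is non-empty by the previous step, closed since $u,v\in C^0([0,T_0];L^2(\partial B))$, and open in $[0,T_0]$: at an interior point $t_*\in I$ with $t_*>0$ one has $w(t_*)=0$, and since $u$ and $v$ are smooth near $t_*$ the same differential inequality holds there with $\Lambda\in L^\infty_{\mathrm{loc}}$, whence Gronwall starting from $t_*$ forces $w\equiv 0$ just beyond $t_*$. Therefore $I=[0,T_0]$ and $u=v$.

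The step I expect to be the main obstacle is the borderline estimation in the third paragraph. Because $u_0$ is only of class $H^{1/2}$, neither a crude $L^\infty$-bound on $w$ nor an $L^2$-in-time bound on $\|\partial_\phi u(t)\|^2_{L^2(\partial B)}$ is available, so one must arrange every nonlinear term of the difference to retain a full factor of $\Phi$ times an $L^1_t$ coefficient, absorbing the highest-order pieces purely through the smallness of the local energy; making the $dist_N(u)-dist_N(v)$ decomposition and the interpolation inequalities cooperate to this end — rather than producing an uncontrollable factor such as $\|\nabla u\|^4_{L^2(\partial B)}$ — is the delicate point, and the passage $t\downarrow 0$, handled via the energy inequality \eqref{6.2}, is the other non-routine ingredient.
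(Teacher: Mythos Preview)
Your overall strategy is right and matches the paper's: a Gronwall argument on the difference $w=u-v$, exploiting (i) strong convergence $u(t),v(t)\to u_0$ in $H^1(B)$ from the energy inequality, (ii) the $dist_N$ structure to replace normal components by functions in $H^1_0(B)$ whose Laplacian is quadratic in the gradient, (iii) the Sobolev embedding $H^{1/2}(B)\hookrightarrow L^4(B)$ for harmonic functions, and (iv) Proposition~\ref{prop3.4} to get $\int_0^{T_1}\|\partial_\phi u\|^2_{L^2(\partial B)}\,dt<\infty$ as the integrable coefficient. All of this is exactly what the paper uses.

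The gap is in your organization, and it is precisely the one you flag in your last paragraph. By including $\|\nabla w(t)\|^2_{L^2(B)}$ in $\Phi$ you are forced to test with $\partial_r w$, and this produces the cross-term $\int_{\partial B}F\cdot\partial_r w\,d\phi$ with $|F|\le C|w|\,|\partial_r v|$. Estimating this via $\|w\|_{L^\infty(\partial B)}\le C\|w\|_{L^2}^{1/2}\|w\|_{H^1(\partial B)}^{1/2}$ and Young's inequality leaves a term of size $C\,\Phi\,\|\partial_r v\|^4_{L^2(\partial B)}$ after absorbing $\|\partial_r w\|^2_{L^2(\partial B)}$; but only $\|\partial_r v\|^2_{L^2(\partial B)}\in L^1_t$ is available, not the fourth power. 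Your substitution $\partial_r v=-v_t+\nu(v)\partial_r(dist_N(v))$ does not help here, since $\int_{\partial B}(\cdots)\cdot\partial_r w$ cannot be pushed to the interior by the divergence theorem in any obvious way (the integrand is not of the form $\partial_r(\cdot)$ times a nice function). This is exactly the ``uncontrollable factor such as $\|\nabla u\|^4_{L^2(\partial B)}$'' you worried about, and your sketch does not show how to avoid it.

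The paper sidesteps this entirely by a better choice of Gronwall quantity and of the form of the difference equation. Using the full decomposition \eqref{3.2} for both $u$ and $v$ one writes
\[
  w_t+\partial_r w=\nu(u)\partial_r(dist_N(u))-\nu(v)\partial_r(dist_N(v))\quad\text{on }\partial B,
\]
and tests \emph{only with $w$}. Since $w$ is harmonic this gives directly
\[
  \sup_{0<t<T}\|w(t)\|^2_{L^2(\partial B)}+\int_0^T\!\!\int_B|\nabla w|^2\,dz\,dt
  \le C\int_0^T\!\!\int_{\partial B}(w_t+\partial_r w)\cdot w\,d\phi\,dt,
\]
so $\|\nabla w\|^2_{L^2(B)}$ appears only in time-integrated form, never pointwise. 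Every term on the right is now of the type $\int_{\partial B}w\cdot(\ldots)\partial_r(dist_N(\cdot))\,d\phi$, which is converted to an interior integral by the divergence theorem and \eqref{3.5}; after Young one is left with $\varepsilon\|\nabla w\|^2_{L^2(B)}+C(\varepsilon)\|w\|^2_{L^4(B)}\big(\|\nabla u\|^2_{L^4(B)}+\|\nabla v\|^2_{L^4(B)}\big)$, and the bounds $\|w\|^2_{L^4(B)}\le C\|w\|^2_{L^2(\partial B)}$, $\|\nabla u\|^2_{L^4(B)}\le C\|\nabla u\|^2_{L^2(\partial B)}$ (harmonic extension plus $H^{1/2}(B)\hookrightarrow L^4(B)$) close the argument with only $\|\partial_\phi u\|^2_{L^2(\partial B)}\in L^1_t$, as provided by Proposition~\ref{prop3.4}. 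In short: drop the $\partial_r w$ test, take the Gronwall quantity to be $\sup_t\|w\|^2_{L^2(\partial B)}+\int_0^T\|\nabla w\|^2_{L^2(B)}$, and write the equation so the right-hand side contains only $\partial_r(dist_N(\cdot))$; then no fourth power ever appears.
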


\begin{proof}
Using the identity \eqref{3.2} for $u$ and $v$, respectively, for the function $w=u-v$ 
for almost every $0<t<T_0$ we have 
\begin{equation}\label{7.1}
  \begin{split}
  &\partial_tw+\partial_rw=\nu(u)\partial_r(dist_N(u))-\nu(v)\partial_r(dist_N(v))\\
  &\quad=(\nu(u)-\nu(v))\partial_r(dist_N(u))+\nu(v)\partial_r(dist_N(u)-dist_N(v))
 \end{split}
\end{equation}
on $\partial B=S^1$.
From equation \eqref{3.5}, moreover, we obtain
\begin{equation}\label{7.2}
 \begin{split} 
  |\Delta(dist_N(u)&-dist_N(v))|=|\nabla u\cdot d\nu(u)\nabla u-\nabla v\cdot d\nu(v)\nabla v|\\
  &\le C(|w||\nabla u|^2+(|\nabla u|+|\nabla v|)|\nabla w|)\hbox{ in }B.
 \end{split}
\end{equation}
Observing that 
\begin{equation*}
  |dist_N(u)-dist_N(v)|\le C|w|,
\end{equation*}
upon multiplying \eqref{7.2} with the function $(dist_N(u)-dist_N(v))\in H_0^1(B)$,
integrating by parts, and using Young's inequality, for any $\varepsilon>0$ we obtain 
\begin{equation}\label{7.3}
  \begin{split}
 \|\nabla&(dist_N(u)-dist_N(v))\|^2_{L^2(B)}\\
 &\le C\int_B(|w|^2|\nabla u|^2+(|\nabla u|+|\nabla v|)|\nabla w||w|)dz\\
 &\le\varepsilon\|\nabla w\|^2_{L^2(B)}
 +C(\varepsilon)\|w\|^2_{L^4(B)}(\|\nabla u\|^2_{L^4(B)}+\|\nabla v\|^2_{L^4(B)}).
  \end{split}
\end{equation}
On the other hand, for any $0<t_0<T\le T_0$, multiplying the equation \eqref{7.1} with $w$ 
and integrating by parts on $S^1\times [t_0,T]$, upon letting $t_0\downarrow 0$ we find 
\begin{equation*}
  \begin{split}
 \sup_{0<t<T}&\|w(t)\|^2_{L^2(\partial B)}+\int_0^T\int_B|\nabla w|^2dz\,dt
 \le C\int_0^T\int_{\partial B}(\partial_tw+\partial_rw)w\,d\phi\,dt\\
 &=C\int_0^T\int_{\partial B}w(\nu(u)-\nu(v))\partial_r(dist_N(u))d\phi\,dt\\
 &\quad+C\int_0^T\int_{\partial B}w\,\nu(v)\partial_r(dist_N(u)-dist_N(v))\,d\phi\,dt
 =:C\int_0^T(I+II)dt.
  \end{split}
\end{equation*}
We first estimate the term
\begin{equation*}
  \begin{split}
   I&=I(t)=\int_{\partial B}w(\nu(u)-\nu(v))\partial_r(dist_N(u))\,d\phi\\
   &=\int_B\nabla\big(w(\nu(u)-\nu(v))\big)\nabla(dist_N(u))\,dz\\
   &\quad+\int_Bw(\nu(u)-\nu(v))\Delta(dist_N(u))\,dz.
   \end{split}
\end{equation*}
Using
\begin{equation*}
 \begin{split}
 |\nabla\big(w(\nu(u)-\nu(v))\big)|&\le C|\nabla w||w|
 +|w\big((d\nu(u)-d\nu(v))\nabla u+d\nu(v)\nabla w\big)|\\
 &\le C(|\nabla w||w|+|w|^2|\nabla u|)
 \end{split}
\end{equation*}
we can bound 
\begin{equation*}
  \begin{split}
   |\int_B&\nabla\big(w(\nu(u)-\nu(v))\big)\nabla(dist_N(u))dz|
   \le C\int_B|(\nabla w||w|+|w|^2|\nabla u|)|\nabla u|dz\\
 &\le \varepsilon\|\nabla w\|^2_{L^2(B)}+C(\varepsilon)\|w\|^2_{L^4(B)}\|\nabla u\|^2_{L^4(B)}
  \end{split}
\end{equation*}
for each $t$. Also using \eqref{3.5}, we can moreover estimate 
\begin{equation*}
  \begin{split}
   |\int_B&w(\nu(u)-\nu(v))\Delta(dist_N(u))\,dz|
   \le C\|w\|^2_{L^4(B)}\|\nabla u\|^2_{L^4(B)}
  \end{split}
\end{equation*}
for almost every $0<t<T$ to obtain
\begin{equation*}
  \begin{split}
   |I|\le \varepsilon\|\nabla w\|^2_{L^2(B)}
   +C(\varepsilon)\|w\|^2_{L^4(B)}\|\nabla u\|^2_{L^4(B)}.
  \end{split}
\end{equation*}

Similarly, we estimate the term
\begin{equation*}
  \begin{split}
   II&=II(t)=\int_{\partial B}w\,\nu(v)\partial_r((dist_N(u)-dist_N(v))\,d\phi\\
   &=\int_B\nabla(w\nu(v))\nabla(dist_N(u)-dist_N(v))dz\\
   &\quad+\int_Bw\,\nu(v)\Delta(dist_N(u)-dist_N(v))\,dz.
  \end{split}
\end{equation*}
Noting that with \eqref{7.3} we can bound
\begin{equation*}
  \begin{split}
   |\int_B&\nabla(w\nu(v))\nabla(dist_N(u)-dist_N(v))dz|\\
   &\le C(\|\nabla w\|_{L^2(B)}+\|w\nabla v\|_{L^2(B)})\|\nabla(dist_N(u)-dist_N(v))\|_{L^2(B)}\\
   &\le\varepsilon\|\nabla w\|^2_{L^2(B)}
   +C(\varepsilon)\|w\|^2_{L^4(B)}(\|\nabla u\|^2_{L^4(B)}+\|\nabla v\|^2_{L^4(B)})
   \end{split}
\end{equation*}
and that with \eqref{7.2} we have
\begin{equation*}
  \begin{split}
  |\int_B&w\nu(v)\Delta(dist_N(u)-dist_N(v))\,dz|\\
  &\le C\int_B(|w|^2|\nabla u|^2+|w||\nabla w|(|\nabla u|+|\nabla v|))\,dz\\
  &\le\varepsilon\|\nabla w\|^2_{L^2(B)}
  +C(\varepsilon)\|w\|^2_{L^4(B)}(\|\nabla u\|^2_{L^4(B)}+\|\nabla v\|^2_{L^4(B)})
  \end{split}
\end{equation*}
we find the estimate
\begin{equation*}
  \begin{split}
   |II|\le\varepsilon\|\nabla w\|^2_{L^2(B)}+C(\varepsilon)\|w\|^2_{L^4(B)}
   (\|\nabla u\|^2_{L^4(B)}+\|\nabla v\|^2_{L^4(B)})
  \end{split}
\end{equation*}
for almost every $0<t<T$.

But Sobolev's embedding $H^{1/2}(B)\hookrightarrow L^4(B)$ and Fourier expansion give
the bound 
\begin{equation*}
   \|w\|^2_{L^4(B)}\le C\|w\|^2_{H^{1/2}(B)}\le C\|w\|^2_{L^2(\partial B)}
\end{equation*}
and similar bounds for $\nabla u$ as well as $\nabla v$.
Moreover, since by the energy inequality \eqref{6.2} we have $u(t),v(t)\to u_0$ strongly
in $H^1(B)$ as $t\downarrow 0$, there exist a radius $0<R\le1/2$ and a time $0<T<T_0$ 
such that condition \eqref{3.13} in Proposition \ref{prop3.3} holds true on
$[0,T]$ for both $u$ and $v$, allowing to bound   
\begin{equation*}
  \begin{split}
   \int_0^T&\|\nabla u(t)\|^2_{L^4(B)}dt
   \le C\int_0^T\|\nabla u(t)\|^2_{L^2(\partial B)}dt
   \le C\int_0^T\|\partial_{\phi}u(t)\|^2_{L^2(\partial B)}dt\\
   &\le C\int_0^T\int_{\partial B}|u_t|^2d\phi\,dt+C(R)TE(u_0)\le C(R)(1+T_0)E(u_0)
  \end{split}
\end{equation*}
with the help of \eqref{3.7}, and similarly for $|\nabla v|$. 
Choosing $\varepsilon=1/4$, for sufficiently small $0<T<T_0$ by absolute continuity of
the integral we thus can estimate
\begin{equation*}
 \begin{split}
 &\sup_{0<t<T}\|w(t)\|^2_{L^2(\partial B)}+\int_0^T\int_B|\nabla w|^2dz\,dt\\
 &\quad\le\frac12\|\nabla w\|^2_{L^2(B\times[0,T])}
 +C\sup_{0<t<T}\|w(t)\|^2_{L^2(\partial B)}
 \int_0^T(\|\nabla u\|^2_{L^4(B)}+\|\nabla v\|^2_{L^4(B)})dt\\
 &\quad\le\frac12\Big(\sup_{0<t<T}\|w(t)\|^2_{L^2(\partial B)}
 +\int_0^T\int_B|\nabla w|^2dz\,dt\Big),
 \end{split}
\end{equation*}
and it follows that $w=0$, as claimed.
\end{proof}

\begin{proof}[Proof of Theorem \ref{thm1.2}]
Existence for short time and uniqueness of a partially regular weak solution 
to \eqref{1.3}, \eqref{1.4} for given data $u_0\in H^{1/2}(S^1;N)$ follow from 
Proposition \ref{prop6.4} and Theorem \ref{thm7.1}, respectively. 
Since by Proposition \ref{prop6.4} our weak solution is smooth for $t>0$, the remaining 
assertions follow from Theorem \ref{thm1.1}. 

Note that at any blow-up time $T_{i-1}$, $i\ge 1$, of the flow as in Theorem \ref{thm1.1}.ii) 
there exists a unique weak limit $u_i=\lim_{t\uparrow T_{i-1}}u(t)\in H^{1/2}(S^1;N)$,
and we may uniquely continue the flow using Proposition \ref{prop6.4}.
\end{proof}

\section{Blow-up}\label{Blow-up}
Preparing for the proof of part ii) of Theorem \ref{thm1.1} suppose now that for the solution
constructed in part i) of that theorem there holds $T_0<\infty$.
Then, as we shall see in more detail below, by the results in Section \ref{Higher regularity} 
condition \eqref{4.4} must be violated for $T=T_0$ and there exist $\delta>0$ and points 
$z_k\in B$ as well as radii $r_k\downarrow 0$ as $k\to\infty$ such that for suitable 
$t_k\uparrow T_0$ there holds 
\begin{equation*}
   \int_{B_{r_k}(z_k)\cap B}|\nabla u(t_k)|^2dz
   =\sup_{z_0\in B,\,t\le t_k}\int_{B_{r_k}(z_0)\cap B}|\nabla u(t)|^2dz=\delta.
\end{equation*}
We may later choose a smaller constant $\delta>0$, if necessary. Moreover, for later use
from now on we consider local concentrations in the sense that for some $z_0\in B$ and some 
fixed radius $r_0>0$ for a sequence of points $z_k\in B$ with $z_k\to z_0$ and radii 
$r_k\downarrow 0$ for suitable $t_k\uparrow T_0$ as $k\to\infty$ there holds
\begin{equation*}
   \int_{B_{r_k}(z_k)\cap B}|\nabla u(t_k)|^2dz
   =\sup_{z'\in B_{r_0}(z_0),\,t\le t_k}\int_{B_{r_k}(z')\cap B}|\nabla u(t)|^2dz=\delta.
\end{equation*}

Scale 
\begin{equation*}
  u_k(z,t)=u(z_k+r_kz,t_k+r_kt)
\end{equation*}
for 
\begin{equation*}
  z\in\Omega_k=\{z;\;z_k+r_kz\in B\},\ t\in I_k=\{t;\; 0\le t_k+tr_k<T_0\}.
\end{equation*}
Note that then there holds
\begin{equation}\label{8.1}
 \begin{split}
   \int_{B_1(0)\cap\Omega_k}&|\nabla u_k(0)|^2dz\\
   &=\sup_{z_k+r_kz'\in B_{r_0}(z_0),-t_k/r_k\le t<0}
   \int_{B_1(z')\cap\Omega_k}|\nabla u_k(t)|^2dz=\delta.
 \end{split}
\end{equation}
Passing to a sub-sequence we may assume that the domains $\Omega_k$ exhaust a limit domain
$\Omega_{\infty}\subset\R^2$, which either is the whole space $\R^2$ or a half-space $H$.

By the energy inequality Lemma \ref{lemma2.1} for $t\in I_k$
there holds
\begin{equation}\label{8.2}
   \int_{\Omega_k}|\nabla u_k(t)|^2dz=\int_{B}|\nabla u(t_k+r_kt)|^2dz\le 2E(u_0),
\end{equation}
and for any $t_0<0$ and sufficiently large $k\in\N$ we have
\begin{equation}\label{8.3}
 \begin{split}
  \int_{t_0}^0&\int_{\partial\Omega_k}|\partial_tu_k|^2ds\;dt
  =\int_{t_0}^0\int_{\partial\Omega_k}|d\pi_N(u_k)\partial_{\nu_k}u_k|^2ds\;dt\\
  &=\int_{t_k+r_kt_0}^{t_k}\int_{\partial B}|u_t|^2d\phi\;dt
  \le\int_{t_k+r_kt_0}^{T_0}\int_{\partial B}|u_t|^2d\phi\;dt\to 0
 \end{split}
\end{equation}
as $k\to\infty$, where $ds$ is the element of length and where $\nu_k$ is the outward unit 
normal along $\partial\Omega_k$.
Expressing the harmonic functions $\partial_tu_k(t)$ in Fourier series for each $t<0$,
it then also follows that $\partial_tu_k\to 0$ locally in 
$L^2$ on $\Omega_{\infty}\times]-\infty,0[$. Finally, again using the fact that 
$u_k(t)$ for each $t$ is harmonic, by the maximum principle we have the uniform bound
$|u_k|\le\sup_{p\in\Gamma}|p|$ as well as uniform smooth bounds locally away from the 
boundary of $\Omega_{\infty}$.
 
Hence we may assume that as $k\to\infty$ we have $u_k\to u_{\infty}$ weakly locally in 
$H^1$ on $\Omega_{\infty}\times]-\infty,0[$, where $u_{\infty}(z,t)=u_{\infty}(z)$ is 
independent of time, harmonic, and bounded. Moreover, we have smooth convergence away from 
$\partial\Omega_{\infty}$. 
Thus, if we assume that $\Omega_{\infty}=\R^2$ by \eqref{8.1} it follows that  
\begin{equation*}
   \int_{B_1(0)}|\nabla u_{\infty}|^2dz=\delta.
\end{equation*}
But any function $v\colon\R^2\to\R$ which is bounded and harmonic must be constant,
which rules out this possibility. Hence $\Omega_{\infty}$ can only be a half-space. 

After a suitable rotation of the domain $B$ and shift of coordinates in $\R^2\cong\C$
we may then assume that $z_k=(0,-y_k)$ with $1-y_k\le Mr_k$ for some $M\in\N$ and that 
$\Omega_{\infty}=\{(x,y);\;y>y_0\}$ for some $y_0$. Finally, replacing $r_k>0$ with $(M+1)r_k$
and $z_k$ with $z_k=(0,-1)$, if necessary, we may assume that 
$\Omega_k\subset\R^2_+=\{(x,y);\;y>0\}$ is the ball of radius $1/r_k$ around the point 
$(0,1/r_k)$ with $0\in\partial\Omega_k$, 
while from \eqref{8.1} with a uniform number $L\in\N$ we have
\begin{equation}\label{8.4}
   L\int_{B_1(0)\cap\Omega_k}|\nabla u_k(0)|^2dz
   \ge L\delta\ge\sup_{|z'|\le r_0/r_k,-t_k/r_k\le t<0}
   \int_{B_1(z')\cap\Omega_k}|\nabla u_k(t)|^2dz
\end{equation}
for any $k\in\N$. Let $\Phi_k\colon\R^2_+\to\Omega_k$ be the conformal maps given by 
\begin{equation*}
    \Phi_k(z)=\frac{2z}{2-ir_kz},\ z\in\R^2_+,\ k\in\N,
\end{equation*}
with $\Phi_k\to id$ locally uniformly on $\R^2\cong\C$ as $k\to\infty$. 

Let $v_k=u_k\circ\Phi_k$, $k\in\N$. By
conformal invariance of the Dirichlet energy, from \eqref{8.2} for any $t$ we have 
\begin{equation}\label{8.5}
   \int_{\R^2_+}|\nabla v_k(t)|^2dz=\int_{\Omega_k}|\nabla u_k(t)|^2dz\le 2E(u_0),
\end{equation}
and by \eqref{8.4} with a uniform number $L_1\in\N$ 
there holds
\begin{equation}\label{8.6}
   L_1\int_{B^+_2(0)}|\nabla v_k(0)|^2dz\ge L_1\delta
   \ge\sup_{|z'|\le r_0/r_k,-t_k/r_k\le t<0}\int_{B^+_1(z')}|\nabla v_k(t)|^2dz,
\end{equation}
where $B^+_r(z)=B_r(z)\cap\R^2_+$ for any $r>0$ and any $z=(x,y)\in\R^2$. 
Moreover, from \eqref{8.3} for any $t_0<0$ and any $R>0$ for the integral over 
$]-R,R\,[\times\{0\}\subset\partial\R^2_+$ we obtain  
\begin{equation}\label{8.7}
 \begin{split}
  \int_{t_0}^0\int_{-R}^R&|\partial_tv_k|^2dx\;dt\\
  &\le C\int_{t_0}^0\int_{-R}^R|d\pi_N(v_k)\partial_yv_k|^2dx\;dt\to 0\ \hbox{ as }k\to\infty, 
 \end{split}
\end{equation}
and $\partial_tv_k\to 0$ locally in $L^2$ on $\overline{\R^2_+}\times]-\infty,0[$. 
In addition, from our choice of $(u_k)$ it follows that $v_k\to v_{\infty}$ weakly
locally in $H^1$ on $\overline{\R^2_+}\times]-\infty,0[$ as $k\to\infty$, 
where $v_{\infty}(z,t)=:w_{\infty}(z)$ is harmonic and bounded. 

For a suitable sequence of times $t_0<s_k<0$, 
we then also have locally weak convergence $w_k:=v_k(s_k)\to w_{\infty}$ in $H^1$ on
$\overline{\R^2_+}$ and, in addition,
\begin{equation}\label{8.8}
  d\pi_N(w_k)\partial_yw_k\to 0\ \hbox{ in } L_{loc}^2(\partial\R^2_+)\ \hbox{ as }k\to\infty.
\end{equation}
Thus, for sufficiently small $\delta>0$ by Proposition \ref{prop3.3}, applied to the functions
$w_k\circ\Psi$, where $\Psi\colon B\to\R^2_+$ is a suitable conformal map, we also have uniform 
local $L^2$-bounds for $\partial_xw_k$ on $\partial\R^2_+$, and we may assume that 
$w_k\to w_{\infty}$ locally uniformly and weakly 
locally in $H^1$ on $\partial\R^2_+$ as $k\to\infty$. Since $w_k$ is harmonic,
we then also have locally strong $H^1$-convergence $w_k\to w_{\infty}$ on $\overline{\R^2_+}$.

To see that $w_{\infty}$ is non-constant, let $\varphi_k=\varphi_{z_0,4r_k}$, $k\in\N$.
Integrating the identity \eqref{2.1} from the proof of Lemma \ref{lemma2.2} in time, 
with error $o(1)\to 0$ and suitable numbers $\varepsilon_k\downarrow 0$ as $k\to\infty$ in
view of \eqref{8.3} we find  
\begin{equation}\label{8.9}
 \begin{split}
  \frac12\big|\int_B&|\nabla u(t_k)|^2\varphi_k^2dz
  -\int_B|\nabla u(t_k+r_ks_k)|^2\varphi_k^2dz\big|\\
  &\le\int_{t_k+r_ks_k}^{t_k}\int_{\partial B}|u_t|^2\varphi_k^2d\phi\,dt
  +2\int_{t_k+r_ks_k}^{t_k}\int_B|u_t\nabla u\varphi_k\nabla\varphi_k|dz\,dt\\
  &\le o(1)+8\varepsilon_kr_k\int_{t_k+r_ks_k}^{t_k}\int_B|\nabla u|^2|\nabla\varphi_k|^2dz\,dt\\
  &\qquad+(8\varepsilon_kr_k)^{-1}\int_{t_k+r_ks_k}^{t_k}\int_B|u_t|^2\varphi_k^2dz\,dt.
 \end{split}
\end{equation}
With the help of \eqref{2.2} and \eqref{8.3} for suitable $\varepsilon_k\downarrow 0$
we can bound 
\begin{equation*}
 \begin{split}
  (8\varepsilon_kr_k)^{-1}\int_{t_k+r_ks_k}^{t_k}\int_B|u_t|^2\varphi_k^2dz\,dt
  \le C\varepsilon_k^{-1}\int_{t_k+r_ks_k}^{t_k}\int_{\partial B}|u_t|^2dz\,dt\to 0.
 \end{split}
\end{equation*}
Since for any choice $t_0<s_k<0$ we also can estimate 
\begin{equation*}
 \begin{split}
  8\varepsilon_kr_k\int_{t_k+r_ks_k}^{t_k}\int_B|\nabla u|^2|\nabla\varphi_k|^2dz\,dt
  \le C\varepsilon_k|t_0|E(u_0))\to 0,
 \end{split}
\end{equation*}
from \eqref{8.9} and \eqref{8.6} it follows that with error $o(1)\to 0$ as $k\to\infty$ 
we have
\begin{equation}\label{8.10}
 \begin{split}
    L_1\int_{B^+_4(0)}&|\nabla w_k|^2dz+o(1)=L_1\int_{B^+_4(0)}|\nabla v_k(s_k)|^2dz+o(1)\\
   &\ge L_1\int_{B}|\nabla u(t_k+r_ks_k)|^2\varphi_k^2dz+o(1)
   \ge L_1\int_{B}|\nabla u(t_k)|^2\varphi_k^2dz\\
   &\ge L_1\int_{B^+_2(0)}|\nabla v_k(0)|^2dz\ge L_1\delta.
 \end{split}
\end{equation}

Finally, in view of locally uniform convergence $w_k\to w_{\infty}$ and weak local 
$L^2$-convergence of the traces $\nabla w_k\to\nabla w_{\infty}$ on $\partial\R^2_+$,
we may pass to the limit $k\to\infty$ in \eqref{8.8} to conclude that 
\begin{equation}\label{8.11}
         d\pi_N(w_{\infty})\partial_yw_{\infty}=0\ \hbox{ on }\partial\R^2_+.
\end{equation}

Since $w_{\infty}$ is harmonic, the Hopf differential 
\begin{equation*}
  f=|\partial_xw_{\infty}|^2-|\partial_yw_{\infty}|^2
  -2i\partial_xw_{\infty}\cdot\partial_yw_{\infty}
\end{equation*}
defines a holomorphic function $f\in L^1(\R^2_+,\C)$.
Moreover, $w_{\infty}\in H^{3/2}_{loc}(\R^2_+)$ 
with trace $\nabla w_{\infty}\in L^2_{loc}(\partial\R^2_+)$;
thus also the trace of $f$ is well-defined on $\partial\R^2_+$. By \eqref{8.11} now the trace 
of $f$ is real-valued; thus $f\equiv c$ for some constant $c\in\R$.
But $\nabla w_{\infty}\in L^2(\R^2_+)$; hence $f\in L^1(\R^2_+)$. It follows that $c=0$,
and $w_{\infty}$ is conformal. 

With stereographic projection 
$\Phi\colon B\to\R^2_+$ from a point $z_0\in\partial B$
define the map $\bar{u}=w_{\infty}\circ\Phi\in H^{1/2}(S^1;N)$. 
By conformal invariance, $\bar{u}$ again is harmonic with finite Dirichlet integral 
and satisfies \eqref{1.6} 
on $\partial B\setminus\{z_0\}$; 
since the point $\{z_0\}$ has vanishing $H^1$-capacity, $\bar{u}$ then is stationary 
in the sense of \cite{Gruter-et-al-1981}.
Moreover, $\bar{u}$ is conformal. For such mappings, smooth regularity on $\bar{B}$ 
was shown by Gr\"uter-Hildebrandt-Nitsche \cite{Gruter-et-al-1981}; 
thus condition \eqref{1.6} holds everywhere on $\partial B$ in the pointwise sense,
and $\bar{u}$ parametrizes a minimal surface of 
finite area supported by $N$ which meets $N$ orthogonally along its boundary. 

\begin{proof}[Proof of Theorem \ref{thm1.1}.ii)]
For given smooth data $u_0\in H^{1/2}(S^1;N)$ let $u$ be the unique solution 
to \eqref{1.3}, \eqref{1.4} guaranteed by part i) of the theorem, and suppose that the
maximal time of existence $T_0<\infty$. Then condition \eqref{4.4} must fail as 
$t\uparrow T_0$; else from Propositions \ref{prop4.11} and \ref{prop4.6} we obtain 
smooth bounds for $u(t)$ as $t\uparrow T_0$ and there exists a smooth trace 
$u_1=\lim_{t\uparrow T_0}u(t)$. But by the first part of the theorem there is a smooth 
solution to the initial value problem for \eqref{1.3} with initial data $u_1$ at time $T_0$,
and this solution extends the original solution $u$ to an interval $[0,T_1[$ for some 
$T_1>T_0$, contradicting maximality of $T_0$.

Let $z^{(i)}\in B$, $1\le i\le i_0$, such that for some number $\delta>0$ and 
suitable $t_k^{(i)}\uparrow T_0$, $z_k^{(i)}\to z^{(i)}$, $r_k^{(i)}\to 0$ as $k\to\infty$ 
there holds
\begin{equation*}
   \liminf_{k\to\infty}\int_{B_{r_k^{(i)}}(z_k^{(i)})\cap B}|\nabla u(t_k^{(i)})|^2dz\ge\delta.
\end{equation*}
By the argument following \eqref{8.9} thus for a suitable sequence of radii 
$0<r_k^{(0)}\to 0$ such that $r_k^{(i)}/r_k^{(0)}\to 0$ as well as $(T_0-t_k^{(i)})/r_k^{(0)}\to 0$
then with error $o(1)\to 0$ as $k\to\infty$ there holds
\begin{equation*}
   \int_{B_{r_k^{(0)}}(z^{(i)})\cap B}|\nabla u(t)|^2dz+o(1)
   \ge\int_{B_{r_k^{(i)}}(z_k^{(i)})\cap B}|\nabla u(t_k^{(i)})|^2dz\ge\delta.
\end{equation*}
for all $T_0-r_k^{(0)}<t<T_0$, uniformly in $1\le i\le i_0$. 
For sufficiently large $k\in\N$ such that $r_k^{(0)}<\inf_{i<j}|z^{(i)}-z^{(j)}|/4$ it 
follows that $i_0\le E(u_0)/\delta$, and we may fix $r_0>0$ and redefine $t_k^{(i)}$, 
$r_k^{(i)}$, and $z_k^{(i)}$, if necessary, such that for each $1\le i\le i_0$ there holds 
\begin{equation*}
   \int_{B_{r_k^{(i)}}(z_k^{(i)})\cap B}|\nabla u(t_k^{(i)})|^2dz
   =\sup_{z'\in B_{r_0}(z^{(i)}),\,0<t\le t_k^{(i)}}\int_{B_{r_k^{(i)}}(z')\cap B}|\nabla u(t)|^2dz=\delta.
\end{equation*}
Moreover, we may assume that $\delta<\delta_0$, as defined in Proposition \ref{prop3.1}.
The characterization of the concentration points as in Theorem \ref{thm1.2}.ii) via 
solutions $\bar{u}^{(i)}$ of \eqref{1.6} then follows from our above analysis.

In addition, Corollary \ref{cor3.2} yields the uniform lower bound 
\begin{equation*}
   \lim_{r_0\downarrow 0}\liminf_{t\uparrow T}\int_{B_{r_0}(z^{(i)})\cap B}|\nabla u(t)|^2dz
   \ge 2E(\bar{u}^{(i)})\ge 2\delta_0^2
\end{equation*}
for the concentration energy quanta, which gives the claimed upper bound for the total number of 
concentration points. 

Finally, with the help of Proposition \ref{prop4.11} we can smoothly extend the solution $u$ to 
$B\setminus\{z^{(1)},\dots,z^{(i_0)}\}$ at time $t=T_0$.
\end{proof}

\section{Asymptotics}\label{Asymptotics}
Suppose next that the solution $u$ to \eqref{1.3}, \eqref{1.4} exists for all time $0<t<\infty$.
Then $u$ either concentrates for suitable $t_k\uparrow\infty$ in the sense that 
condition \eqref{4.4} does not hold true uniformly in time, or $u$ satisfies uniform smooth 
bounds, as shown in Section \ref{Higher regularity}. 

In the latter case, the claim made in Theorem \ref{thm1.1}.iii) easily follows. 

\begin{proposition}\label{prop9.1}
Suppose that for any $\delta>0$ there exists $R>0$ such that condition \eqref{4.4} holds 
true for all $0<t<\infty$. Then there exists a smooth solution $u_{\infty}\in H^{1/2}(S^1;N)$ of 
\eqref{1.6} such that $u(t)\to u_{\infty}$ smoothly as $t\to\infty$ suitably, 
and $u_{\infty}$ parametrizes a minimal surface of finite area supported by $N$ which meets 
$N$ orthogonally along its boundary.
\end{proposition}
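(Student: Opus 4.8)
The plan is to argue exactly as in Section~\ref{Weak solutions} after Lemma~\ref{lemma6.2}, now letting $t\to\infty$ in place of approaching a finite blow-up time. By Lemma~\ref{lemma2.1} the energy $t\mapsto E(u(t))$ is non-increasing and non-negative, hence converges to some $E_\infty\ge0$, and $\int_0^\infty\int_{\partial B}|u_t|^2\,d\phi\,dt\le E(u_0)<\infty$; in particular there is a sequence $t_k\to\infty$ with $\|u_t(t_k)\|_{L^2(S^1)}\to0$, so that $u(t_k)$ solves the stationary equation \eqref{3.1} with datum $f_k=u_t(t_k)\to0$ in $L^2(S^1)$. (Note that since \eqref{4.4} holds for all $t$, the flow is globally defined and smooth for $t>0$, so these restarts make sense.) The first and main task is to show that the $u(t_k)$ are uniformly bounded in every $H^m(B)$. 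Granting this, a diagonal/Rellich extraction gives a subsequence $u(t_{k_j})\to u_\infty$ in $C^\infty(\bar B)$, where $u_\infty$ is harmonic with $u_\infty(z)\in N$ for $z\in S^1$; passing to the limit in $u_t(t_{k_j})=-d\pi_N(u(t_{k_j}))\partial_ru(t_{k_j})$ (the right-hand side converges smoothly, the left-hand side to $0$ in $L^2(S^1)$) yields $d\pi_N(u_\infty)\partial_ru_\infty=0$, i.e. $u_\infty$ solves \eqref{1.6}; and $E(u_\infty)=\lim_jE(u(t_{k_j}))=E_\infty$.

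For the uniform-in-$t$ regularity I would first obtain a uniform bound at the level of $\int_{\partial B}|u_\phi|^2\,d\phi$: feeding the hypothesis \eqref{4.4} (valid for \emph{all} $t$, with a fixed $R$) into Lemma~\ref{lemma4.1} as in the proof of Proposition~\ref{prop4.2}, and writing $g(t)=\int_{\partial B}|u_\phi(t)|^2\,d\phi$, one gets a differential inequality of the form $g'+\tfrac12\int_B|\nabla u_\phi|^2\,dz\le C$, valid for all $t$ since $E$ is globally bounded and $R$ is fixed; as $\int_B|\nabla u_\phi|^2\,dz$ controls $g$ from below by Fourier expansion of the harmonic function $u_\phi$, Gronwall's inequality gives $\sup_{t}g(t)<\infty$ and $\int_T^{T+1}\int_B|\nabla u_\phi|^2\,dz\,dt\le C$ uniformly in $T$. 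I would then run the alternating regularity ladder, exactly the one used in Section~\ref{Weak solutions} and in the proof of Theorem~\ref{thm1.1}.i): starting from the uniform boundary bound on $u_\phi$, in each unit time-interval one selects a ``good'' time $\sigma$ at which the relevant $\int_B|\nabla\partial_\phi^ku(\sigma)|^2\,dz$ is controlled, restarts the flow at $\sigma$, and applies Propositions~\ref{prop4.8}/\ref{prop4.10}/\ref{prop4.11} (boundary estimates, which need only boundary data and which simultaneously propagate the boundary bound forward and produce a space-time $B$-bound) alternated with Propositions~\ref{prop4.2}/\ref{prop4.4}/\ref{prop4.5}/\ref{prop4.6} (global estimates, which turn the $B$-bound just obtained at $\sigma$ into a pointwise-in-time $B$-bound together with a space-time boundary bound one order higher), all with the cut-offs set equal to $1$. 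Since every sufficiently large $t$ lies in one of these intervals $[\sigma,\sigma+1]$, one arrives at $\sup_{t\ge T_m}\|u(t)\|_{H^m(B)}\le C_m<\infty$ for every $m$, which for $t_k>T_m$ is the bound needed above.

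The remaining geometric statement follows as in Section~\ref{Blow-up}: $u_\infty$ being harmonic, its Hopf differential $f=|\partial_xu_\infty|^2-|\partial_yu_\infty|^2-2i\,\partial_xu_\infty\cdot\partial_yu_\infty$ is holomorphic on $B$, and \eqref{1.6} forces $\partial_ru_\infty\cdot\partial_\phi u_\infty=0$ on $\partial B$, so that the trace of $f$ is real and hence $f$ is a real constant; since $\nabla u_\infty\in L^2(B)$, $f\in L^1(B)$, whence $f\equiv0$ and $u_\infty$ is conformal. By \cite{Gruter-et-al-1981} such a map is smooth up to $\bar B$, and by the Da Lio-Rivi\`ere interpretation \cite{Da Lio-Riviere-2011} of \eqref{1.6} it parametrises a minimal surface of finite area $2E_\infty$ supported by $N$ which meets $N$ orthogonally along its boundary (orthogonality being precisely $\partial_ru_\infty\perp T_{u_\infty}N$). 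I expect the genuine difficulty to lie entirely in the bookkeeping of the second step: one must check that the constant handed on by each proposition in the ladder is controlled only by $E(u_0)$, $N$, the fixed radius $R$ of \eqref{4.4} and the uniform bounds already established — not by uncontrolled higher derivatives of the shifted datum $u(\sigma)$ — which is exactly why the boundary estimates (needing only boundary data) must be interleaved with the global ones (fed by the free $B$-bound at the good time $\sigma$).
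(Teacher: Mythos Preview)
Your overall strategy matches the paper's: obtain uniform-in-time $H^m$-bounds via the alternating ladder of Propositions~\ref{prop4.2}--\ref{prop4.6} and \ref{prop4.8}--\ref{prop4.11} (the paper simply writes ``by iterative reference \dots\ as in Section~\ref{Weak solutions}''), select $t_k\to\infty$ with $u_t(t_k)\to0$ in $L^2(\partial B)$ from Lemma~\ref{lemma2.1}, extract a smooth limit solving~\eqref{1.6}, and then argue conformality via the Hopf differential. Your Gronwall step for $g(t)=\|u_\phi(t)\|_{L^2(\partial B)}^2$, based on the Fourier inequality $\int_B|\nabla u_\phi|^2\,dz\ge g$, is a clean way to make the first rung uniform in $t$, and the good-time selection on unit intervals is the right bookkeeping device for the higher rungs.

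There is, however, a genuine error in your conformality argument. With $f=|\partial_xu_\infty|^2-|\partial_yu_\infty|^2-2i\,\partial_xu_\infty\cdot\partial_yu_\infty$, the condition $\partial_ru_\infty\cdot\partial_\phi u_\infty=0$ on $\partial B$ does \emph{not} make the trace of $f$ real: expressing $\partial_x,\partial_y$ in terms of $\partial_r,\partial_\phi$ at $z=e^{i\theta}$ gives
\[
f\big|_{\partial B}=e^{-2i\theta}\Big((|\partial_ru_\infty|^2-|\partial_\phi u_\infty|^2)-2i\,\partial_ru_\infty\cdot\partial_\phi u_\infty\Big),
\]
so it is $z^2f(z)$ whose trace is real, not $f$ itself. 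Moreover, even granting that $f$ is a real constant, your conclusion ``$f\in L^1(B)$, whence $f\equiv0$'' is false on the \emph{bounded} disc; that step of the paper's argument after~\eqref{8.11} works only because $\R^2_+$ has infinite measure. The fix is either to transplant $u_\infty$ to $\R^2_+$ by a M\"obius map and run the half-plane argument verbatim (this is what the paper does when it invokes Section~\ref{Blow-up}), or to argue directly with the holomorphic function $g(z)=z^2f(z)$ on $B$: its trace on $\partial B$ is real, so $\im g$ is harmonic with zero boundary values, hence $g$ is a real constant, and $g(0)=0$ forces $g\equiv0$, i.e.\ $f\equiv0$. Finally, the appeal to \cite{Gruter-et-al-1981} for smoothness of $u_\infty$ is unnecessary here, since smooth convergence $u(t_{k_j})\to u_\infty$ already gives $u_\infty\in C^\infty(\bar B)$.
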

 
\begin{proof} 
For sufficiently small $\delta>0$, for any $j\in\N$ by iterative reference to 
Propositions \ref{prop4.2}, \ref{prop4.4} - \ref{prop4.6}, 
and \ref{prop4.10}, \ref{prop4.11}, respectively, as in Section \ref{Weak solutions} 
we can find constants $C_j>0$ such that $\|u(t)\|_{H^j(B)}\le C_j$ for all $t>1$, Moreover, 
by the energy inequality Lemma \ref{lemma2.1} for a suitable sequence $t_k\to\infty$
there holds $u_t(t_k)\to 0$ in $L^2(\partial B)$ as $k\to\infty$. Then for any $j\in\N$ a 
subsequence $u(t_k)\to u_{\infty}$ in $H^j(B)$, and a diagonal subsequence converges smoothly, 
where $u_{\infty}$ solves \eqref{1.6}. By the argument after \eqref{8.11} in 
Section \ref{Blow-up} then $u_{\infty}$ is conformal and $u_{\infty}$ parametrizes a minimal
surface with free boundary on $N$ which meets $N$ orthogonally along its boundary.
\end{proof} 

In the remaining case that for some $\delta>0$ condition \eqref{4.4} fails to hold, there exists 
a sequence $t_k\uparrow\infty$ and points $z^{(1)},\dots,z^{(i_0)}$ such that for sequences 
$z_k^{(i)}\to z^{(i)}$, radii $r_k^{(i)}\to 0$ as $k\to\infty$ there holds
\begin{equation*}
   \liminf_{k\to\infty}\int_{B_{r_k^{(i)}}(z_k^{(i)})\cap B}|\nabla u(t_k)|^2dz\ge\delta,\
   1\le i\le i_0.
\end{equation*}
By Lemma \ref{lemma2.1} there holds the a-priori bound $i_0\le E(u_0)/\delta$ for the number 
of concentration points. By the argument leading to \eqref{8.10} then for a suitable number
$0<r_0\le\inf_{i<j}|z^{(i)}-z^{(j)}|/4$ with error $o(1)\to0$ as $k\to\infty$ 
and with some constant $L\in\N$ for all $1\le i\le i_0$ there holds
\begin{equation*}
 \begin{split}
   L\int_{B_{2r_k^{(i)}}(z_k^{(i)})\cap B}&|\nabla u(t_k)|^2dz+o(1)\\
   &\ge\sup_{z_0\in B_{r_0}(z_k^{(i)}),\,t_k-r_0\le t\le t_k}
   \int_{B_{r_k^{(i)}}(z_0)\cap B}|\nabla u(t)|^2dz\ge\delta.
 \end{split}
\end{equation*}
Fixing any index $1\le i\le i_0$ and renaming $z_k^{(i)}=:z_k$, $r_k^{(i)}=:r_k$, we then scale 
\begin{equation*}
  u_k(z,t)=u(z_k+r_kz,t_k+r_kt), \ z\in\Omega_k=\{z;z_k+r_kz\in B\},\ -t_k/r_k\le t\le 0,
\end{equation*}
as before and observe that for any $t_0<0$ there holds
\begin{equation}\label{9.1}
 \begin{split}
  \int_{t_0}^0&\int_{\partial\Omega_k}|\partial_tu_k|^2ds\;dt
  =\int_{t_0}^0\int_{\partial\Omega_k}|d\pi_N(u_k)\partial_{\nu_k}u_k|^2ds\;dt\\
  &=\int_{t_k+r_kt_0}^{t_k}\int_{\partial B}|u_t|^2d\phi\;dt
  \le\int_{t_k+r_kt_0}^{\infty}\int_{\partial B}|u_t|^2d\phi\;dt\to 0
 \end{split}
\end{equation}
as $k\to\infty$, where $\nu_k$ is the outward unit normal along $\partial\Omega_k$.
Just as in Section \ref{Blow-up} for suitable $t_0<s_k<0$ we then obtain local uniform 
and $H^1$-convergence of a subsequence of the conformally rescaled maps 
$w_k=u_k(s_k)\circ\Phi_k\in H^1_{loc}(\R^2_+)$ to a smooth, harmonic and conformal limit 
$w_{\infty}$ with finite energy and continuously mapping $\partial\R^2_+$ to $N$, inducing 
a solution $\bar{u}_{\infty}=w_{\infty}\circ\Phi\in H^{1/2}(S^1;N)$ of \eqref{1.6}
corresponding to a minimal surface with free boundary on $N$.
This ends the proof of Theorem \ref{thm1.1}.iii) 

\section{Appendix}
In this section, for the convenience of the reader we derive two interpolation inequalities 
that play a crucial role in our arguments.

Let $v\in H^1(B)$, and let $\varphi_{z_i,r}$ as above such that the collection of 
balls $B_r(z_i)$, $1\le i\le i_0$ covers $\bar{B}$ with at most $L$ balls $B_{2r}(z_i)$
overlapping at any $z\in B$, with $L\in\N$ independent of $r>0$. 
We may assume $r<1/8$ so that for any $1\le i\le i_0$ there is a pair of 
orthogonal vectors $e_{1,i}$, $e_{2,i}$ such that for any $z\in B_r(z_i)$ there holds 
$z+se_{1,i}+te_{2,i}\in B$ for any $0\le s,t\le 2r$. After a rotation of coordinates, 
we may assume that $e_{1,i}=(1,0)$, $e_{2,i}=(0,1)$ are the standard basis vectors. 
Writing $\varphi$ for $\varphi_{z_i,r}$ for any $z=(x,y)\in B_r(z_i)$,
by arguing as Ladyzhenskaya \cite{Ladyzhenskaya-1969}, using that 
\begin{equation*}
  (v^2\varphi)(x+2r,y)=0=(v^2\varphi)(x,y+2r),
\end{equation*}
then we can estimate 
\begin{equation}\label{A.1}
 \begin{split}
  v^4(z)&=|(v^2\varphi)(z)|^2\le\int_0^{2r}|\partial_x(v^2\varphi)(x+s,y)|ds
  \cdot\int_0^{2r}|\partial_y(v^2\varphi)(x,y+t)|dt\\
  &\le\int_{\{s;(s,y)\in B\}}|\partial_x(v^2\varphi)(s,y)|ds
  \cdot\int_{\{t;(x,t)\in B\}}|\partial_y(v^2\varphi)(x,t)|dt,
 \end{split}
\end{equation}
and with the help of Fubini's theorem we find 
\begin{equation*}
 \begin{split}
  &\int_{B_r(z_i)}|v|^4dz\le\int_B|v|^4\varphi^2dz
  \le\int_{-\infty}^{\infty}\big(\int_{\{x;(x,y)\in B\}}|(v^2\varphi)(x,y)|^2dx\big)dy\\
  &\le\int_{-\infty}^{\infty}\int_{\{s;(s,y)\in B\}}|\partial_x(v^2\varphi)(s,y)|ds\,dy
  \cdot\int_{-\infty}^{\infty}\int_{\{t;(x,t)\in B\}}|\partial_y(v^2\varphi)(x,t)|dt\,dx\\
  &\le\big(\int_B|\nabla(v^2\varphi)|dz\big)^2
  \le\big(\int_B(2|\nabla v||v\varphi|+v^2|\nabla\varphi|)dz\big)^2\\
  &\le C\big(\int_{B_{2r}(z_i)}|\nabla v|^2dz+r^{-2}\int_{B_{2r}(z_i)}v^2dz\big)
  \int_{B_{2r}(z_i)}v^2dz.
 \end{split}
\end{equation*}
Fixing $r=1/5$ and summing over $1\le i\le i_0$ with an absolute constant $C>0$ 
we obtain the bound
\begin{equation}\label{A.2}
 \begin{split}
 \|v\|^4_{L^4(B)}
 \le C\|v\|^2_{H^1(B)}\|v\|^2_{L^2(B)} 
 \end{split}
\end{equation}
for any $v\in H^1(B)$.


\begin{thebibliography}{1}

\bibitem{Brezis-Gallouet-1980} Br\'ezis, H.; Gallouet, T.:
{\em Nonlinear Schr\"odinger evolution equations}, Nonlinear Anal. 4 (1980), no. 4, 677-681. 

\bibitem{Brendle-2002}
Brendle, Simon: {\em Curvature flows on surfaces with boundary},
Math. Ann. 324 (2002), no. 3, 491-519.
 
\bibitem{Brezis-Wainger-1980} Br\'ezis, Haim; Wainger, Stephen:
{\em A note on limiting cases of Sobolev embeddings and convolution inequalities},
Comm. Partial Differential Equations 5 (1980), no. 7, 773-789.
 
\bibitem{Caffarelli-Silvestre-2007}
Caffarelli, Luis; Silvestre, Luis: {\em An extension problem related to the fractional Laplacian},
Comm. Partial Differential Equations 32 (2007), no. 7-9, 1245-1260. 

\bibitem{Chang-Ding-Ye-1992}
  Chang, Kung-Ching; Ding, Wei Yue; Ye, Rugang:
  {\em Finite-time blow-up of the heat flow of harmonic maps from surfaces},
  J. Differential Geom. 36 (1992), no. 2, 507-515.
  
\bibitem{Chang-Liu-2005}
Chang, Kung-Ching; Liu, Jia-Quan: {\em Boundary flow for the minimal surfaces in
$\R^n$ with Plateau boundary condition}, Proc. Roy. Soc. Edinburgh Sect. A 135 (2005), 
no. 3, 537-562. 

\bibitem{Chang-Liu-2004}
Chang, Kung-Ching; Liu, Jia-Quan: {\em An evolution of minimal surfaces with Plateau condition},
Calc. Var. Partial Differential Equations 19 (2004), no. 2, 117-163. 

\bibitem{Chang-Liu-2003a}
Chang, Kung-Ching; Liu, Jia-Quan: 
{\em Heat flow for the minimal surface with Plateau boundary condition},
Acta Math. Sin. (Engl. Ser.) 19 (2003), no. 1, 1-28. 

\bibitem{Chang-Liu-2003b}
Chang, Kung-Ching; Liu, Jia-Quan: 
{\em Another approach to the heat flow for Plateau problem},
J. Differential Equations 189 (2003), no. 1, 46-70.

\bibitem{Courant-1937}
Courant, Richard:
{\em Plateau's problem and Dirichlet's principle},
Ann. of Math. (2) 38 (1937), no. 3, 679-724.
 
\bibitem{Da Lio-2015}
Da Lio, Francesca: {\em Compactness and bubble analysis for $1/2$-harmonic maps},
Ann. Inst. H. Poincar\'e Anal. Non Lin\'eaire 32 (2015), no. 1, 201-224. 

\bibitem{Da Lio-Martinazzi-Riviere-2015}
Da Lio, Francesca; Martinazzi, Luca; Rivi\`ere, Tristan: 
{\em Blow-up analysis of a nonlocal Liouville-type equation}, 
Anal. PDE 8 (2015), no. 7, 1757-1805.

\bibitem{Da Lio-Pigati-2020}
Da Lio, Francesca; Pigati, Alessandro: {\em Free boundary minimal surfaces: 
a nonlocal approach}, Ann. Sc. Norm. Super. Pisa Cl. Sci. (5) 20 (2020), no. 2, 437-489. 

\bibitem{Da Lio-Riviere-2011}  
Da Lio, Francesca; Rivi\`ere, Tristan: {\em Three-term commutator estimates 
and the regularity of $1/2$-harmonic maps into spheres}, Anal. PDE 4 (2011), no. 1, 149-190.

\bibitem{Da Lio-Schikorra-2017}  
Da Lio, Francesca; Schikorra, Armin: {\em On regularity theory for $n/p$-harmonic maps 
into manifolds}, Nonlinear Anal. 165 (2017), 182-197.

\bibitem{Douglas-1931}
Douglas, Jesse: {\em Solution of the problem of Plateau},
Trans. Amer. Math. Soc. 33 (1931), no. 1, 263-321. 

\bibitem{Eells-Sampson-1964}
Eells, James, Jr.; Sampson, J. H.: {\em Harmonic mappings of Riemannian manifolds},
Amer. J. Math. 86 (1964), 109-160.

\bibitem{Freire-1995}
Freire, Alexandre: {\em Uniqueness for the harmonic map flow from surfaces to general targets},
Comment. Math. Helv. 70 (1995), no. 2, 310-338. 

\bibitem{Freire-1996}
  Freire, Alexandre: {\em Correction to: "Uniqueness for the harmonic map flow from
  surfaces to general targets''}, Comment. Math. Helv. 71 (1996), no. 2, 330-337.

\bibitem{Gehrig-2020}
Gehrig, Manuela Iris: {\em Prescribed curvature on the boundary of the disc}, 
Dissertation, ETH Zurich, 2020, https://doi.org/10.3929/ethz-b-000445412.

\bibitem{Gruter-et-al-1981}
Gr\"uter, Michael; Hildebrandt, Stefan; Nitsche, Johannes C. C.:
{\em On the boundary behavior of minimal surfaces with a free boundary which are not 
minima of the area}, Manuscripta Math. 35 (1981), no. 3, 387-410.
 
\bibitem{Hyder-et-al-2021}
Hyder, Ali; Segatti, Antonio; Sire, Yannick; Wang, Changyou:
{\em Partial regularity of the heat flow of half-harmonic maps and applications to 
harmonic maps with free boundary}, arXiv:2111.14171.

\bibitem{Imbusch-Struwe-1999}
Imbusch, Cordula; Struwe, Michael: {\em Variational principles for minimal surfaces}, 
Topics in nonlinear analysis, 477-498, Progr. Nonlinear Differential Equations Appl., 35, 
Birkh\"auser, Basel, 1999.
  
\bibitem{Jost-1981}
Jost, J\"urgen: {\em Univalency of harmonic mappings between surfaces}, 
J. Reine Angew. Math. 324 (1981), 141-153.

\bibitem{Jost-Struwe-1990}
Jost, J.; Struwe, M.: {\em Morse-Conley theory for minimal surfaces of varying topological type},
Invent. Math. 102 (1990), no. 3, 465-499. 

\bibitem{Ladyzhenskaya-1969}
Ladyzhenskaya, O. A.: {\em  The mathematical theory of viscous incompressible flow},
Second English edition, revised and enlarged Translated from the Russian by Richard
A. Silverman and John Chu, Mathematics and its Applications, Vol. 2, Gordon and Breach,
Science Publishers, New York-London-Paris, 1969.

\bibitem{Lenzmann-Schikorra-2020}
  Lenzmann, Enno; Schikorra, Armin: {\em Sharp commutator estimates via harmonic extensions},
  Nonlinear Anal. 193 (2020), 111375, 37 pp.

\bibitem{Mazowiecka-Schikorra-2018}
Mazowiecka, Katarzyna; Schikorra, Armin: {\em Fractional div-curl quantities and 
applications to nonlocal geometric equations}, J. Funct. Anal. 275 (2018), no. 1, 1-44.

\bibitem{Millot-Sire-2015}
Millot, Vincent; Sire, Yannick: {\em On a fractional Ginzburg-Landau equation and 
$1/2$-harmonic maps into spheres}, Arch. Ration. Mech. Anal. 215 (2015), no. 1, 125-210.

\bibitem{Morse-1937} 
Morse, Marston: {\em Functional topology and abstract variational theory},
Ann. of Math. (2) 38 (1937), no. 2, 386-449. 

\bibitem{Morse-Tompkins-1939}
Morse, Marston; Tompkins, C.: 
{\em The existence of minimal surfaces of general critical types},
Ann. of Math. (2) 40 (1939), no. 2, 443-472.

\bibitem{Moser-2011}
  Moser, Roger: {\em Intrinsic semiharmonic maps},
  J. Geom. Anal. 21 (2011), no. 3, 588-598. 

\bibitem{Rado-1930}
Rad\'o, Tibor: {\em On Plateau's problem}, 
Ann. of Math. (2) 31 (1930), no. 3, 457-469.

\bibitem{Riviere-1993}
  Rivi\`ere, Tristan: {\em Le flot des applications faiblement harmoniques en dimension deux},
  published in ``Applications harmoniques entre vari\'et\'es'', Th\`ese de l’universit\'e
  Paris 6, 1993.

\bibitem{Rupflin-2017}
Rupflin, Melanie: {\em  Teichm\"uller harmonic map flow from cylinders},
Math. Ann. 368 (2017), no. 3-4, 1227-1276. 
 
\bibitem{Rupflin-Schrecker-2018}
Rupflin, Melanie; Schrecker, Matthew R. I.: 
{\em Analysis of boundary bubbles for almost minimal cylinders},
Calc. Var. Partial Differential Equations 57 (2018), no. 5, Paper No. 121.
 
\bibitem{Rupflin-Topping-2019}
Rupflin, Melanie; Topping, Peter M.: 
{\em Global weak solutions of the Teichm\"uller harmonic map flow into general targets}, 
Anal. PDE 12 (2019), no. 3, 815-842.

\bibitem{Schikorra-2012}
Schikorra, Armin: {\em Regularity of $n/2$-harmonic maps into spheres},
J. Differential Equations 252 (2012), no. 2, 1862-1911.

\bibitem{Schoen-Uhlenbeck-1982}
Schoen, Richard; Uhlenbeck, Karen: {\em A regularity theory for harmonic maps},
J. Differential Geometry 17 (1982), no. 2, 307-335.
 
\bibitem{Schoen-Yau-1978}
Schoen, Richard; Yau, Shing Tung: {\em On univalent harmonic maps between surfaces},
Invent. Math. 44 (1978), no. 3, 265-278.

\bibitem{Struwe-1984}
Struwe, Michael: {\em On a critical point theory for minimal surfaces spanning a wire in $\R^n$},
J. Reine Angew. Math. 349 (1984), 1-23.

\bibitem{Struwe-1985}
Struwe, Michael: {\em On the evolution of harmonic mappings of Riemannian surfaces},
Comment. Math. Helv. 60 (1985), no. 4, 558-581.

\bibitem{Struwe-1986} 
Struwe, Michael: {\em A Morse theory for annulus-type minimal surfaces},
J. Reine Angew. Math. 368 (1986), 1-27.

\bibitem{Struwe-1988}
Struwe, Michael: {\em Plateau's problem and the calculus of variations},
Mathematical Notes, 35. Princeton University Press, Princeton, NJ, 1988. 

\bibitem{Topping-2002}
Topping, Peter: {\em Reverse bubbling and nonuniqueness in the harmonic map flow},
Int. Math. Res. Not. 2002, no. 10, 505-520.
 
\bibitem{Tromba-1985}
Tromba, Anthony Joseph: 
{\em Degree theory on oriented infinite-dimensional varieties and the Morse number
of minimal surfaces spanning a curve in $\R^n$. I, $n\ge 4$},
Trans. Amer. Math. Soc. 290 (1985), no. 1, 385-413.

\bibitem{Tromba-1984}
Tromba, Anthony Joseph: 
{\em Degree theory on oriented infinite-dimensional varieties and the Morse number of minimal surfaces spanning a curve in $\R^n$. II, $n=3$},
Manuscripta Math. 48 (1984), no. 1-3, 139-161. 

\bibitem{Wettstein-2022}
Wettstein, Jerome: 
{\em Uniqueness and Regularity of the Fractional Harmonic
Gradient Flow in $S^{n-1}$}, Nonlinear Analysis 214 (2022).

\bibitem{Wettstein-2021a}
Wettstein, Jerome: 
{\em Existence, Uniqueness and Regularity of the Fractional
Harmonic Gradient Flow in General Target Manifolds}, arxiv 2109.11458.

\bibitem{Wettstein-2021b}
Wettstein, Jerome: 
{\em Half-Harmonic Gradient Flow: Aspects of a Non-Local Geometric PDE}, arXiv:2112.08846.

\end{thebibliography}
\end{document}